\tikzset{ext/.style={circle, draw,inner sep=1pt},int/.style={circle,draw,fill,inner sep=1pt},nil/.style={inner sep=1pt}}
\tikzset{exte/.style={circle, draw,inner sep=3pt},inte/.style={circle,draw,fill,inner sep=3pt}}
\tikzset{diagram/.style={matrix of math nodes, row sep=3em, column sep=2.5em, text height=1.5ex, text depth=0.25ex}}
\tikzset{diagram2/.style={matrix of math nodes, row sep=0.5em, column sep=0.5em, text height=1.5ex, text depth=0.25ex}}
\theoremstyle{plain}
\newtheorem{introthm}{Theorem}
\newtheorem{thm}[subsection]{Theorem}
\newtheorem{prop}[subsection]{Proposition}
\newtheorem{lemm}[subsection]{Lemma}
\newtheorem{cor}[subsection]{Corollary}
\title[Rational homotopy theory of operad modules]{Rational homotopy theory of operad modules through colored operads}
\date{\today}
\author{Thomas Willwacher}
\address{Department of Mathematics\\
ETH Z\"urich\\
R\"amistrasse 101\\
8092 Z\"urich, Switzerland}
\email{thomas.willwacher@math.ethz.ch}
\DeclareMathOperator{\QQ}{\mathbb{Q}}
\DeclareMathOperator{\Mod}{\mathcal{M}\mathit{od}}  % modules
\DeclareMathOperator{\BiMod}{\mathcal{B}\mathit{i}\!\Mod}  % bimodules
\newcommand{\dgHBiModc}{\dg\!H\!\BiMod^c}  % Hopf bicomodules
\newcommand{\dgBiModc}{\dg\!\BiMod^c}
\DeclareMathOperator{\Seq}{\mathcal{S}\mathit{eq}}  % symmetric sequences
\DeclareMathOperator{\Op}{\mathcal{O}\mathit{p}}    % operads
\DeclareMathOperator{\f}{\mathit{f}}        % filtered objects
\DeclareMathOperator{\dg}{\mathit{dg}}      % dg-objects
\DeclareMathOperator{\dgca}{\mathit{dgca}}
\DeclareMathOperator{\Map}{\mathtt{Map}}
\DeclareMathOperator{\Mor}{\mathtt{Mor}}
\DeclareMathOperator{\Hom}{\mathtt{Hom}}
\DeclareMathOperator{\eq}{\mathrm{eq}}
\DeclareMathOperator{\FreeOp}{\mathbb{F}}     % free operads
\DeclareMathOperator{\DGOmega}{\mathtt{\Omega}}
\DeclareMathOperator{\DGB}{\mathtt{B}}
\DeclareMathOperator{\DGG}{\mathtt{G}}
\DeclareMathOperator{\DGL}{\mathtt{L}}
\newcommand{\CdgSeq}{\colC\!\dg\!\Seq}
\DeclareMathAlphabet{\mathsfit}{OT1}{cmss}{m}{sl}
\DeclareMathOperator{\AOp}{\mathsfit{A}}
\DeclareMathOperator{\BOp}{\mathsfit{B}}
\DeclareMathOperator{\COp}{\mathsfit{C}}
\DeclareMathOperator{\DOp}{\mathsfit{D}}
\DeclareMathOperator{\MOp}{\mathsfit{M}}
\DeclareMathOperator{\NOp}{\mathsfit{N}}
\DeclareMathOperator{\POp}{\mathsfit{P}}
\DeclareMathOperator{\QOp}{\mathsfit{Q}}
\DeclareMathOperator{\SOp}{\mathsfit{S}}
\newcommand{\bbS}{\mathbb S}
\renewcommand{\Bar}{\DGB}
\newcommand{\dgOpc}{\dg\!\Op^c}
\newcommand{\CdgOpc}{\colC\!\dgOpc}
\newcommand{\dgHOpc}{\dg\!\mathit{H}\!\Op^c}
\newcommand{\CdgHOpc}{\colC\!\dgHOpc}
\newcommand{\G}{\DGG}
\newcommand{\La}{\Lambda}
\newcommand{\Lai}{\Lambda\!}
\newcommand{\sSetOp}{\sSet\!\Op}
\newcommand{\CsSetOp}{\colC\sSetOp}
\newcommand{\Trip}{\mathcal{T}\mathit{rip}}
\newcommand{\sSetTrip}{\sSet\Trip}
\newcommand{\dgHTripc}{\dg\!H\Trip^c}
\newcommand{\TOp}{\mathsf T}
\newcommand{\sSetSeq}{\sSet\!\Seq}
\newcommand{\CsSetSeq}{\colC\sSetSeq}
\newcommand{\sSet}{\mathit{s}\mathcal{S}\mathit{et}}
\newcommand{\dgCom}{\mathit{dg}\mathcal{C}\mathit{om}}
\DeclareMathOperator{\Ind}{\mathtt{Ind}}  
\newcommand{\dgVect}{\dg\!\mathcal{V}\mathit{ect}}
\newcommand{\catC}{{\mathcal C}}
\newcommand{\catD}{{\mathcal D}}
\newcommand{\catA}{{\mathcal A}}
\newcommand{\catB}{{\mathcal B}}
\renewcommand{\leftrightarrows}{\rightleftarrows}
\newcommand{\colC}{{\mathbf C}}
\newcommand{\IBiMod}{\mathcal I\!\BiMod}
\newcommand{\AssM}{AssM}
\newcommand{\AssMc}{AssM^c}
\newcommand{\dgHIBiModc}{\dg\!H\IBiMod^c}
\begin{document}

\begin{abstract}
	We extend the rational homotopy theory of operads developed by B. Fresse to several types of modules over operads.
\end{abstract}

\maketitle

\section{Introduction}
The rational homotopy theory of operads has been developed by B. Fresse \cite{OperadHomotopyBook,ExtendedRHT}.
Concretely, he constructed model category structures on the category $\sSetOp$ of simplicial operads and the category $\dgHOpc$ of dg Hopf cooperads, i.e., cooperads in dg commutative algebras. These categories then fit into a Quillen adjunction
\begin{equation}\label{equ:RHT adjunction}
\G : \dgHOpc \leftrightarrows \sSetOp^{op}  : \DGOmega_\sharp
\end{equation}
extending the standard Quillen adjunction between simplicial sets and dg commutative algebras in rational homotopy theory.

The main goal of this note is to extend the above construction to operadic (bi)modules. In particular, we construct model category structures on various categories of dg Hopf cooperadic comodules, and show that they fit into Quillen adjunctions similar to \eqref{equ:RHT adjunction} above.

We do this by first noting that the adjunction \eqref{equ:RHT adjunction} naturally extends to colored operads, with a finite set of colors $\colC$, to yield a Quillen adjunction 
\begin{equation}\label{equ:RHT adjunction col}
    \G : \CdgHOpc \leftrightarrows \CsSetOp^{op}  : \DGOmega_\sharp.
\end{equation}

Let $\POp$, $\QOp$ be operads, and let $\MOp$ be an operadic $\POp$-$\QOp$-bimodule. 
Then the triple $(\POp, \MOp, \QOp)$ naturally generates a two-colored operad with $\POp$ the operations of input and output color I, $\QOp$ the operations of input and output color II, and $\MOp$ the operations of input color II and output color I.
By suitable restriction of the categories in \eqref{equ:RHT adjunction col} we are hence able to obtain a similar operadic adjunction for the category of operadic bimodules. We summarize our main results in the following theorem.

\begin{introthm}\label{thm:main bi}
\begin{itemize}
\item (Berger-Moerdijk \cite{BMColored}) For $\POp$, $\QOp$ simplicial operads there is a cofibrantly generated model structure on the category $\BiMod_{\POp, \QOp}$ of operadic $\POp$-$\QOp$-bimodules such that the weak equivalences (resp. fibrations) are the arity-wise weak equivalences (resp. fibrations) of simplicial sets.
\item For $\COp$, $\DOp$ dg Hopf cooperads there is a cofibrantly generated model structure on the category $\dgHBiModc_{\COp, \DOp}$ of dg Hopf cooperadic $\COp$-$\DOp$-bicomodules such that the weak equivalences are the arity-wise quasi-isomorphisms.
\item There is a Quillen adjunction 
\[
	\G_\bullet : \dgHBiModc_{\DGOmega_\sharp(\POp), \DGOmega_\sharp(\QOp)} \leftrightarrows \BiMod_{\POp, \QOp}^{op}  : \DGOmega_\sharp.
\]
with $\G_\bullet$ the arity-wise application of Quillen's realization functor, see \eqref{equ:G def} below.
\item Let $\MOp$ be a cofibrant $\POp$-$\QOp$-bimodule.
If $\POp(1)$ and $\QOp(1)$ are connected and all simplicial sets $\POp(r)$, $\QOp(r)$ and $\MOp(r)$ have finite dimensional rational cohomology in every degree, then there is a natural comparison weak equivalence 
\[
	\DGOmega_\sharp(\MOp)(r) \xrightarrow{\sim}
	\DGOmega(\MOp(r))
\]
to the standard (Sullivan) differential forms functor $\DGOmega(-)$ of rational homotopy theory.
\end{itemize}
\end{introthm}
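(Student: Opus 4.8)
The plan is to deduce the statement from the corresponding comparison theorem for (two-colored) operads — which the paper establishes by transporting B.~Fresse's argument for operads to the colored framework of \eqref{equ:RHT adjunction col} — and then to read off the module case from one mixed-color component.

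First I would reduce to the case where $\POp$ and $\QOp$ are cofibrant simplicial operads. Base change along cofibrant replacements $\POp^c\xrightarrow{\sim}\POp$, $\QOp^c\xrightarrow{\sim}\QOp$ is a Quillen equivalence on the categories of bimodules (and, on the dg Hopf side, of bicomodules), under which $\MOp$ corresponds to a cofibrant bimodule with arity-wise weakly equivalent underlying spaces, hence unchanged finiteness of rational cohomology, and under which $\DGOmega_\sharp$ and $\DGOmega$ change only up to quasi-isomorphism; so it does no harm to assume $\POp,\QOp$ cofibrant. Then the two-colored operad $\TOp=\TOp(\POp,\MOp,\QOp)$ generated by the triple, with $\colC=\{\mathrm I,\mathrm{II}\}$, is a cofibrant object of $\CsSetOp$, and, by the construction of $\DGOmega_\sharp$ on bimodules as a restriction of $\DGOmega_\sharp$ on colored operads, $\DGOmega_\sharp(\TOp)$ restricts to $\DGOmega_\sharp(\POp)$ on the all-$\mathrm I$ components, to $\DGOmega_\sharp(\QOp)$ on the all-$\mathrm{II}$ components, and to $\DGOmega_\sharp(\MOp)(r)$ on the component with output color $\mathrm I$ and $r$ inputs of color $\mathrm{II}$; the same holds for the arity-wise Sullivan forms and for the natural comparison morphism (the one produced by the defining resolution of $\DGOmega_\sharp$). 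It therefore suffices to show: for a cofibrant $\colC$-operad all of whose component spaces have finite-dimensional rational cohomology in every degree and whose unary same-color components are connected, the comparison map $\DGOmega_\sharp(\TOp)(c_1,\dots,c_r;c)\to\DGOmega\bigl(\TOp(c_1,\dots,c_r;c)\bigr)$ is a quasi-isomorphism for every color-arity.

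This colored comparison statement I would prove exactly as Fresse proves it for operads, by induction over a cellular presentation of the cofibrant $\colC$-operad $\TOp$: it is a retract of a $\colC$-operad built from the initial one by a transfinite composition of pushouts of free $\colC$-operads along the generating cofibrations $\partial\Delta^n\hookrightarrow\Delta^n$, attached in the various color-arities. Since $\DGOmega_\sharp$ carries colimits of $\colC$-operads to limits of $\colC$-Hopf cooperads and the arity-wise Sullivan forms carry colimits of simplicial sets to limits of commutative dg algebras, each stage of the filtration produces a comparison morphism between pullback squares, and it remains only to handle free $\colC$-operads and to verify that the pullbacks occurring are homotopy pullbacks. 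On a free $\colC$-operad generated by a collection of finite type, each component space is a coproduct, indexed by color-decorated trees, of \emph{finite} products of the spaces $\POp(\cdot)$, $\QOp(\cdot)$ and the generating cells; applying $\DGOmega$ turns this into a product of $\DGOmega$ of finite products, the Künneth maps identify each such factor up to quasi-isomorphism with the corresponding finite tensor product, and these tensor products are precisely what $\DGOmega_\sharp$ of the free $\colC$-operad assembles; since rational cohomology commutes with products of complexes and every factor has finite-type cohomology, the resulting (in general infinite) product of quasi-isomorphisms is again a quasi-isomorphism. For the inductive step, pushouts of simplicial sets along monomorphisms are homotopy pushouts, $\DGOmega$ takes homotopy pushouts of finite-type nilpotent spaces to homotopy pullbacks of commutative dg algebras (the Eilenberg--Moore / Sullivan theorem — here the connectedness of the unary same-color components is what keeps the intervening spaces nilpotent), and $\DGOmega_\sharp$, being right Quillen, preserves homotopy pullbacks; the transfinite composition step is settled by a $\lim^1$ argument, once more using the finiteness hypothesis. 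A two-out-of-three argument along the pullback squares then closes the induction.

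The step I expect to be the main obstacle is the base case, i.e., the behaviour of $\DGOmega$ on free $\colC$-operads: one has to commute $\DGOmega$ past the (possibly infinite) coproduct of decorated trees — which is harmless, since $\DGOmega$ sends coproducts to products — and then, factor by factor, past the finite products by Künneth, and finally check that the resulting infinite product of quasi-isomorphisms is still a quasi-isomorphism in each fixed arity and degree. This is exactly where the hypothesis that all of $\POp(r)$, $\QOp(r)$ and $\MOp(r)$ have finite-dimensional rational cohomology in every degree is used, together with the connectivity of the unary parts (which also controls the combinatorics of how cells may be stacked in trees, and reappears in the Eilenberg--Moore step); the rest is formal manipulation of the model structures already in place.
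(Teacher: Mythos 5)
There is a genuine gap: your proposal engages only with the last of the four assertions. The theorem also claims a cofibrantly generated model structure on $\dgHBiModc_{\COp,\DOp}$ and a Quillen adjunction between $\BiMod_{\POp,\QOp}$ and $(\dgHBiModc_{\COp,\DOp})^{op}$, and these are where the bulk of the paper's work lies. The model structure on Hopf bicomodules is obtained by a chain of transfers (colored Hopf cooperads $\to$ Hopf triples $\to$ slice category $\to$ bicomodules), and verifying Kan's transfer conditions is not formal: the key input is that (acyclic) cofibrations of dg Hopf cooperads are arity-wise (acyclic) cofibrations of dg commutative algebras (Proposition \ref{prop:cofib aritywise cofib}), so that relative cell complexes for the transferred generating acyclic cofibrations are arity-wise quasi-isomorphisms (Propositions \ref{prop:Htrip model} and \ref{prop:HBiModc model str}). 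The adjunction likewise does not come for free: the left adjoint $\DGOmega_\sharp$ on bimodules must be produced by restricting the colored-operad adjunction through the (co)reflective subcategory inclusions, via the adjoint lifting argument of Lemma \ref{lem:restr adj} and Propositions \ref{prop:rht triples} and \ref{prop:bimod rht adj}. Your closing remark that ``the rest is formal manipulation of the model structures already in place'' presupposes exactly what has to be proved.

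For the fourth bullet your route coincides with the paper's: encode $(\POp,\MOp,\QOp)$ as a two-colored operad and invoke the colored version of Fresse's comparison theorem (Theorem \ref{thm:comparison colored}); your sketch of that theorem's proof is a plausible account of the cellular induction that the paper itself outsources to \cite{ExtendedRHT}. Two caveats, though. First, the preliminary reduction to cofibrant $\POp$, $\QOp$ is more delicate than you state: the induction/restriction adjunction on bimodules is a Quillen equivalence only under $\Sigma$-cofibrancy hypotheses (Theorem \ref{thm:ind res bimod}), and since $\DGOmega_\sharp$ on bimodules is defined relative to the ambient operads, you must also compare the functors $\DGOmega_\sharp$ taken over $(\POp,\QOp)$ and over their replacements; neither point is automatic. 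Second, to apply Theorem \ref{thm:comparison colored} you need the two-colored operad $\iota(\POp,\MOp,\QOp)$ to be cofibrant in $\CsSetOp$ and to satisfy the vanishing condition $\TOp(c;d)=\emptyset$ for $c\not\leq d$; the role of the connectivity and vanishing hypotheses in the colored setting is to ensure that only finitely many trees contribute to each arity and cohomological degree of the free colored operad (so that the K\"unneth argument applies), not to guarantee nilpotence of intervening spaces as you suggest.
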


We also show an extension valid for unital operads, that is, operads that satisfy $\POp(0)=\QOp(0)=*$, see section \ref{sec:La operads} below.
By restricting to the case $\POp=1$ (resp. $\QOp=1$) we also obtain a similar result for operadic right (resp. left) modules. 
We also show a variant of Theorem \ref{thm:main bi} for infinitesimal bimodules, and generalizations thereof, see section \ref{sec:mixedmod} below.
Our methods are flexible and generic, and can be applied to other types of operadic modules, that can be encoded into colored operads.

Mind however, that we are generally restricted to situations in which the spaces of nullary operations are either empty or consist of one point. In the applications we mainly target, the operads and modules arise as configuration spaces of points on manifolds, and this restriction is automatically satisfied.
However, we do not cover, for example, the classical situation of algebras over an operad, which can be considered as operadic left modules concentrated in arity zero.

\subsection*{Sketch of the construction}
Note that a $\POp$-$\QOp$-bimodule $\MOp$ gives rise to an object in each of the following four categories.
\begin{itemize}
\item The category of two-colored operads, with the bimodule encoded as a colored operad as described above.
\item The category of triples $(\POp, \MOp, \QOp)$ consisting of two operads and a bimodule.
\item If we want fo fix the operads $\POp$ and $\QOp$, then we may consider the under-category whose objects are arrows of triples $(\POp, \emptyset, \QOp)\to (\POp', \MOp, \QOp')$, and consider our bimodule as an object in this category.
\item Finally, we may remove the operads from the data and just consider the bimodule as an object in the category of $\POp$-$\QOp$-bimodules.
\end{itemize}

In this paper we start from the category of colored operads, for which model structures and a rational homotopy have been (essentially) constructed by B. Fresse. Using natural categorial constructions we then prolong this theory to each of the other categories above. This serves a dual purpose: First, it yields a simplified construction of the rational homotopy theory of operadic bimodules, in that we do not have to repeat some amount of elaborate technical verifications by Fresse, and instead can just cite some general (model) categorial results. 
The second purpose is that this line of attack also allows us to compare the above four ways of encoding an operadic bimodule, by studying the natural inclusion functors between these categories.

\subsection*{Structure of the paper}
In section \ref{sec:preliminaries} we state our conventions and recall some preliminaries and standard model categorial constructions we use in later sections.

Section \ref{sec:operad rht} contains a brief recollection of the rational homotopy theory for operads developed by the first author. We note in passing that it can be extended (virtually) without changes to colored operads on a finite set of colors.

This is then specialized in section \ref{sec:triples} to obtain a rational homotopy theory of triples $(\POp, \MOp,\QOp)$ consisting of two operads and a bimodule.
Finally section \ref{sec:bimodules} contains the further restriction to bimodules (fixing the operads), and a proof of Theorem \ref{thm:main bi}.
Section \ref{sec:mixedmod} contains extensions to other types of modules, including infinitesimal bimodules.

\subsection*{Acknowledgements}
The author is greatly indebted to Benoit Fresse, for laying the foundations for this work, and for many helpful and clarifying discussions.

The work was furthermore supported by the NCCR Swissmap, funded by the Swiss National Science Foundation.

\section{Preliminaries}\label{sec:preliminaries}

\subsection{Notation }\label{sec:notation}
In the following we generally use the conventions and notation of B. Fresse's book ``Homotopy of operads and Grothendieck--Teichm\"uller groups'' \cite{OperadHomotopyBook}, with slight adjustments.

We denote by $\sSet$ the category of simplicial sets. We equip the category $\sSet$ with a model structure such that the weak equivalences are weak homotopy equivalences, the fibrations are Kan fibrations, and the cofibrations are degree-wise injective maps, cf. \cite[Theorem 1.3.12]{OperadHomotopyBook}.

The category $\dgVect$ of non-negatively graded cochain complexes is equipped with a model structure such that the weak equivalences are quasi-isomorphisms, the fibrations are the degreewise surjective maps, and the cofibrations are the maps injective in all positive degrees. We emphasize that in contrast to the terminology of \cite{OperadHomotopyBook} we do not call dg vector spaces dg modules, but rather reserve the word module to refer to operadic modules only, in order to avoid confusion.

A symmetric sequence $\MOp$ in a category $\catC$ is a collection $\MOp(r)$ of objects in $\catC$, equipped with right actions of the symmetric group $\bbS_r$, for each $r= 1,2,\dots$. In particular, note that our symmetric sequences usually have no terms in arity $r=0$. (We will briefly deviate from this convention in section \ref{sec:La operads}, though.)
Assuming that $\catC$ is symmetric monoidal and has finite limits, the category $\catC\Seq$ of symmetric sequences in $\catC$ is equipped with the "plethysm" monoidal product $\circ$.
Concretely for $\MOp,\NOp\in \catC\Seq$ we have
\[
(\MOp\circ\NOp)(r) 
=
\coprod_{k}
\MOp(k) \otimes_{\bbS_r}
\left(\coprod_{r_1+\cdots+r_k=r}
\Ind_{\bbS_{r_1} \times \cdots \times \bbS_{r_k}}^{\bbS_r}
(\NOp(r_1) \otimes \cdots \otimes \NOp(r_k))
\right).
\]
An operad $\POp$ in $\catC$ can then be defined as a monoid in $(\catC\Seq,\circ)$.
An operadic left (resp. right) $\POp$ module is then a left or right module for the monoid $\POp$ in $\catC\Seq$.
Similarly, for $\QOp$ another operad, an operadic $\POp$ -$\QOp$-bimodule is a bimodule for the monoids $\POp$ and $\QOp$.

For later use we shall also define a restricted plethysm product $\circ'$ on the category $\catC\Seq$, such that
\[
(\MOp\circ'\NOp)(r) 
=
\coprod_{k+l=r+1}
\Ind_{\bbS_{k-1} \times \bbS_{l}}^{\bbS_r}\left(
\MOp(k) \otimes \NOp(l)
\right).
\]
This is the infinitesimal version of $\circ$ in the sense that formally all but one of the $\NOp$ have been replaced by the monoidal unit object.

An infinitesimal $\POp$-bimodule is a right $\POp$-module $\MOp$, together with compatible partial left actions, i.e., a morphism of the form
\[
\POp \circ' (\MOp \circ\POp) \to \MOp,
\] 
satisfying natural compatibility relations.
We refer to \cite[section 3]{AroneTurchin} for more details.

\subsection{Colored symmetric sequences and operads}
We shall heavily use colored operads, on a finite set of colors $\colC$.
A colored symmetric sequence $\MOp$ in a category $\catD$ is a collection of objects 
\[
\MOp(\underline r; c) \in \catD
\]
with an action of the symmetric group $\bbS_{\underline r}=\prod_{d\in C} \bbS_{r_d}$, where $c\in \colC$ and 
\begin{gather*}
    \underline r: \colC \to \mathbb Z_{\geq 0} \\
    d \mapsto r_d
\end{gather*}
is a function. 
We consider $\MOp(\underline r; c)$ as a space of operations with $r_d$ inputs of color $d$ ($d\in \colC$) and output of color $c$.
We usually consider only colored symmetric sequences with no nullary operations, i.e., we require that $\sum_dr_d>0$.
In case there is a natural ordering on the set of colors $\colC=\{c_1,\dots,c_n\}$ we consider $\underline r=(r_{c_1},\dots,r_{c_n})$ as a tuple and also use the notation 
\[
\MOp(r_{c_1},\dots,r_{c_n}; c).
\]
For the unary operations we also use the notation 
\[
\MOp(d; c)=\MOp(0,\dots,0,1,0,\dots,0; c).
\]
Assuming that $\catC$ is symmetric monoidal,
the category $\colC\catC\Seq$ of colored symmetric sequences has a natural monoidal structure $\circ$ such that 
\[
(\MOp\circ \NOp)(\underline r;c)
=
\bigoplus_{\underline s}
\MOp(\underline s;c)
\otimes_{\bbS_{\underline s}}
\left(
\bigoplus_{
	\underline t^{d,i} (d\in \colC, i=1,\dots,s_c) 
	\atop \sum \underline t^{d,i} = \underline r
}
\Ind^{\bbS_{\underline r}}_{\prod\bbS_{\underline t^{d,i}}}
\bigotimes_{d,i} \NOp(\underline t^{d,i};d)
\right).
\] 
A colored operad is a monoid in $\colC\catC\Seq$ with respect to this monoidal structure.

\subsection{Model categorial transfer}
We assume that all considered categories are complete and cocomplete without further mention.
Let $\catC$ be a cofibrantly generated model category, with generating cofibrations $I$ and generating acyclic cofibrations $J$.
Let 
\[
F\colon \catC \leftrightarrows \catD \colon G
\]
be an adjunction. 
Then the transfered model category structure on $\catD$, if it exists, is defined by the following distinguished classes of morphisms.
\begin{itemize}
\item A morphism $f$ in $\catD$ is a weak equivalence (resp. fibration) if $G(f)$ is a weak equivalence (resp. fibration).
\item The cofibrations in $\catD$ are those morphisms that satisfy the left-lifting property with respect to all fibrations.
\item The generating cofibrations (resp. the generating acyclic cofibrations) in $\catD$ are $F(I)$ (resp. $F(J)$).
\end{itemize}

% \begin{thm}[{Kan, see \cite[Theorem 4.3.3]{OperadHomotopyBook}}]
% 	\label{thm:transfer Kan}
% The above distinguished classes of morphisms define a cofibrantly generated model category structure on $\catD$ if the following conditions hold:
% \begin{enumerate}
% \item The domains $F(C)$ of the morphisms $F(i) \in F(I)$ (respectively, $F(j) \in F(J)$) are small with respect to the relative $F(I)$-cell complexes (respectively, $F(J)$-cell complexes) in the category $\catD$.
% \item The image of any relative $F(J)$-cell complex under the right adjoint functor $G$ forms a weak-equivalence in $\catC$.
% \end{enumerate}

% \end{thm}

\begin{thm}[{Kan, see \cite[Theorem 4.3.3]{OperadHomotopyBook}, \cite[Theorem 7.4.4]{HeutsMoerdijk}}]
	\label{thm:transfer Kan}
The above distinguished classes of morphisms define a cofibrantly generated model category structure on $\catD$ if the following conditions hold:
\begin{enumerate}
    \item The domains of the morphisms $F(i) \in F(I)$ (respectively, $F(j) \in F(J)$) are small with respect to the relative $F(I)$-cell complexes (respectively, $F(J)$-cell complexes) in the category $\catD$.
    This is true in particular if the right-adjoint functor $G$ preserves filtered colimits.
    \item The image of any relative $F(J)$-cell complex under the right adjoint functor $G$ forms a weak-equivalence in $\catC$. Explicitly, this means that:
    \begin{enumerate}
    \item Any pushout of the image $F(j)$ of any generating acyclic cofibration $j\in J$ is a weak equivalence in $\catD$.
    \item Any transfinite composition of morphisms as in (a) is a weak equivalence in $\catD$.
\end{enumerate}
\end{enumerate}

\end{thm}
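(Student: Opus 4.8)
The plan is to check the model-category axioms for $\catD$ directly, isolating condition (2) as the sole non-formal ingredient. The category $\catD$ is bicomplete by assumption. The class $W$ of maps $f$ with $G(f)$ a weak equivalence satisfies two-out-of-three and is closed under retracts because $G$ is a functor and these properties hold in $\catC$; the class of fibrations (maps $p$ with $G(p)$ a fibration) is closed under retracts as a right-lifting class, and the class of cofibrations --- the maps with the left lifting property against the trivial fibrations --- is closed under retracts, pushouts and transfinite composition as a left-lifting class. So what remains is to produce the two functorial factorizations and to verify the two lifting axioms.

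The adjunction $(F,G)$ does the translation. A lifting problem of a map $p$ of $\catD$ against $F(i)$, for $i \in I$ or $i \in J$, transposes to one of $G(p)$ against $i$; hence $p$ is $F(I)$-injective precisely when $G(p)$ is a trivial fibration in $\catC$, and $p$ is $F(J)$-injective precisely when $G(p)$ is a fibration in $\catC$, that is, precisely when $p$ is a fibration of $\catD$. In particular every $F(I)$-injective map lies in $W$ (its $G$-image being a weak equivalence) and is $F(J)$-injective, and conversely every map lying in $W$ and $F(J)$-injective is $F(I)$-injective; so the $F(I)$-injective maps are exactly the trivial fibrations of $\catD$. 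Dually, each $j \in J$ is an $I$-cofibration, hence lifts against every $I$-injective map of $\catC$; transposing, $F(j)$ lifts against every $F(I)$-injective map, i.e.\ against every trivial fibration, so $F(j)$ is a cofibration of $\catD$, and therefore so is every $F(J)$-cell complex.

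Condition (1) now lets the small object argument run for the sets $F(I)$ and $F(J)$; the parenthetical sufficient condition holds because $\Hom_{\catD}(F(A),-) \cong \Hom_{\catC}(A, G(-))$ sends the filtered colimits that $G$ preserves to filtered colimits, so the domains of the maps in $F(I)$ and $F(J)$ remain small. This produces, for each map, a factorization into an $F(I)$-cell complex followed by an $F(I)$-injective map --- a cofibration followed by a trivial fibration --- and a factorization into an $F(J)$-cell complex followed by an $F(J)$-injective map. In the latter the first factor lies in $W$ by condition (2) and is a cofibration by the previous paragraph, hence is a trivial cofibration, while the second factor is a fibration; this is the second factorization axiom. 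The lifting axiom for a cofibration against a trivial fibration holds by the definition of cofibration. For a trivial cofibration $u$ against a fibration $q$: factor $u = q'\circ v$ with $v$ an $F(J)$-cell complex and $q'$ an $F(J)$-injective map; then $v \in W$ by condition (2), so $q' \in W$ by two-out-of-three, so $q'$ is a trivial fibration; since $u$ is a cofibration it lifts against $q'$, exhibiting $u$ as a retract of $v$, which, being an $F(J)$-cell complex, lifts against $q$, whence so does $u$. This yields the model structure, and its cofibrations, fibrations, weak equivalences and generating (acyclic) cofibrations are by construction the classes listed above.

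I expect the one genuinely non-formal step to be condition (2), the assertion that every $F(J)$-cell complex lies in $W$: its two clauses --- a single pushout of some $F(j)$, and transfinite composites of such --- cannot be read off from the corresponding statements in $\catC$, because the right adjoint $G$ does not preserve pushouts or general colimits. In the applications below this is exactly where work is required, and it will be handled by writing down an explicit model of the relevant pushout $X \amalg_{F(A)} F(B)$, filtering it, and analyzing the associated graded directly; everything else above is formal manipulation with the adjunction together with the small object argument.
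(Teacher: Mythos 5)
Your proof is correct and follows the standard small-object-argument/retract-argument route for the transfer theorem; the paper offers no proof of its own, deferring to Fresse and to Heuts--Moerdijk, whose proofs proceed along exactly the lines you give (identify $F(I)$-injectives with acyclic fibrations and $F(J)$-injectives with fibrations by transposition, run the small object argument under condition (1), and use condition (2) plus the retract argument for the remaining lifting and factorization axioms). One incidental point: you rightly take cofibrations to be the maps with the left lifting property against the \emph{acyclic} fibrations, silently correcting what is evidently a typo in the paper's description of the transferred classes.
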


We will use the following simple criterion to check condition (2) of the Theorem.

\begin{lemm}\label{lem:cell we criterion}
Consider a commutative diagram\footnote{This means that the diagrams of left and right adjoints each commute.} of adjunctions (with $L$, $L'$ and $\iota$ the left adjoints)
\[
\begin{tikzcd}[column sep=2cm, row sep=1.5cm]
	\catC 
	\ar[shift left]{r}{L} 
	\ar[shift left]{dr}{L'} 
	&
	 \ar[shift left]{l}{R} \catD 
	 \ar[shift left]{d}{\iota}
	 \\
	& \catA \ar[shift left]{u}{\pi}
	\ar[shift left]{ul}{R'} 
\end{tikzcd}.
\]
Suppose that:
\begin{itemize}
	\item $\COp$ is a cofibrantly generated model category with generating acyclic cofibrations $J$, $\AOp$ is a model category and $(L', R')$ is a Quillen adjunction.
	\item $\pi\iota= \mathit{id}$.
	\item $R'$ preserves weak equivalences.
\end{itemize}
Then condition (2) of Theorem \ref{thm:transfer Kan} holds for the adjunction $(L,R)$.
\end{lemm}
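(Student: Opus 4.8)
The plan is to run a direct diagram chase, exploiting that the left adjoint $\iota$ commutes with all the colimits used to build relative $L(J)$-cell complexes, together with the Quillen property of $(L',R')$ and the homotopical invariance of $R'$. First I would extract what the commutativity of the square of adjunctions gives us: identifications of functors $\iota L = L'$ and $R\pi = R'$. Combined with the hypothesis $\pi\iota = \id$ this yields, for every morphism $f$ of $\catD$,
\[
R(f) = R\pi\iota(f) = R'\bigl(\iota(f)\bigr).
\]
Since in the transfer of Theorem~\ref{thm:transfer Kan} (applied to the adjunction $(L,R)$) a morphism $f$ of $\catD$ is by definition a weak equivalence exactly when $R(f)$ is a weak equivalence in $\catC$, it suffices to show that $R'(\iota(f))$ is a weak equivalence in $\catC$ whenever $f$ is a pushout of some $L(j)$, $j\in J$ (this is condition (2)(a)), and whenever $f$ is a transfinite composition of such morphisms (condition (2)(b)).

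For condition (2)(a) I would start from a pushout square in $\catD$ whose left vertical map is $L(j)$ with $j\in J$, and apply $\iota$. As a left adjoint $\iota$ preserves pushouts, so the result is a pushout square in $\catA$ whose left vertical map is $\iota L(j) = L'(j)$. Since $(L',R')$ is a Quillen adjunction and $j$ is an acyclic cofibration of $\catC$, the map $L'(j)$ is an acyclic cofibration of $\catA$; acyclic cofibrations are stable under cobase change in any model category, so the pushout $\iota(f)$ is an acyclic cofibration, in particular a weak equivalence, in $\catA$. Because $R'$ preserves weak equivalences, $R'(\iota(f)) = R(f)$ is a weak equivalence in $\catC$, which is exactly what (2)(a) demands.

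For condition (2)(b) I would write the transfinite composition as the canonical map $X_0 \to \operatorname{colim}_{\alpha<\lambda} X_\alpha$ of a $\lambda$-sequence $(X_\alpha)_{\alpha<\lambda}$ in $\catD$ each of whose successor maps $X_\alpha \to X_{\alpha+1}$ is a pushout of an $L(j)$, and again apply $\iota$. Since $\iota$ preserves all colimits, $(\iota X_\alpha)_{\alpha<\lambda}$ is again a $\lambda$-sequence in $\catA$ — at each limit stage the canonical map $\operatorname{colim}_{\beta<\alpha}\iota X_\beta \to \iota X_\alpha$ is an isomorphism — its colimit is $\iota(\operatorname{colim}_\alpha X_\alpha)$, and each successor map $\iota X_\alpha \to \iota X_{\alpha+1}$ is a pushout of $L'(j)$, hence an acyclic cofibration of $\catA$ by the previous paragraph. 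Acyclic cofibrations are closed under transfinite composition in any model category, so $\iota X_0 \to \iota(\operatorname{colim}_\alpha X_\alpha)$ is an acyclic cofibration, in particular a weak equivalence in $\catA$; applying $R'$ and using $R'\iota = R\pi\iota = R$ shows that $R$ of the transfinite composition is a weak equivalence in $\catC$. This establishes (2)(b), and hence all of condition (2) for $(L,R)$.

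I do not expect a genuine obstacle: the argument is essentially formal. The two points I would be careful about are, first, that $\iota$ being a left adjoint really does commute both with the pushouts and with the (possibly transfinite) sequential colimits that define relative $L(J)$-cell complexes, including the colimits at limit stages — this is what transports the cell-complex structure to $\catA$; and second, that the hypothesis is that $R'$ preserves \emph{all} weak equivalences, not merely those between fibrant objects. The latter is what makes the chase go through, since the objects $\iota(f)$, $\iota X_0$, $\iota(\operatorname{colim}_\alpha X_\alpha)$ produced along the way need not be fibrant, so one cannot replace the homotopicality hypothesis on $R'$ by an appeal to Ken Brown's lemma for the right Quillen functor $R'$.
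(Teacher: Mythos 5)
Your argument is correct and is essentially the paper's own proof: both transport the relative $L(J)$-cell complex along the colimit-preserving left adjoint $\iota$ to a relative $L'(J)$-cell complex, use that $L'$ is left Quillen to conclude $\iota(f)$ is an acyclic cofibration, and then apply the hypotheses $R'\iota = R\pi\iota = R$ and that $R'$ preserves weak equivalences. The only difference is that you spell out the pushout and transfinite-composition closure steps separately, where the paper compresses them into the single assertion that a relative cell complex on acyclic cofibrations is a weak equivalence.
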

\begin{proof}
Let $f$ be a relative $L(J)$-cell complex in $\catD$. We have to check that $R(f)$ is a weak equivalence in $\catC$.
Note that since $\iota$ preserves colimits by adjunction we have that $\iota(f)$ is a relative $\iota(L(J))$-cell complex. But by commutativity of the diagram of left adjoints $\iota(L(J))=L'(J)$. But $L'$ is left Quillen and hence $L'(J)$ consists of acyclic cofibrations. Hence $\iota(f)$ is a weak equivalence. Hence $R'(\iota(f))$ is a weak equivalence by the last assumption. But by commutativity of the diagram of right adjoints
\[
	R'(\iota(f)) = R(\pi(\iota(f)))=R(f),
\]
so that $R(f)$ is a weak equivalence as was to be shown.
\end{proof}

\begin{lemm}\label{lem:cell we criterion 2}
	Let $\COp$ be a cofibrantly generated model category with generating acyclic cofibrations $J$ and $\AOp$ another model category, that fit into a diagram of functors.
	\[
	\begin{tikzcd}[row sep=1.5cm]
		\catC 
		\ar[shift left]{rr}{L} 
		\ar{dr}{\alpha} 
		& &
		 \ar[shift left]{ll}{R} \catD 
		 \ar{dl}{\beta}
		 \\
		& \catA &
	\end{tikzcd}.
	\]
	Suppose that:
	\begin{itemize}
		\item $(L,R)$ is an adjunction.
		\item We have $\alpha=\beta\circ L$.
		\item $\alpha$ preserves acyclic cofibrations and $\beta$ preserves colimits. 
		\item For $f$ a morphism in $\catD$ we have that $R(f)$ is a weak equivalence iff $\beta(f)$ is a weak equivalence. For example, this holds if $\beta=\alpha\circ R$ and $\beta$ creates weak equivalences.
	\end{itemize}
	Then condition (2) of Theorem \ref{thm:transfer Kan} holds for the adjunction $(L,R)$.
	\end{lemm}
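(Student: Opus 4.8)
The plan is to imitate the proof of Lemma~\ref{lem:cell we criterion}, replacing the two auxiliary adjunctions there by the plain functors $\alpha$ and $\beta$, and using the explicit comparison hypothesis between $R$ and $\beta$ in place of the commuting-adjunctions bookkeeping. Recall that condition (2) of Theorem~\ref{thm:transfer Kan} amounts to the single assertion that $R(f)$ is a weak equivalence in $\catC$ whenever $f$ is a relative $L(J)$-cell complex in $\catD$; this one statement subsumes both parts (a) and (b), since a relative $L(J)$-cell complex is by definition a transfinite composite of pushouts of (coproducts of) maps in $L(J)$. By the last bullet of the hypotheses, it therefore suffices to prove that $\beta(f)$ is a weak equivalence in $\catA$.

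First I would unwind the cell-complex structure of $f$: it is a transfinite composite of pushouts of maps in $L(J)$. Since $\beta$ preserves colimits — in particular pushouts and transfinite compositions — the morphism $\beta(f)$ is a transfinite composite of pushouts of maps lying in the image class $\beta(L(J))$. The hypothesis $\alpha = \beta\circ L$ now identifies this class with $\alpha(J)$.

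Next I would observe that $\alpha(J)$ consists of acyclic cofibrations of $\catA$: the elements of $J$ are acyclic cofibrations of $\catC$ (they are the generating acyclic cofibrations), and $\alpha$ preserves acyclic cofibrations by assumption. Since acyclic cofibrations in any model category are stable under coproduct, pushout, and transfinite composition, $\beta(f)$ is itself an acyclic cofibration of $\catA$, and in particular a weak equivalence. By the final hypothesis of the lemma — invoked either directly, or through the stated special case $\beta = \alpha\circ R$ together with creation of weak equivalences — it follows that $R(f)$ is a weak equivalence in $\catC$, which is precisely condition (2) of Theorem~\ref{thm:transfer Kan}.

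I do not anticipate a genuine obstacle. The only points requiring a modicum of care are: tracking the classes of maps correctly through the identity $\beta\circ L = \alpha$, so that the hypothesis that $\alpha$ preserves acyclic cofibrations can be brought to bear on $\beta(L(J))$; and confirming that ``$\beta$ preserves colimits'' is exactly what licenses commuting $\beta$ past the (filtered) colimits and pushouts defining the relative cell complex, which it does since these are honest colimits. In effect the lemma is a bookkeeping variant of Lemma~\ref{lem:cell we criterion}, adapted to the situation where $\catD$ carries no auxiliary adjunction of its own but maps compatibly, via $\alpha$ and $\beta$, into a common model category $\catA$.
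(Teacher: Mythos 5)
Your proposal is correct and follows essentially the same route as the paper's own proof: push the cell complex through $\beta$ using colimit preservation, identify $\beta(L(J))=\alpha(J)$ as a class of acyclic cofibrations, conclude $\beta(f)$ is a weak equivalence, and transfer back to $R(f)$ via the last hypothesis. The only difference is that you spell out the closure of acyclic cofibrations under pushouts and transfinite composition, which the paper leaves implicit.
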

	\begin{proof}
		Let $f$ be an $L(J)$-relative cell complex in $\catD$. We have to check that $R(f)$ is a weak equivalence.
		Since $\beta$ preserves colimits, $\beta(f)$ is a relative $\beta(L(J))$-cell complex, hence a relative $\alpha(J)$-cell complex by the second assumption. But since $\alpha$ preserves acyclic cofibrations, $\beta(f)$ is a weak equivalence.
		Hence by the last assumption $R(f)$ is a weak equivalence.
	\end{proof}

\subsection{Reflective subcategories}
\label{sec:restriction}
A reflective subcategory $\catD\subset \catC$ is a full subcategory such that the inclusion functor $\iota$ has a left adjoint.
\[
\pi \colon \catC \leftrightarrows \catD \colon \iota.	
\]
In this case the counit of the adjunction $\pi\circ\iota\Rightarrow\mathit{id}$ is an isomorphism.
Dually, a full subcategory is coreflective if the inclusion has a right adjoint.

\begin{lemm}\label{lem:fully faithful}
Let $\catD\subset \catC$ be a reflective (resp. coreflective) subcategory, with $\pi$ the left (resp. right) adjoint to the inclusion $\iota$.
Suppose that this adjunction is Quillen with respect to model category structures on $\catC$ and $\catD$, and that $\pi$ preserves weak equivalences.
Then the inclusion $\catD\subset \catC$ is homotopically fully faithful, that is,
\[
	\Map^h_{\catD}(A, B) \simeq \Map^h_{\catC}(\iota^h(A), \iota^h(B))
\]
for any pair of objects $A,B\in \catD$.
\end{lemm}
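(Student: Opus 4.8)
The plan is to reduce the claim to the standard fact that a Quillen adjunction whose right adjoint is fully faithful (in a suitably homotopical sense) induces a fully faithful functor on homotopy categories, and more precisely on mapping spaces. Since $\catD \subset \catC$ is reflective, the inclusion $\iota$ is the \emph{right} adjoint, so I will be working with the Quillen pair $(\pi, \iota)$ and must show that the derived unit $A \to \iota^h \pi^h \iota^h(A)$ — equivalently, after noting $\pi\iota \cong \mathit{id}$, the derived counit side — behaves well enough. The key structural input is that $\pi \circ \iota = \mathit{id}$ on the nose (the counit is an isomorphism because the subcategory is full), so the only thing that can go wrong is the interaction with cofibrant/fibrant replacement, and this is exactly what the hypothesis ``$\pi$ preserves weak equivalences'' is there to control.

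Concretely, first I would compute $\Map^h_{\catC}(\iota^h(A), \iota^h(B))$ using a cofibrant replacement $QA \xrightarrow{\sim} A$ in $\catD$ and a fibrant replacement of $B$; since $\iota$ is right Quillen it preserves fibrant objects and fibrations, so $\iota(B)$ already computes $\iota^h(B)$ up to fibrant replacement in $\catC$, and I may take $\iota^h(A) = \iota(Q_{\catC}\iota(A))$ using a cofibrant replacement in $\catC$. The mapping space $\Map^h_{\catC}(\iota^h(A), \iota^h(B))$ is then modeled by $\Map_{\catC}(Q_{\catC}\iota(A), R_{\catC}\iota(B))$, or more invariantly by a homotopy function complex. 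Now apply the adjunction $(\pi,\iota)$: for $C \in \catC$ cofibrant and $D\in\catD$ fibrant one has a natural bijection $\Hom_{\catC}(C, \iota(D)) \cong \Hom_{\catD}(\pi(C), D)$, and since $\pi$ is left Quillen it sends $Q_{\catC}\iota(A)$ to a cofibrant object of $\catD$. The second step is to check that $\pi(Q_{\catC}\iota(A))$ is a cofibrant replacement of $A$ in $\catD$: it is cofibrant because $\pi$ is left Quillen, and the map $\pi(Q_{\catC}\iota(A)) \to \pi(\iota(A)) = A$ is a weak equivalence precisely because $\pi$ preserves weak equivalences and $Q_{\catC}\iota(A)\to\iota(A)$ is one. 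Hence $\pi(Q_{\catC}\iota(A)) \simeq QA$ in $\catD$, and the adjunction identifies the two function complexes, giving the desired equivalence $\Map^h_{\catD}(A,B) \simeq \Map^h_{\catC}(\iota^h(A),\iota^h(B))$.

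The cleanest way to phrase the argument at the level of mapping \emph{spaces} rather than just hom-sets in the homotopy category is to use a functorial cosimplicial or simplicial resolution: take a cosimplicial resolution $\Gamma^\bullet(Q_{\catC}\iota(A))$ of the cofibrant object $Q_{\catC}\iota(A)$ in $\catC$; since $\pi$ is left Quillen it carries this to a cosimplicial resolution of $\pi(Q_{\catC}\iota(A))$ in $\catD$, and the adjunction isomorphism is simplicial, so applying $\Hom(-,\iota(R_{\catC}\iota(B)))$ degreewise and using $\Hom_{\catC}(\Gamma^n(Q_{\catC}\iota(A)),\iota(\cdots)) \cong \Hom_{\catD}(\pi\Gamma^n(Q_{\catC}\iota(A)),\cdots)$ yields an isomorphism of simplicial sets, which by the two-out-of-three property and the replacement identifications above is a model for the claimed weak equivalence of mapping spaces.

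The main obstacle, and the only subtle point, is verifying that $\pi(Q_{\catC}\iota(A))$ genuinely computes the correct homotopy type in $\catD$, i.e. that no further (co)fibrant replacement in $\catD$ is needed on that side; this is where ``$\pi$ preserves weak equivalences'' (beyond merely acyclic cofibrations) does the essential work, since a priori a left Quillen functor only preserves weak equivalences between cofibrant objects, and here the target $\iota(A)$ of the replacement is not assumed cofibrant in $\catC$. Granting that hypothesis, everything else is a formal manipulation of Quillen adjunctions and function complexes, and the identity $\pi\iota = \mathit{id}$ removes any issue on the fibrant side.
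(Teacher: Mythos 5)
Your proposal is correct and follows essentially the same route as the paper: the crux in both is that $\pi$ applied to a cofibrant replacement $X\xrightarrow{\sim}\iota(A)$ yields a cofibrant replacement of $A$ in $\catD$ (using $\pi\iota=\id$ and the hypothesis that $\pi$ preserves weak equivalences), after which the adjunction applied degreewise to a frame identifies the two function complexes. The only cosmetic difference is that you push a cosimplicial resolution of the source through $\pi$, whereas the paper pulls a simplicial frame of the fibrant target through $\iota$; both are standard and equivalent models of the mapping space.
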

\begin{proof}
We only show the statement for $\catD\subset \catC$ a reflective subcategory. The coreflective case follows by duality.
For $A\in \catD$ fibrant and $X\xrightarrow{\sim}\iota(A)$ a cofibrant replacement of $\iota(A)$ in $\catC$ consider the derived adjunction counit 
\[
\pi(X) \to \pi\iota(A) \to A.
\]
The first arrow is a weak equivalence since $\pi$ preserves weak equivalences, and the second is an isomorphism by reflectivity. It follows that the derived adjunction counit is a weak equivalence.
Also note that $\pi(X)\to A$ is then a cofibrant replacement of $A$ in $\catD$.

Let $B\in \catD$ be fibrant (w.l.o.g.) and let $B^{\Delta}$ be a simplicial frame.
Then $\iota(B^\Delta)=:\iota(B)^\Delta$ is a simplicial frame of the fibrant object $\iota(B)$ in $\catC$ by Quillen adjunction. We hence have
\begin{align*}
\Map^h_{\catD}(A, B)
&\simeq
\Mor_{\catD}(\pi(X), B^\Delta)
=
\Mor_{\catC}(X, \iota(B^\Delta))
=
\Mor_{\catC}(X, \iota(B)^\Delta)
\\&\simeq 
\Map_{\catC}(X, \iota(B))
\simeq
\Map_{\catC}^h(\iota^h(A), \iota^h(B))
\end{align*}

\end{proof}

% \subsection{Restricting adjunctions}

We also want to restrict (Quillen) adjunctions to (co)reflective subcategories. This can often be done by the following elementary lemma.

\begin{lemm}\label{lem:restr adj}
    Let 
    \[
    L \colon \catC \leftrightarrows \catD \colon R
    \]
    be an adjunction and let 
    \begin{align*}
        \iota_{\catA} \colon \catA &\leftrightarrows \catC \colon \pi_{\catA}
        &
        \iota_{\catB} \colon \catB &\leftrightarrows \catD \colon \pi_{\catB}
    \end{align*}
    be coreflective subcategories. Suppose that the left adjoint $L$ lifts to a functor $L':\catA\to\catB$ satisfying $L\circ \iota_{\catA} =\iota_{\catB}\circ L'$, so that we have a diagram 
    \[
    \begin{tikzcd}
    \catA\ar[shift left]{r}{L'} 
    \ar[shift left]{d}{\iota_{\catA}}
    & \catB 
    \ar[shift left]{d}{\iota_{\catB}}
    \\
    \catC \ar[shift left]{r}{L} 
    \ar[shift left]{u}{\pi_{\catA}}
    & 
    \catD
    \ar[shift left]{u}{\pi_{\catB}}
    \ar[shift left]{l}{R}
    \end{tikzcd}
    \]
    Then the functor $R':=\pi_{\catA}\circ L \circ \iota_{\catB}$ is right-adjoint to $L'$, and the resulting square of adjunctions commutes, in the sense that the diagrams of left and right adjoints both commute.
    
    If in addition all four categories are model categories such that the adjunctions $(L,R)$ and $(\iota_{\catA}, \pi_{\catA})$ are Quillen and $\iota_B$ creates (acyclic) cofibrations, then $(L',R')$ is a Quillen adjunction as well.
    \end{lemm}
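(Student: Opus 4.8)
The proof is a purely formal unwinding of the universal properties involved; the model structures enter only in the last paragraph. First I would establish the adjunction $L'\dashv R'$, with $R'=\pi_{\catA}\circ R\circ\iota_{\catB}$, by concatenating natural bijections: for $a\in\catA$ and $b\in\catB$,
\[
\Mor_{\catB}(L'(a),b)
\;\cong\;\Mor_{\catD}(\iota_{\catB}L'(a),\iota_{\catB}(b))
\;=\;\Mor_{\catD}(L\iota_{\catA}(a),\iota_{\catB}(b))
\;\cong\;\Mor_{\catC}(\iota_{\catA}(a),R\iota_{\catB}(b))
\;\cong\;\Mor_{\catA}(a,\pi_{\catA}R\iota_{\catB}(b)).
\]
Here the first bijection is full faithfulness of the inclusion $\iota_{\catB}$ (coreflective subcategories being full by definition), the equality is the hypothesis $\iota_{\catB}\circ L'=L\circ\iota_{\catA}$, the third bijection is the adjunction $L\dashv R$, and the last is $\iota_{\catA}\dashv\pi_{\catA}$. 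Each step is natural in both arguments, so the composite exhibits $R'$ as a right adjoint of $L'$.

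For the commutativity of the square of adjunctions, the diagram of left adjoints commutes on the nose by the standing hypothesis $\iota_{\catB}\circ L'=L\circ\iota_{\catA}$. For the diagram of right adjoints, note that $\pi_{\catA}\circ R$ is right adjoint to the composite $L\circ\iota_{\catA}$ while $R'\circ\pi_{\catB}$ is right adjoint to $\iota_{\catB}\circ L'$; since these two functors coincide, uniqueness of adjoints provides a canonical natural isomorphism $\pi_{\catA}\circ R\cong R'\circ\pi_{\catB}$ filling the square --- the comparison isomorphism associated with the equality $\iota_{\catB}\circ L'=L\circ\iota_{\catA}$, which one can write out explicitly from the units and counits of the four given adjunctions if desired.

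For the last assertion it suffices to check that $L'$ preserves cofibrations and acyclic cofibrations. Since $(\iota_{\catA},\pi_{\catA})$ is a Quillen adjunction its left adjoint $\iota_{\catA}$ preserves both classes, and since $(L,R)$ is Quillen so does $L$; hence the composite $L\circ\iota_{\catA}=\iota_{\catB}\circ L'$ preserves (acyclic) cofibrations. As $\iota_{\catB}$ creates, and in particular reflects, (acyclic) cofibrations, it follows that $L'$ itself preserves (acyclic) cofibrations, which is precisely the statement that $(L',R')$ is a Quillen adjunction. I do not expect any genuine obstacle here: the argument is entirely formal. The only points deserving attention are that full faithfulness of $\iota_{\catB}$ is exactly what validates the first bijection above; that the square of right adjoints commutes only up to the canonical isomorphism just described; and that the Quillen conclusion uses precisely the hypothesis that $\iota_{\catB}$ \emph{reflects} (acyclic) cofibrations, not any Quillen property of $(\iota_{\catB},\pi_{\catB})$.
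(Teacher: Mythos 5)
Your proof is correct and follows essentially the same route as the paper: the same chain of natural bijections (full faithfulness of $\iota_{\catB}$, the hypothesis $\iota_{\catB}L'=L\iota_{\catA}$, then the adjunctions $L\dashv R$ and $\iota_{\catA}\dashv\pi_{\catA}$), uniqueness of adjoints for the square of right adjoints, and the observation that $\iota_{\catB}L'=L\iota_{\catA}$ preserves (acyclic) cofibrations combined with $\iota_{\catB}$ reflecting them for the Quillen claim. Your remarks that $R'$ should read $\pi_{\catA}\circ R\circ\iota_{\catB}$ and that the right-adjoint square commutes up to canonical natural isomorphism are correct and slightly more careful than the paper's phrasing.
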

\begin{proof}
We check the adjunction relation for objects $A\in \catA$ and $B\in \catB$.
\begin{align*}
\Mor_{\catB}(L'A, B) 
&= 
\Mor_{\catD}(\iota_{\catB}L'A, \iota_{\catB}B) 
=
\Mor_{\catD}(L\iota_{\catA}A, \iota_{\catB}B)
\\&=
\Mor_{\catC}(\iota_{\catA}A, R\iota_{\catB}B)
=
\Mor_{\catA}(A, \pi_{\catA} R\iota_{\catB} B)
\\
&=\Mor_{\catA}(A, R' B).
\end{align*}

Here we used fully faithfulness of $\iota_{\catB}$, and then the adjunction relations for $L$ and $\iota_{\catA}$.
The identification is clearly functorial in $A$ and $B$ and hence $(L',R')$ is an adjunction.

For the second assertion let $f$ be an (acyclic) cofibration in $\catA$. We need to  check that $L'f$ is again an (acyclic) cofibration. By assumption this means that $\iota_B L'f$ is an acylic cofibration. Since adjoints are unique and the diagram of left-adjoints commutes by assumptions, so must the diagram of right adjoints, $R'\pi_{\catB}=\pi_{\catA}R$.

We finally check that $(L',R')$ is Quillen under the stated hypothesis, by checking that for $f$ a(n acyclic) cofibration in $\catA$ we have that $L'f$ is an acyclic codibration in $\catB$.
By assumption this is the same as $\iota_{\catB}L'f=L\iota_{A}f$ being an acyclic cofibration in $\catD$.
But this is true since $\iota_{A}$ and $L$ preserve (acyclic) cofibrations. 

\end{proof}
We note that the existence of the right adjoint $R'$ in the above lemma could as well be deduced from the adjoint functor lifting theorem, using that coreflective subcategory inclusions are comonadic functors.

\subsection{Recollection of generalities on the slice model structure}

We recall the following well-known results on slice categories.

\begin{thm}[Hirschhorn \cite{HirschhornSlice}]
	\label{thm:slice model str}
Let $\catC$ be a cofibrantly generated model category with generating cofibrations $I$ and generating acyclic cofibrations $J$. Let $A\in\catC$ be an object. Then the following holds:
\begin{itemize}
	\item 
The undercategory $\catC^{A/}$ is a cofibrantly generated model category, with the following classes of distinguished morphisms:
\begin{itemize}
	\item The weak equivalences (resp. fibrations, cofibrations) are those morphisms that are weak equivalences (resp. fibrations, cofibrations) in $\catC$.
	\item The generating cofibrations (resp. acyclic cofibrations) are the coproducts $i\sqcup A$ of the generating cofibrations $i\in I$ (resp. acyclic cofibrations $i\in J$) of $\catC$.
\end{itemize}
\item The overcategory $\catC_{/A}$ is a cofibrantly generated model category, with the following classes of distinguished morphisms:
\begin{itemize}
	\item The weak equivalences (resp. fibrations, cofibrations) are those morphisms that are weak equivalences (resp. fibrations, cofibrations) in $\catC$.
	\item The generating cofibrations (resp. acyclic cofibrations) are the morphisms 
	\[
	\begin{tikzcd}	
	X \ar{rr}{i} \ar{dr} & & Y \ar{dl} \\
	& A & 
	\end{tikzcd}
	\]
	with $i\in I$ (resp. $i\in J$).
\end{itemize}
\end{itemize}
\end{thm}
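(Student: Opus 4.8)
The plan is to deduce the statement from the given model structure on $\catC$, using the transfer machinery of Theorem \ref{thm:transfer Kan} for the undercategory and a hands-on argument for the overcategory. The common first step is to observe that the forgetful functors $U^{A/}\colon \catC^{A/}\to\catC$ and $U_{/A}\colon\catC_{/A}\to\catC$ detect the three distinguished classes of morphisms by definition, and that they create enough (co)limits ($U_{/A}$ creates all colimits and connected limits, $U^{A/}$ creates all limits and connected colimits), so that, together with the evident initial and terminal objects, the slice categories are complete and cocomplete. Two-out-of-three and the retract axiom are then immediate. For the lifting axiom I would use that a commuting square in a slice category is the same datum as a commuting square in $\catC$ whose objects carry compatible maps from, resp.\ to, $A$, and that a solution of the underlying lifting problem in $\catC$ is automatically compatible with those maps: e.g.\ in $\catC_{/A}$ a lift $\ell\colon Y\to E$ of a square over $p\colon E\to B$ satisfies $(Y\xrightarrow{\ell}E\to A)=(Y\xrightarrow{\ell}E\xrightarrow{p}B\to A)=(Y\to B\to A)=(Y\to A)$, and dually in $\catC^{A/}$. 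The factorization axiom follows by factoring the underlying map in $\catC$ and equipping the intermediate object with the induced map to $A$ through the target, resp.\ from $A$ from the source.

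For cofibrant generation of the undercategory I would apply Theorem \ref{thm:transfer Kan} to the adjunction with left adjoint $X\mapsto(A\to A\sqcup X)$ and right adjoint $U^{A/}$: the transferred structure coincides with the one just described (its cofibrations, defined by the left lifting property against fibrations, are the cofibrations of $\catC$ by the lifting remark above), and the generating (acyclic) cofibrations become $\{A\sqcup i\mid i\in I\}$, resp.\ $\{A\sqcup j\mid j\in J\}$. Condition (1) of the transfer theorem holds because $U^{A/}$ preserves filtered (hence connected) colimits. For condition (2) I would note that a pushout in $\catC^{A/}$ of $A\sqcup j$ along a map corresponding by adjunction to $g\colon X\to Z$ has underlying map the pushout $Z\to Z\sqcup_X Y$ of $j$ along $g$ in $\catC$, so it is an acyclic cofibration of $\catC$ and in particular a weak equivalence, and transfinite composites of such have underlying relative $\{j\}$-cell complexes in $\catC$; alternatively one can invoke Lemma \ref{lem:cell we criterion 2} with $\beta=U^{A/}$ and $\alpha=A\sqcup(-)\colon\catC\to\catC$, which sends $J$ to acyclic cofibrations since $A\sqcup j$ is a pushout of $j$.

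For the overcategory the forgetful functor $U_{/A}$ has no left adjoint, so the transfer theorem does not apply and I would argue directly. I would take $I_{/A}$ (resp.\ $J_{/A}$) to be the set of triangles $(X\xrightarrow{i}Y, Y\to A)$ displayed in the statement, with $i\in I$ (resp.\ $i\in J$) and $Y\to A$ arbitrary; this is a set because $\catC$ is locally small. By the same lifting remark, a lifting problem in $\catC_{/A}$ against such a triangle is exactly a lifting problem in $\catC$ against $i$, so the $I_{/A}$-injective (resp.\ $J_{/A}$-injective) morphisms of $\catC_{/A}$ are precisely the acyclic fibrations (resp.\ fibrations). Since $U_{/A}$ creates all colimits, a relative $I_{/A}$-cell complex lies over a relative $I$-cell complex, and as the domains of $I_{/A}$ and $J_{/A}$ are the domains of $I$ and $J$ equipped with a structure map — and $\Mor_{\catC_{/A}}$ out of such an object is a subset of the corresponding $\Mor_{\catC}$ — their smallness is inherited from $\catC$. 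The small object argument then yields the functorial factorizations and exhibits the cofibrations of $\catC_{/A}$ as the retracts of relative $I_{/A}$-cell complexes.

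The axiom verifications are routine; the one point that needs genuine care is the overcategory, where, lacking a left adjoint to the forgetful functor, one cannot invoke the transfer theorem and must instead check the small object argument by hand — in particular verifying the set-theoretic point that $I_{/A}$ and $J_{/A}$ are honest sets and that the smallness of their domains passes from $\catC$ to $\catC_{/A}$. I expect no deeper difficulty, since this is in substance Hirschhorn's argument; the purpose of recording it here is that its only hypothesis, cofibrant generation of $\catC$, is exactly what will be available in the applications.
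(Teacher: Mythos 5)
The paper does not prove this statement; it is quoted from the literature with a citation to Hirschhorn, so there is no in-text argument to compare against. Your proof is correct and is in substance the standard (Hirschhorn) argument: the reduction of lifting problems and factorizations to the underlying category, the transfer along $X\mapsto(A\to A\sqcup X)$ for the undercategory, and the direct small-object argument for the overcategory (where, as you rightly note, the forgetful functor has no left adjoint) are all sound, with only routine details elided.
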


\begin{prop}[Base change]\label{prop:slice base change}
% Let $\catC$ be as above and equip the categories $\catC^{A/}$ and $\catC_{/A}$ with the model structures as above. $
Let $f:A\to B$ be a morphism in $\catC$. Then the following holds.
\begin{itemize}
\item There is a Quillen adjunction 
\[
f_* \colon 
\catC^{A/}
\leftrightarrows 
\catC^{B/}
\colon 
f^*
\]
with $f^*$ composition with $f$ and $f_*$ pushout along $f$.
This is a Quillen equivalence if $f$ is a weak equivalence and either of the following holds: 
\begin{itemize}
\item $\catC$ is left proper.
\item $f$ is an acyclic cofibration.
\item $A$ and $B$ are cofibrant objects.
\end{itemize}
\item There is a Quillen adjunction 
\[
f_* 
\colon 
\catC_{/A}
\leftrightarrows 
\catC_{/B}
\colon 
f^*
\]
with $f_*$ composition with $f$ and $f^*$ pullback along $f$.
This is a Quillen equivalence if $f$ is a weak equivalence and either of the following holds: 
\begin{itemize}
\item $\catC$ is right proper.
\item $f$ is an acyclic fibration.
\item $A$ and $B$ are fibrant objects.
\end{itemize}
\end{itemize}
\end{prop}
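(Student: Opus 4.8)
The plan is to establish the coslice (undercategory) statement directly and then deduce the slice (overcategory) statement from it by passing to the opposite model category. Under $\catC\mapsto\catC^{op}$ the slice category $\catC_{/A}$ becomes the coslice $(\catC^{op})^{A/}$, pushout along $f$ becomes pullback along $f$, cofibrations and fibrations (hence cofibrant and fibrant objects) are interchanged, and left properness becomes right properness; so the three cases in the two halves of the statement match up exactly, and it suffices to treat $f_*\colon\catC^{A/}\leftrightarrows\catC^{B/}\colon f^*$.

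First I would record the adjunction and the Quillen property. The functor $f^*$ sends an object $B\to X$ to $A\xrightarrow{f}B\to X$, leaving the underlying object and morphisms of $\catC$ untouched, and its left adjoint $f_*$ sends $A\to Y$ to $B\to B\sqcup_AY$; the natural isomorphism $\Mor_{\catC^{B/}}(f_*Y,X)\cong\Mor_{\catC^{A/}}(Y,f^*X)$ is immediate from the universal property of the pushout. By Theorem~\ref{thm:slice model str} the (acyclic) fibrations in a coslice are precisely the morphisms whose underlying morphism in $\catC$ is an (acyclic) fibration; since $f^*$ does not change underlying morphisms it preserves (acyclic) fibrations, so $(f_*,f^*)$ is a Quillen adjunction, and moreover $f^*$ both preserves and reflects \emph{all} weak equivalences.

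For the Quillen equivalence I would use the latter fact to shorten the check. Take $Y\in\catC^{A/}$ cofibrant and $X\in\catC^{B/}$ fibrant, and let $\phi\colon f_*Y\to X$ have adjunct $\phi^\flat=f^*(\phi)\circ\eta_Y$, where $\eta_Y\colon Y\to f^*f_*Y$ is the unit. Since $f^*$ preserves and reflects weak equivalences, $f^*(\phi)$ is a weak equivalence iff $\phi$ is; hence, provided $\eta_Y$ is a weak equivalence, two-out-of-three gives that $\phi^\flat$ is a weak equivalence iff $\phi$ is, which is the defining property of a Quillen equivalence. Now the underlying morphism of $\eta_Y$ is the map $Y\to B\sqcup_AY$, i.e.\ the pushout of $f$ along the structure map $A\to Y$, and cofibrancy of $Y$ in $\catC^{A/}$ means exactly that $A\to Y$ is a cofibration in $\catC$ (the initial object of $\catC^{A/}$ being $\id_A$). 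If $f$ is an acyclic cofibration this pushout is again an acyclic cofibration, in particular a weak equivalence; if $\catC$ is left proper it is the pushout of the weak equivalence $f$ along a cofibration, hence a weak equivalence; and if $A$ and $B$ are cofibrant then $Y$ is cofibrant (being the target of a cofibration out of the cofibrant object $A$) and $Y\to B\sqcup_AY$ is the pushout of a weak equivalence between cofibrant objects along a cofibration between cofibrant objects, hence a weak equivalence.

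The only non-formal ingredient is this last case, which invokes the gluing (cube) lemma: in any model category a pushout of a weak equivalence between cofibrant objects along a cofibration between cofibrant objects is a weak equivalence. This is the step I expect to carry the actual content, although it is a standard lemma that I would simply cite (e.g.\ from Hirschhorn); all the rest is bookkeeping with the explicit slice model structures of Theorem~\ref{thm:slice model str} and with the self-duality of the situation under $\catC\leftrightarrow\catC^{op}$.
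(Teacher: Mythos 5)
Your proposal is correct. Note that the paper itself does not give an argument for this proposition at all: its ``proof'' consists of citations to Hirschhorn, to Li, and to the nLab, so there is no in-paper route to compare against. What you have written is essentially a self-contained version of the standard argument underlying those references: the reduction of the slice case to the coslice case by passing to $\catC^{op}$, the observation that $f^*$ creates (acyclic) fibrations and reflects all weak equivalences so that the Quillen-equivalence check reduces to the unit $\eta_Y\colon Y\to B\sqcup_A Y$ being a weak equivalence for cofibrant $Y$, and the three case-by-case verifications. The only non-formal input is, as you say, the last case, which is exactly Hirschhorn's statement that every model category is left proper relative to cofibrant objects (pushout of a weak equivalence between cofibrant objects along a cofibration is a weak equivalence); citing that is appropriate. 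Your write-up is, if anything, more informative than the paper's, since it makes explicit where each of the three hypotheses enters.
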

\begin{proof}
The adjunction property is \cite[Lemma 7.6.6]{HirschhornBook}. 
The remaining statements are essentially found in \cite[Proposition 3, Corollary 3.3]{Li}, and the above form of the result can be found in \cite{nlab_slice}. 
\end{proof}

\begin{prop}[{Slicing Quillen adjunctions, \cite[Proposition 2.5]{Li}, \cite{nlab_slice}}]
	\label{prop:slicing adj}
	Let 
	\[
	L 
	\colon 
	\catC
	\leftrightarrows 
	\catD
	\colon 
	R
	\]
	be a Quillen adjunction and let $A\in \catC$ and $B\in \catD$ be objects. Then the following holds.
	\begin{itemize}
	\item There are Quillen adjunctions 
	\begin{align*}
		L^{A/} 
		\colon 
		\catC^{A/}
		&\leftrightarrows 
		\catD^{L(A)/}
		\colon 
		R^{A/}
		\\
		L^{B/} 
		\colon 
		\catC^{R(B)/}
		&\leftrightarrows 
		\catD^{B/}
		\colon 
		R^{B/}
	\end{align*}
	with $L^{A/}$ (resp. $R^{B/}$) the obvious functors obtained by applying $L$ (resp. $R$) and the adjoint the composition of the application of $R$ (resp. $L$) and the base change along the adjunction (co)unit.
	
	\item Dually, there are analogously defined Quillen adjunctions
	\begin{align*}
		L_{/A} 
		\colon 
		\catC_{/A}
		&\leftrightarrows 
		\catD_{/L(A)}
		\colon 
		R_{/A}
		\\
		L_{/B} 
		\colon 
		\catC_{/R(B)}
		&\leftrightarrows 
		\catD_{/B}
		\colon 
		R_{/B}.
	\end{align*}
\end{itemize}
\end{prop}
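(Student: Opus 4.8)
The plan is to reduce the four assertions to two and then to routine bookkeeping with units and counits. The overcategory statements will follow from the undercategory ones by passing to opposite categories: if $(L,R)$ is a Quillen adjunction $\catC\leftrightarrows\catD$ then $(R^{op},L^{op})$ is a Quillen adjunction $\catD^{op}\leftrightarrows\catC^{op}$, the overcategory model structure of Theorem~\ref{thm:slice model str} on $\catC_{/A}$ is the opposite of the undercategory model structure on $(\catC^{op})^{A/}$, and under this dictionary $(L_{/A},R_{/A})$ becomes the second undercategory adjunction of $(R^{op},L^{op})$ and $(L_{/B},R_{/B})$ the first. So I will only treat the two undercategory adjunctions. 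Explicitly, $L^{A/}$ sends $(A\to X)$ to $(L(A)\to L(X))$, with putative right adjoint $R^{A/}$ sending $(L(A)\to Y)$ to $(A\xrightarrow{\eta_A}R(L(A))\to R(Y))$; dually $R^{B/}$ sends $(B\to Y)$ to $(R(B)\to R(Y))$, and its putative left adjoint $L^{B/}$ sends $(R(B)\to X)$ to the pushout $\bigl(B\to B\sqcup_{L(R(B))}L(X)\bigr)$ of $L(R(B))\xrightarrow{\epsilon_B}B$ along $L(R(B))\to L(X)$.

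To establish the adjunction isomorphisms I would check that the adjunction bijection of $(L,R)$ restricts appropriately. A morphism $L^{A/}(A\to X)\to(L(A)\to Y)$ amounts to a map $h\colon L(X)\to Y$ compatible with the maps from $L(A)$, and a morphism $(A\to X)\to R^{A/}(L(A)\to Y)$ to a map $\tilde h\colon X\to R(Y)$ compatible with the maps from $A$; passing from $h$ to its transpose $\tilde h=R(h)\circ\eta_X$ and using naturality of $\eta$ together with a triangle identity shows that one compatibility condition is equivalent to the other. For $(L^{B/},R^{B/})$ one would run the same argument after first rewriting, by the universal property of the pushout, a morphism out of $B\sqcup_{L(R(B))}L(X)$ under $B$ as a map $L(X)\to Y$ subject to a single coherence condition involving $\epsilon_B$; the triangle identity $R(\epsilon_B)\circ\eta_{R(B)}=\id$ then matches it with the corresponding condition in $\catC^{R(B)/}$, and naturality in both variables is inherited from $(L,R)$. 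Equivalently --- and this is the presentation I would prefer --- each of these adjunctions is a composite of a base-change adjunction (Proposition~\ref{prop:slice base change}, which is always Quillen) with a slicing of $(L,R)$: for instance $L^{A/}\cong L^{L(A)/}\circ(\eta_A)_*$ and $R^{A/}\cong(\eta_A)^*\circ R^{L(A)/}$, the first isomorphism resting on the fact that $L$ preserves pushouts and on the triangle identity $\epsilon_{L(A)}\circ L(\eta_A)=\id$, which collapses the resulting iterated pushout back to $L(X)$.

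Quillen-ness is then immediate from Theorem~\ref{thm:slice model str}, according to which the weak equivalences, fibrations and cofibrations of a slice category are created by the forgetful functor to the ambient category. Since $L^{A/}$ lies over $L$ along the forgetful functors $\catC^{A/}\to\catC$ and $\catD^{L(A)/}\to\catD$ and $L$ is left Quillen, $L^{A/}$ preserves cofibrations and acyclic cofibrations, so $(L^{A/},R^{A/})$ is Quillen; dually $R^{B/}$ lies over the right Quillen functor $R$, hence preserves fibrations and acyclic fibrations, so $(L^{B/},R^{B/})$ is Quillen. (From the composite viewpoint this is just that a composite of Quillen adjunctions is Quillen.) I do not expect a genuine obstacle: the argument is formal throughout. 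The one step that demands care is the identification of the iterated pushout in $L^{L(A)/}\circ(\eta_A)_*$ with $L(X)$ --- equivalently, the verification that the naive description of $L^{A/}$ agrees with the composite one --- which hinges on choosing the correct triangle identity and on $L$ commuting with the relevant pushouts; it is easy to mix up the directions of units, counits and base changes, but nothing deeper is involved.
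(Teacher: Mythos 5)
Your argument is correct. Note that the paper itself supplies no proof of this proposition --- it is quoted from the cited references (Li, and the nLab entry on slice model structures) --- so there is nothing internal to compare against; your write-up is essentially the standard argument one finds in those sources. Both of your routes work: the direct transposition of hom-sets (using naturality of $\eta$ and the triangle identities to match the compatibility conditions under $A$, resp.\ out of the pushout under $B$), and the factorization of each sliced adjunction as a base-change adjunction composed with the ``matched'' slicing, after which Quillen-ness is immediate because (co)fibrations and weak equivalences in the slice categories are created by the forgetful functors (Theorem~\ref{thm:slice model str}) and $L^{A/}$, $R^{B/}$ lie over $L$, $R$. The reduction of the overcategory statements to the undercategory ones by passing to opposite model categories is also sound, and your identification of the iterated pushout with $L(X)$ via $\epsilon_{L(A)}\circ L(\eta_A)=\id$ is the right bookkeeping.
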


\subsection{Unitary colored operads and colored $\La$ operads}
\label{sec:La operads}
A unitary\footnote{This notation is traditional but arguably confusing -- it is not related to the operadic unit.} simplicial colored operad is a colored operad $\POp_*$ such that each space of unary operations is a point,
\[
\POp_*(\underline 0; c) = * \quad \text{for all $c\in \colC$.}	
\]
One may encode such a unitary operad $\POp_*$ equivalently by its suboperad without zero-ary operations 
\[
\POp(\underline r;c)
=
\begin{cases}
	\POp_*(\underline r;c) & \text{if $\underline r\neq \underline 0$} \\
	\emptyset & \text{otherwise}
\end{cases},
\]
together with the set of operations 
\[
	\POp(\underline r;c)
	\to 
	\POp(\underline s;c)
\]
for $\underline s\leq \underline r$, that are obtained by composition with the nullary operations of any color.
We call the latter data a $\La$-structure on the operad $\POp$, and $\POp$ a $\La$-operad.

To formalize the notion, let $\colC\La$ be the category with objects $(\underline r;c)$ and morphisms 
\[
\Mor_{\colC\La}((\underline r;c), (\underline s;d))
=
\begin{cases}
	\{(f_c)_{c\in \colC} \mid 
	f_c : \{1,\dots,r_c\} \hookrightarrow \{1,\dots,s_c\} \}\text{for $c= d$} \\
	\emptyset & \text{for $c\neq d$}
\end{cases}.
\]  
The category of colored $\La$-sequences is then the category of contravariant functors
\[
\La\CsSetSeq := \sSet^{\colC\La^{op}}.
\]
There is a forgetful functor  
\[
\La\CsSetOp \to \La\CsSetSeq
\]
sending a colored $\La$ operad to its underlying $\La$ sequence.
We emphasize that the concepts of $\La$ operad and unitary operad are equivalent, so the reader may opt to replace the former by the latter. However, we shall always assume below that our operads have no zero-ary operations, with those operations encoded by additional algebraic structure in the case of $\La$ operads.

\section{Rational homotopy theory of colored operads and $\La$-operads}
\label{sec:operad rht}

We briefly recall here the construction of a rational homotopy theory of operads and $\La$-operads by the first author from \cite{OperadHomotopyBook, ExtendedRHT}. While these results have been formulated for non-colored operads, the constructions and proofs carry over virtually unchanged to the colored setting, for a finite set of colors $\colC$. 
We shall hence state the results of \cite{OperadHomotopyBook, ExtendedRHT} directly in their generalized colored form, but only briefly remark on the proofs.

\subsection{Model category structures for colored operads and Hopf cooperads}
First, we consider the category $\CsSetOp$ of $\colC$-colored simplicial operads. Recall that we do not allow our operads to have operations in arity zero.
One can construct a cofibrantly generated model structure by right transfer along the adjunction
\begin{align*}
\FreeOp \colon \CsSetSeq &\leftrightarrows \CsSetOp \colon U
% &
% \FreeOp_\La \colon \La\CsSetSeq &\leftrightarrows \La\CsSetOp \colon U
\end{align*}
where the right adjoint is the forgetful functor and the left adjoint is the free operad functor. Concretely, this means that the classes of weak equivalences and (co)fibrations in $\CsSetOp$ are as follows.

 \begin{itemize}
	\item The weak equivalences are the morphisms that are weak equivalences of simplicial sets in every arity. 
	\item The fibrations in $\CsSetOp$ are the morphisms that are fibrations of simplicial sets in every arity.
	% In $\La\CsSetOp$ the fibrations are those morphisms that are fibrations of colored simplicial $\La$ sequences with respect to the Reedy model structure on $\La\CsSetSeq$.
	\item The cofibrations are the morphisms which have the left-lifting property with respect to acyclic fibrations.
	\item The generating (acyclic) cofibrations are the morphisms of the form $\FreeOp(f)$ obtained by applying the free operad functor to the generating (acyclic) cofibrations of the category of simplicial sequences.
\end{itemize}

We refer to \cite[II.8]{OperadHomotopyBook} for the proof of the following statement. %II.11
\begin{prop}
The above distinguished classes of morphisms define a cofibrantly generated model category structure on the category $\CsSetOp$.
% and $\La\CsSetOp$.
\end{prop}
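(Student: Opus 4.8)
The plan is to verify the hypotheses of the Kan transfer theorem (Theorem \ref{thm:transfer Kan}) for the adjunction $\FreeOp \colon \CsSetSeq \leftrightarrows \CsSetOp \colon U$, starting from the known cofibrantly generated model structure on $\CsSetSeq$ (which is itself obtained arity-wise from the standard model structure on $\sSet$). First I would record that $\CsSetOp$ is complete and cocomplete: limits are computed arity-wise as in $\CsSetSeq$, and colimits exist because $\FreeOp$ is a left adjoint and algebras over a finitary monad on a cocomplete category are cocomplete. Condition (1) of Theorem \ref{thm:transfer Kan} (smallness) is then immediate: the right adjoint $U$ preserves filtered colimits, since in $\CsSetOp$ filtered colimits are created by the forgetful functor (the free operad monad is finitary), so the domains of the $\FreeOp(i)$ and $\FreeOp(j)$ are small, being images under $\FreeOp$ of the small domains in $\CsSetSeq$.

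The substantive point is condition (2): every relative $\FreeOp(J)$-cell complex in $\CsSetOp$ must become a weak equivalence (i.e., an arity-wise weak homotopy equivalence of simplicial sets) after applying $U$. Here I would invoke Lemma \ref{lem:cell we criterion 2} with $\catC = \CsSetSeq$, $\catD = \CsSetOp$, and $\catA = \sSet$, taking $\alpha \colon \CsSetSeq \to \sSet$ and $\beta \colon \CsSetOp \to \sSet$ to be, for each arity $(\underline r; c)$, the evaluation functor $\MOp \mapsto \MOp(\underline r; c)$ (one applies the lemma once for each arity, or equivalently replaces $\sSet$ by the product $\sSet^{\colC\Seq}$ with its product model structure). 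The evaluation $\beta$ on $\CsSetOp$ preserves colimits — colimits of operads are computed by the usual reflexive-coequalizer/bar construction, but evaluation at a fixed arity still commutes with filtered colimits and with the relevant coequalizers, and in any case $\beta = \alpha \circ U$ with $U$ preserving the colimits appearing in a cell attachment — and $\beta$ creates weak equivalences and fibrations by definition of the transferred structure. The compatibility $\alpha = \beta \circ \FreeOp$ holds since $U\FreeOp$ of a sequence, evaluated in a given arity, contains the identity summand $\MOp(\underline r; c)$, and more precisely the free operad is built from the original sequence by operadic composition; what one actually needs is that $\alpha$ preserves acyclic cofibrations, which reduces to the statement that in each arity the free operad on an arity-wise acyclic cofibration of sequences is again an arity-wise acyclic cofibration of simplicial sets.

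That last reduction is where the real work sits, and it is exactly the content of \cite[II.8]{OperadHomotopyBook}: one must show that attaching a free cell $\FreeOp(j)$ along a map to an operad $\POp$, i.e. forming the pushout $\POp \to \POp'$ in $\CsSetOp$ of $\FreeOp(A) \to \FreeOp(B)$ with $j \colon A \trivialrightarrowtail B$ a generating acyclic cofibration of $\CsSetSeq$, yields an arity-wise acyclic cofibration $\POp(\underline r; c) \to \POp'(\underline r; c)$ of simplicial sets. The standard argument filters $\POp'$ by the number of cells from $B/A$ used, exhibiting each successive layer as a pushout of a product of copies of $A$ and $B$ (indexed by trees / the combinatorics of operadic composition) quotiented by symmetric group actions; because $A\to B$ is an acyclic cofibration and cofibrations and acyclic cofibrations of simplicial sets are closed under the relevant products, pushouts, and (by the pushout-product axiom and the fact that acyclic cofibrations are preserved by $-\times K$) the induced maps on these layers are acyclic cofibrations, hence so is the transfinite composite. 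The one colored-operad subtlety — that the combinatorial indexing now involves trees with colored edges — does not affect the argument, since the set of colors is finite and each layer is still a finite-colimit expression in $\sSet$ built from $A$ and $B$. I expect the bookkeeping of this filtration, and the verification that the symmetric-group quotients do not destroy the acyclic-cofibration property (which uses that $\sSet$ with its model structure behaves well under these free-action quotients), to be the main obstacle; but since it is carried out in full in \cite[II.8]{OperadHomotopyBook} and the colored generalization is formal, I would cite that reference for the details rather than reproduce them. Condition (2)(b) — closure under transfinite composition — is then automatic, since acyclic cofibrations of simplicial sets are closed under transfinite composition. This completes the verification of Theorem \ref{thm:transfer Kan}'s hypotheses and hence establishes the model structure.
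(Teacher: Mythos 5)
Your proposal lands in the same place as the paper, whose entire proof is a citation of \cite[II.8]{OperadHomotopyBook} together with the blanket remark that Fresse's arguments carry over verbatim to finitely many colors; you correctly identify that the substantive content is the filtration analysis of pushouts along $\FreeOp(j)$, you outline that filtration accurately, and you defer to the same reference for the combinatorial details. So in substance the proof is fine and follows the paper's route, just with the verification of the Kan transfer hypotheses spelled out rather than left implicit.

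One step, however, is unsound as written: the attempted invocation of Lemma \ref{lem:cell we criterion 2} with $\beta$ the arity-wise evaluation of a colored operad. That lemma requires $\beta$ to preserve colimits, and arity-wise evaluation on $\CsSetOp$ does not: pushouts (and coproducts) of operads are emphatically not computed on underlying symmetric sequences, which is precisely why condition (2) of Theorem \ref{thm:transfer Kan} is nontrivial here. Your parenthetical rescue --- that $U$ ``preserves the colimits appearing in a cell attachment'' --- is also false for the pushout stage of a cell attachment; $U$ only preserves the filtered/transfinite part. Lemma \ref{lem:cell we criterion 2} is the right tool on the cooperad/Hopf side of the paper, where colimits genuinely are created arity-wise, but not for simplicial operads. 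Fortunately you do not actually use the lemma: the paragraph that follows it states and sketches the correct replacement, namely that a pushout of $\FreeOp(A)\trivialrightarrowtail\FreeOp(B)$ over an operad $\POp$ is an arity-wise acyclic cofibration, proved by the tree-indexed filtration in \cite[II.8]{OperadHomotopyBook}. If you delete the Lemma \ref{lem:cell we criterion 2} detour and let that direct argument carry condition (2), the proof is complete and matches the paper's.
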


Next, one considers the category of non-negatively graded conilpotent coaugmented colored dg cooperads $\CdgOpc$. 
This category fits into an adjunction 

\begin{align*}
	U \colon \CdgOpc &\leftrightarrows \CdgSeq \colon \FreeOp^c
	% &
	% U \colon \La\CdgOpc &\leftrightarrows \La\dg\CSeq \colon \FreeOp^c_\La
	,
\end{align*}
with $U$ the forgetful functor and the right adjoint the cofree cooperad functor.
We then define a model category structures on $\CdgOpc$ by left transfer via the above adjunction. Concretely, this means that one defines the distinguished classes of morphisms as follows.
\begin{itemize}
	\item The weak equivalences are the arity-wise quasi-isomorphisms.
	\item The cofibrations in $\CdgOpc$ are the maps that are arity-wise injective in positive cohomological degrees.
	% The cofibrations in $\La\CdgOpc$ are those moprphisms that are Reedy cofibrations in the underlying category $\La\dg\CSeq$.
	\item The fibrations are all morphisms that have the right-lifting property with respect to the acyclic cofibrations.
	\item The model structure is cofibrantly generated. The generating cofibrations can be taken to be the cofibrations between overall (in all arities together) finite dimensional colored dg cooperads. The generating acyclic cofibrations can be taken to be the cofibrations between arity- and degree-wise bounded cooperads of totally at most countable dimension.
\end{itemize}

\begin{prop}
	The above distinguished classes of morphisms define a cofibrantly generated model category structure on the category $\CdgOpc$.
	%  and $\La\CdgOpc$.
\end{prop}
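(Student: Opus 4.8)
The plan is to construct this model structure by \emph{left transfer} (left induction) along the forgetful/cofree adjunction $U \colon \CdgOpc \leftrightarrows \CdgSeq \colon \FreeOp^c$: a morphism $f$ of $\CdgOpc$ is declared a weak equivalence, resp.\ a cofibration, exactly when $U(f)$ is an arity-wise quasi-isomorphism, resp.\ an arity-wise monomorphism in positive degrees, and the fibrations are the maps with the right lifting property against acyclic cofibrations. The dual of Kan's Theorem~\ref{thm:transfer Kan} does not apply here --- cofree cooperads and the generic failure of cosmallness obstruct a co-small-object argument --- so I would instead invoke Smith's recognition theorem for combinatorial model structures, equivalently the general machinery for left-induced model structures. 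This reduces the statement to: (1) $\CdgSeq$ with the model structure of Section~\ref{sec:preliminaries} is combinatorial, which is clear since it is cofibrantly generated and its underlying category is a countable product of categories of equivariant dg vector spaces, hence locally presentable; (2) $\CdgOpc$ is locally presentable, which is the standard fact that coaugmented conilpotent cooperads in a locally presentable symmetric monoidal category again form a locally presentable category, the conilpotence being essential in order to present $\CdgOpc$ by a limit sketch; and (3) the \emph{acyclicity condition}: every morphism of $\CdgOpc$ with the right lifting property against all $U$-cofibrations is a $U$-weak equivalence. The formal parts of Smith's theorem are immediate here because $U$, being a left adjoint, preserves all colimits, so the classes of $U$-cofibrations and $U$-acyclic cofibrations are automatically closed under pushout and transfinite composition. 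Granting (1)--(3), the model structure exists and is combinatorial, and the explicit generating sets claimed in the statement drop out of the accessibility bookkeeping: by conilpotence every colored dg cooperad is the filtered union of its finite-dimensional (for the cofibrations; at most countable-dimensional for the acyclic ones) sub-cooperads, so every cofibration is a relative cell complex on cofibrations between such small sub-cooperads.

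Two structural facts, resting respectively on working over $\QQ$ and on conilpotence, make (2) and (3) tractable and isolate the one delicate point. First, $\FreeOp^c$ is computed arity-wise as a direct sum over (colored) trees of automorphism-invariants of tensor powers of its argument decorated on the vertices; over $\QQ$ both tensor products and invariants under finite groups are exact, so $\FreeOp^c$ carries arity-wise monomorphisms in positive degrees to such monomorphisms and preserves quasi-isomorphisms --- in particular it is right Quillen and even preserves cofibrations. Second, by conilpotence and coassociativity the sub-cooperad of a conilpotent dg cooperad generated by a finite-dimensional subspace closed under the differential is again finite-dimensional.

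The main obstacle is the acyclicity condition (3); this is the technical core of Fresse's construction and the place where conilpotence and characteristic zero are genuinely used. The strategy I would follow is to show that a morphism $f \colon \COp \to \DOp$ of $\CdgOpc$ with the right lifting property against all $U$-cofibrations has $U(f)$ an arity-wise acyclic fibration of dg vector spaces, hence in particular an arity-wise quasi-isomorphism. Given a lifting problem against a generating (acyclic) cofibration of $\dgVect$ placed in a single arity, I would replace $\COp$ and $\DOp$ by the finite-dimensional sub-cooperads generated by the finitely many elements occurring in the problem together with their images under $f$; exactness of the tree-wise constructions then packages the relevant inclusions into an honest lifting problem in $\CdgOpc$ against a cofibration between finite-dimensional cooperads, which $f$ solves by hypothesis, and one transports the solution back. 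Assembling these solutions coherently --- and in particular controlling cohomology so as to obtain an acyclic fibration and not merely a fibration --- is the step that requires care; I would carry it out following \cite{OperadHomotopyBook} essentially verbatim, the colored setting contributing nothing beyond bookkeeping the colors and the groups $\bbS_{\underline r}$. Once (1)--(3) and the generating sets are in place, the remaining model-category axioms (two-out-of-three, retracts, one half of each lifting axiom, and the two functorial factorizations produced by the generalized small-object argument) are formal.
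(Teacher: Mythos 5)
Your proposal is correct in outline, and it is worth saying up front that the paper itself offers no argument for this proposition beyond the citation ``see \cite[Theorem 1.4]{ExtendedRHT}, in which one just needs to replace cooperads by colored cooperads'' --- so the real comparison is with Fresse's proof there. You package the construction as a left-induced/combinatorial model structure and appeal to Smith's recognition theorem, whereas Fresse runs a hands-on (co)small-object-type argument directly with the explicit generating sets; but the mathematical substance is the same in both cases, and you identify it accurately: local presentability of conilpotent cooperads, exactness of tensor products and finite-group invariants over $\QQ$ (so that the tree-wise cofree construction preserves cofibrations and quasi-isomorphisms), the fact that the sub-cooperad generated by a finite-dimensional $d$-closed subspace is finite-dimensional by conilpotence, and the acyclicity condition as the genuinely hard step. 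Your framing buys you the formal closure properties and the mere \emph{existence} of generating sets for free, at the cost of still having to do Fresse's reduction-to-finite-dimensional-sub-cooperads argument to identify the generators with the specific classes named in the statement and to verify acyclicity --- which you correctly defer to \cite{OperadHomotopyBook, ExtendedRHT}, exactly as the paper does. The one place where your write-up is glibber than the situation warrants is the claim that the explicit generating sets ``drop out of the accessibility bookkeeping'': exhibiting an arbitrary cofibration as a retract of a relative cell complex on cofibrations between finite-dimensional cooperads is precisely Fresse's key technical lemma and is not a formal consequence of the filtered-union statement; since you flag the adjacent step as requiring care and cite the source, this is a presentational quibble rather than a gap.
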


For the proof we refer to \cite[Theorem 1.4]{ExtendedRHT}, in which one just needs to replace cooperads by colored cooperads.

% The construction of the model structure in the non-negatively graded case can be paralleled for the category $\dgZOpc$ of $\ZZ$-graded conilpotent coaugmented dg Hopf cooperads, see \cite{FWSimplicial}. In this case one merely has to modify the definitions of the classes of distinguished morphisms as follows.

% \begin{itemize}
% 	\item The weak equivalences of $\dgZHOpc$ are the arity-wise quasi-isomorphisms.
% 	\item The cofibrations in $\dgZOpc$ are the maps which are injective in all degrees.
% 	\item The fibrations are those morphisms that have the right-lifting property with respect to the acyclic cofibrations.
% 	\item The model structure is cofibrantly generated. The generating cofibrations can be taken to be the cofibrations between overall (in all arities together) finite dimensional dg cooperads. The generating acyclic cofibrations can be taken to be the cofibrations between arity-bounded cooperads of totally at most countable dimension.
% \end{itemize}

% There is a Quillen adjunction 
% \[
% \tru : \dgZOpc \leftrightarrows \dgOpc  : \iota
% \]
% between the model categories of non-negatively graded and $\Z$-graded cooperads. Here the right adjoint is the obvious incusion, while the left adjoint is a truncation operation, see \cite{FWSimplicial} for details.

Next one considers the category of conilpotent dg Hopf cooperads $\dgHOpc$. One has an adjunction
\begin{align*}
\bbS \colon \CdgOpc &\leftrightarrows \CdgHOpc \colon \omega
% &
% \fS \colon \La\CdgOpc &\leftrightarrows \La\CdgHOpc \colon \omega
,
\end{align*}
with the right-adjoints the forgetful functors, forgetting the commutative algebra structures, and the left-adjoint the arity-wise application of the free commutative algebra functor.
We equip the category $\CdgHOpc$ with a model category structure via right transfer along the above adjunction.
Concretely, this means that:
\begin{itemize}
	\item The weak equivalences in $\CdgHOpc$ are the arity-wise quasi-isomorphisms.
	\item The fibrations in $\CdgHOpc$ are those morphisms that are fibrations in $\CdgOpc$.
	\item The cofibrations are all morphisms that have the left-lifting property with respect to the acyclic fibrations.
	\item The model categories are cofibrantly generated, with the generating (acyclic) cofibrations the images under $\bbS$ of the generating (acyclic) cofibrations in $\CdgOpc$.
\end{itemize}

\begin{prop}
The above distinguished classes of morphisms endow $\CdgHOpc$ with a cofibrantly generated model category structure.
\end{prop}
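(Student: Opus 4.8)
The plan is to obtain the model structure on $\CdgHOpc$ by right transfer along the adjunction $\bbS \colon \CdgOpc \leftrightarrows \CdgHOpc \colon \omega$ via Kan's transfer theorem (Theorem~\ref{thm:transfer Kan}): the model structure on $\CdgOpc$ has just been constructed, and the classes of morphisms listed above are exactly those produced by the right transfer (bearing in mind that $\omega$ does not alter underlying complexes, so that the transferred weak equivalences are the arity-wise quasi-isomorphisms). It therefore remains to verify conditions~(1) and~(2) of Theorem~\ref{thm:transfer Kan}. As in the rest of this section, nothing below uses finiteness of $\colC$, so this is simply the colored form of a result of Fresse, proved by his arguments.

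For condition~(1) I would show that $\omega$ preserves filtered colimits, which suffices for condition~(1) of Theorem~\ref{thm:transfer Kan}. Filtered colimits in $\CdgOpc$, respectively in $\CdgHOpc$, are created by the cooperad-forgetful functors to $\CdgSeq$, respectively to the category of colored symmetric sequences in dg commutative algebras (these being left adjoints to the respective cofree cooperad functors); and the forgetful functor $\dgCom\to\dgVect$ preserves filtered colimits since the free commutative algebra functor is finitary. As $\omega$ does not change the underlying colored collection, it preserves filtered colimits.

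The substantive point is condition~(2): that $\omega$ takes every relative $\bbS(J)$-cell complex to a weak equivalence, where $J$ denotes a set of generating acyclic cofibrations of $\CdgOpc$. I would deduce this from Lemma~\ref{lem:cell we criterion 2}, taking for the auxiliary model category $\catA$ the category of colored symmetric sequences in dg commutative algebras with its arity-wise model structure, transferred from colored dg symmetric sequences along the arity-wise free commutative algebra adjunction. Let $\beta\colon\CdgHOpc\to\catA$ forget the cooperad structure; being a left adjoint (to the cofree conilpotent colored Hopf cooperad functor, the Hopf analogue of $\FreeOp^c$) it preserves colimits. Put $\alpha:=\beta\circ\bbS\colon\CdgOpc\to\catA$; concretely $\alpha$ first forgets the cooperad structure and then applies the free commutative algebra functor in each arity, so $\alpha=\bbS\circ U$ with $U\colon\CdgOpc\to\CdgSeq$ the cooperad-forgetful functor. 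Here $U$ preserves acyclic cofibrations because the model structure on $\CdgOpc$ is obtained by left transfer along $U$, so that cofibrations and weak equivalences in $\CdgOpc$ are exactly those detected by $U$; and the arity-wise free commutative algebra functor $\CdgSeq\to\catA$ is left Quillen by construction of the model structure on $\catA$. Hence $\alpha$ preserves acyclic cofibrations. Finally, for a morphism $f$ of $\CdgHOpc$ the statements ``$\omega(f)$ is a weak equivalence'' and ``$\beta(f)$ is a weak equivalence'' both say precisely that $f$ is an arity-wise quasi-isomorphism of the underlying complexes. So Lemma~\ref{lem:cell we criterion 2} applies and gives condition~(2).

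I expect condition~(2), and in particular the choice of a workable $\catA$, to be the only real obstacle; with $\catA$ as above the remaining checks are formal. Alternatively one may follow Fresse and establish condition~(2) by hand: filter a relative $\bbS(J)$-cell complex by the number of attached cells, reduce to a single pushout of some $\bbS(j)$ with $j\in J$, and combine an arity-wise analysis of this pushout with the fact that over $\QQ$ the free commutative algebra functor preserves quasi-isomorphisms, itself a consequence of the Künneth theorem and the exactness of $\bbS_n$-invariants in characteristic zero.
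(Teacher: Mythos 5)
Your argument is correct. Note, however, that the paper does not actually prove this proposition: it simply refers to Fresse (\cite{ExtendedRHT,OperadHomotopyBook}) with operads replaced by colored operads, so there is no in-text proof to compare against line by line. What you have written is a self-contained verification of the transfer conditions that follows exactly the pattern the paper uses for its \emph{later} transfer results: condition (1) via smallness/preservation of filtered colimits, and condition (2) via Lemma~\ref{lem:cell we criterion 2} with $\catA$ the category of colored dg Hopf collections. The two facts you rely on --- that colimits in $\CdgHOpc$ are created in Hopf collections, and that (generating) acyclic cofibrations of $\CdgOpc$ are arity-wise acyclic cofibrations so that $\bbS$ sends them to arity-wise acyclic cofibrations of dg commutative algebras --- are precisely the facts the paper itself invokes in Proposition~\ref{prop:cofib aritywise cofib} and in the proofs of Propositions~\ref{prop:Htrip model} and~\ref{prop:HBiModc model str}, and your observation that finiteness of $\colC$ plays no role here is consistent with the paper's remarks. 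So your route is a legitimate (and arguably more transparent) reconstruction of the omitted argument rather than a divergence from it; what it buys is independence from the external citation, at the cost of having to supply the auxiliary model structure on Hopf collections explicitly.
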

We refer to \cite{ExtendedRHT,OperadHomotopyBook} (with the replacement of operads by colored operads) for the proof. We will also need:

\begin{prop}[{Colored version of \cite[Proposition 1.7]{ExtendedRHT}, \cite[Lemma II.9.3.10]{OperadHomotopyBook}}]
	\label{prop:cofib aritywise cofib}
Let $f:\POp\to \QOp$ be a cofibration (resp. acyclic cofibration) in $\CdgHOpc$. Then the induced morphisms on each arity component
$$\POp(\underline r;c)\to \QOp(\underline r;c)$$
are cofibrations (resp. acyclic cofibration) of dg commutative algebras.
\end{prop}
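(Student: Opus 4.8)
The plan is to follow the pattern of Fresse's proof of the non-colored statement. First, it suffices to treat cofibrations: if $f$ is an acyclic cofibration of $\CdgHOpc$ it is in particular a weak equivalence, hence arity-wise a quasi-isomorphism by the definition of weak equivalences in $\CdgHOpc$, so once each $f(\underline r;c)$ is known to be a cofibration of $\dgca$ it is automatically an acyclic cofibration there. Next, by the transferred model structure every cofibration of $\CdgHOpc$ is a retract of a relative $\bbS(I)$-cell complex, where $I$ is the set of generating cofibrations of $\CdgOpc$, which we may take to be cofibrations $i\colon K\to L$ between overall finite dimensional colored dg cooperads; in particular each $i(\underline r;c)$ is injective in positive degrees, i.e. a cofibration of $\dgVect$. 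Since cofibrations of $\dgca$ are closed under retracts and transfinite composition, it is enough to analyse a single pushout
\[
\QOp=\POp\sqcup_{\bbS(K)}\bbS(L)
\]
along an arbitrary morphism $\bbS(K)\to\POp$ of $\CdgHOpc$, together with transfinite compositions of such, and to show the resulting maps are arity-wise cofibrations of $\dgca$.

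Here one uses that the forgetful functor $U\colon\CdgOpc\to\CdgSeq$ is a \emph{left} adjoint (to $\FreeOp^c$), so that colimits in $\CdgOpc$ are computed arity-wise. Consequently $\bbS(i)\colon\bbS(K)\to\bbS(L)$ is, arity-wise, the map $\bbS(i(\underline r;c))$ obtained by applying the free commutative dg algebra functor $\bbS\colon\dgVect\to\dgca$ to the cofibration $i(\underline r;c)$, which --- $\bbS$ being left Quillen --- is a cofibration of $\dgca$. Therefore the whole statement reduces to the claim that arity-wise evaluation $\CdgHOpc\to\dgca$ carries the pushout $\QOp$, and transfinite compositions of such, to the corresponding pushout (colimit) of commutative dg algebras: for then $\POp(\underline r;c)\to\QOp(\underline r;c)$ is a pushout in $\dgca$ of a cofibration, hence a cofibration.

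This colimit-preservation claim is the main obstacle, and is precisely where the colored analogue of Fresse's computation enters. The strategy is to view $\CdgHOpc$ as the category of commutative monoid objects in $(\CdgOpc,\otimes)$ for the arity-wise tensor product, which preserves colimits in each variable because both $\otimes$ and colimits in $\CdgOpc$ are arity-wise, and then to invoke the standard explicit description of pushouts of free maps of commutative monoids: in characteristic zero, $\QOp$ is a sequential colimit of layers obtained from $\POp$ by successively and freely adjoining (schematically) the symmetric powers of $\mathrm{coker}(i)$. Each such layer is built using only $\otimes$ and colimits in $\CdgOpc$, hence is computed arity-wise, and evaluating at $(\underline r;c)$ yields exactly the analogous construction in $\dgca$ with inputs $\POp(\underline r;c)$ and $\bbS(i(\underline r;c))$, namely $\POp(\underline r;c)\sqcup_{\bbS(K(\underline r;c))}\bbS(L(\underline r;c))$; transfinite compositions are treated the same way. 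The genuinely technical part --- keeping track of the cooperadic decorations through the symmetric-power filtration and handling the possibly non-split inclusion $i$ --- is exactly as in Fresse's book and carries over to the colored setting without essential change. Assembling these observations proves the statement for cofibrations, and, as noted at the outset, the acyclic-cofibration case follows.
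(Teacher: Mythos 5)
Your proposal is correct and its skeleton agrees with the paper's: reduce to the generating (acyclic) cofibrations $\bbS(I)$, $\bbS(J)$, observe these are arity-wise (acyclic) cofibrations of $\dgca$ because $\bbS$ is computed arity-wise by the free commutative algebra functor, and then propagate the property through cell attachments, transfinite compositions and retracts. The divergence is in how the crucial step --- that the relevant colimits in $\CdgHOpc$ are computed arity-wise in $\dgca$ --- is justified. The paper simply invokes that \emph{all} colimits in $\CdgHOpc$ are created in colored dg Hopf collections; this is immediate because the forgetful functor from conilpotent cooperads (in any suitable symmetric monoidal category, here $\dgca$ with $\otimes$) to symmetric sequences is a left adjoint to the cofree cooperad functor, hence preserves colimits. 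You instead treat this as "the main obstacle" and reprove the special case of pushouts along free maps by regarding $\CdgHOpc$ as commutative monoids in $(\CdgOpc,\otimes)$ and running the symmetric-power filtration for pushouts of commutative monoids. That argument is valid (the layers are built from $\otimes$ and colimits in $\CdgOpc$, both arity-wise, so arity-wise evaluation preserves them), but it is considerably heavier than necessary here; it is the argument one is genuinely forced into on the \emph{operad} side, where the forgetful functor is a right adjoint and colimits of (Hopf) operads are not computed on underlying collections. On the cooperad side you get the colimit statement for free, which is what makes the paper's proof four lines long. Your reduction of the acyclic case to the plain case via "cofibration plus quasi-isomorphism equals acyclic cofibration in $\dgca$" is a small simplification over the paper's parallel treatment of $I$ and $J$, and is fine.
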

\begin{proof}
This holds for the generating (acyclic) cofibrations $I$ (resp. $J$), which are obtained by applying the free commutative algebra functor to cofibrations of dg vector spaces.
Moreover, colimits in $\CdgHOpc$ are created in the category of (colored) dg Hopf collections. Hence all relative $I$-cell complexes (resp- $J$-cell complexes) in $\CdgHOpc$ also have the property of the proposition. But any (acyclic) cofibration is a retract of such a cell complex, see \cite[Proposition II.4.2.1(b)]{OperadHomotopyBook}, and hence is also an arity-wise cofibration of dg commutative algebras.
\end{proof}

\subsection{Rational homotopy theory}\label{sec:col rht op}
Next consider the Quillen adjunction 
\[
\G :  \dgCom \leftrightarrows \sSet^{op}  : \DGOmega
\]
of rational homotopy theory, where 
\begin{equation}\label{equ:G def}
	\G(-)=\Hom_{\sSet}(-, \Omega(\Delta^\bullet))
\end{equation}
and 
\begin{equation}\label{equ:Omega def}
	\DGOmega(-)=\Hom_{\dgCom}(-, \Omega(\Delta^\bullet))
\end{equation}
are Sullivan's piecewise polynomial differential forms, with $\Omega(\Delta^n)=\mathbb Q[t_1,\dots,t_n,dt_1,dt_n]$ the polynomial differential forms on the $n$-simplex.
The functor $\G$ is symmetric monoidal and extends aritywise to a functor from dg Hopf cooperads to simplicial operads. Following \cite{ExtendedRHT} one can define a Quillen right adjoint
\begin{align}\label{equ:G Omega adj op}
\G :  \CdgHOpc &\leftrightarrows \CsSetOp^{op}  : \DGOmega_\sharp
% &
% \G :  \La\CdgHOpc &\leftrightarrows \La\CsSetOp^{op}  : \DGOmega_\sharp
\end{align}
using the adjoint functor theorem, or an explicit construction. One has a comparison morphism $\DGOmega_\sharp(\POp)(\underline r;c) \to \DGOmega(\POp(\underline r; c))$ in each arity, for $\POp$ a colored simplicial operad.
One can furthermore show that $\DGOmega_\sharp$ is an operadic upgrade of the functor $\DGOmega$ in the following sense.

\begin{thm}[colored version of Theorem 2.3 of \cite{ExtendedRHT}] \label{thm:comparison colored}
	Let $\colC$ be a finite poset of colors. 
	Let $\POp$ be a cofibrant object of $\CsSetOp$ 
	% (resp. $\La\CsSetOp$) 
	such that the following holds:
	\begin{itemize}
		\item For all $c\in C$, $\POp(c;c)$ is connected. 
		\item For all $c,d\in \colC$ we have that $\POp(c;d) =\emptyset$, unless $c\leq d$.
		\item Each $\POp(\underline r;c)$ is of finite rational homology type. 
	\end{itemize}	
	Then the comparison morphisms $\DGOmega_\sharp(\POp)(\underline r;c) \to \DGOmega(\POp(\underline r;c))$ are weak equivalences in each arity.
\end{thm}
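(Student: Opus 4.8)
The statement is the colored analogue of Theorem 2.3 of \cite{ExtendedRHT}, so the strategy is to reduce to the non-colored result by a cell-induction argument on the cofibrant operad $\POp$. First I would observe that since $\POp$ is cofibrant in $\CsSetOp$, it is a retract of a cellular (i.e.\ relative $\FreeOp(I)$-cell) complex built from the initial colored operad, where $I$ is the set of generating cofibrations of $\CsSetSeq$; and since the comparison maps and the property of being a weak equivalence are stable under retracts, it suffices to treat the case where $\POp$ is itself such a cell complex. The hypotheses on $\POp$ (connectivity of the $\POp(c;c)$, the poset condition $\POp(c;d)=\emptyset$ unless $c\le d$, finite rational homology type in each arity) are the colored replacements for the ``$\Lambda$''-type conditions in the non-colored case and are precisely what makes the inductive comparison on the free operad generated by a symmetric sequence work: on a free colored operad $\FreeOp(M)$ each arity component $\FreeOp(M)(\underline r;c)$ is an explicit colimit (coproduct over decorated trees with colored edges) of products of the $M(\underline s; d)$, so $\G$ and $\DGOmega_\sharp$, both of which are built from monoidal data that is compatible with these tree-colimits, can be compared arity-wise using that $\G$ is symmetric monoidal and commutes with the relevant (co)limits, and that $\DGOmega$ takes finite products to tensor products up to weak equivalence under the finiteness and connectivity hypotheses.

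Concretely the induction is over the cellular filtration $\POp = \mathrm{colim}_\alpha \POp_\alpha$, where each $\POp_{\alpha+1}$ is obtained from $\POp_\alpha$ by a pushout along a map $\FreeOp(\partial\Delta^n \otimes e_{(\underline r;c)}) \to \FreeOp(\Delta^n\otimes e_{(\underline r;c)})$, $e_{(\underline r;c)}$ the generator of the colored symmetric sequence concentrated in arity $(\underline r;c)$. The inductive hypothesis is that the comparison $\DGOmega_\sharp(\POp_\alpha)(\underline s;d)\to\DGOmega(\POp_\alpha(\underline s;d))$ is a weak equivalence in every arity, and one checks it is preserved under the pushout and under transfinite composition. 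For the pushout step one analyzes how the arity components change: attaching a cell only affects $\POp_\alpha(\underline s;d)$ by gluing in tree-shaped pieces involving the new operation, and because the edges of a tree carry colors subject to $c\le d$ the poset condition guarantees this gluing process terminates (only finitely many trees contribute in each fixed arity), so each arity component of $\POp_{\alpha+1}$ is again of finite rational homology type with the connectivity inherited. Then the compatibility of $\G$, $\DGOmega_\sharp$ and $\DGOmega$ with pushouts of spaces/cdgas and with the monoidal structure reduces the comparison for $\POp_{\alpha+1}$ to that for $\POp_\alpha$ together with the known non-colored comparison for the free pieces; the transfinite composition step is immediate because both sides take filtered colimits of such maps to filtered colimits and weak equivalences are closed under those here.

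**Main obstacle.** The genuinely new point compared with \cite{ExtendedRHT} is bookkeeping the colors: one must verify that the poset hypothesis $\POp(c;d)=\emptyset$ unless $c\le d$ is an invariant of the cellular construction (it is, provided the generating cells are only attached in ``allowed'' bidegrees $(\underline r;c)$ with the inputs' colors $\le c$, which one can arrange since $\POp$ satisfies the condition and the condition is detected on arity components), and that this is exactly what is needed so that in each fixed arity $(\underline s;d)$ only finitely many colored trees contribute — this is what makes ``finite rational homology type in each arity'' propagate, and it is the analogue of the nilpotence/$\Lambda$ condition that is essential for Sullivan's $\DGOmega$ to behave well on products. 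I expect the bulk of the work (as in the non-colored proof) to be the verification that $\DGOmega_\sharp$ computed on a free colored cooperad agrees arity-wise with the expected tree-wise formula and hence with $\DGOmega$ applied to the free simplicial operad, i.e.\ the base case of the induction; everything else is then formal model-categorical propagation along the cell attachments. One should also take care that the non-colored comparison of \cite[Theorem 2.3]{ExtendedRHT} is applied to the individual spaces $\POp(\underline r;c)$, which requires knowing each is nilpotent of finite $\QQ$-type, and this again follows from the connectivity of the $\POp(c;c)$ together with the poset condition forcing the monodromy/fundamental-group actions to be unipotent — a point worth stating explicitly but not one that presents a real difficulty beyond the non-colored case.
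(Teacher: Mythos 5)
Your proposal follows essentially the same route as the paper: the paper's proof simply imports the argument of \cite[Theorem 2.3]{ExtendedRHT} verbatim with operads replaced by $\colC$-colored operads, and isolates as the only genuinely new point exactly the issue you identify, namely that only finitely many colored trees contribute in each fixed arity, which is guaranteed by $\colC$ being a finite poset together with the vanishing of $\POp(c;d)$ for $c\not\le d$. The one refinement worth making explicit is that the finiteness of contributing trees is per arity \emph{and} cohomological degree, with the connectivity of the $\POp(c;c)$ needed to control chains of unary operations of constant color.
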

\begin{proof}
The proof is identical to that of \cite[Theorem 2.3]{ExtendedRHT}, except that one replaces operads by $\colC$-colored operads. The only caveat is that in the free operad construction appearing in the proof one has to make sure that only finitely many trees contribute to the cohomology in each fixed arity and cohomological degree. This is ensured by our condition that $\colC$ is a finite poset and the vanishing condition on the unary operations. 
\end{proof}

One can apply the functor $\DGOmega_\sharp$ to extend the rational homotopy theory of spaces (simplicial sets) to that of simplicial operads. More concretely, for a cofibrant simplicial colored operad $\QOp$ one defines the rationalization
\[
\QOp^{\QQ} = \DGL\G_\bullet \DGOmega_\sharp(\QOp)
\]
using the left derived functor of the functor $\G$, i.e., $\QOp^{\QQ}\simeq \G(\AOp)$, for $\AOp$ a cofibrant resolution of $\DGOmega_\sharp(\QOp)$. If $\POp$ is another cofibrant simplicial operad it then follows from the adjunction relation that one has a weak equivalence of derived mapping spaces
\[
\Map^h_{\CsSetOp}(\POp,\QOp^{\QQ}) \simeq
\Map^h_{\CdgHOpc}(\DGOmega_\sharp(\QOp),\DGOmega_\sharp(\POp)) \, .
\]

\subsection{Extension to $\La$ operads}
\subsubsection{Model structures on colored $\La$ operads}
The category $\colC\La$ of section \ref{sec:La operads} is a generalized Reedy category.
We may hence endow the category of colored $\La$ sequences $\La\CsSetSeq$ with the Reedy model structure. This can be checked to be the same as the one obtained by left transfer along the forgetful adjunction 
\[
 	\La\CsSetSeq \leftrightarrows \CsSetSeq,
\]
see \cite[II.8.3]{OperadHomotopyBook}.
Concretely, the classes of distinguished morphisms in $\La\CsSetSeq$ are the following:
\begin{itemize}
\item A morphism $f$ is a weak equivalence iff it is an arity-wise weak equivalence on the level of simplicial sets.
\item A morphism is a cofibration iff the underlying map of simplicial colored symmetric sequences is a cofibrations with respect to the projective model structure on $\CsSetSeq$.
\item A morphism is a fibration if it has the right-lifting property with respect to the acyclic cofibrations.
\item The model category structure is cofibrantly generated, see \cite[II.8.3.10]{OperadHomotopyBook}. 
\end{itemize}

Next consider the category of colored $\La$ operads $\La\CsSetOp$. We endow it with a cofibrantly generated model structure by right transfer along the adjunction 
\begin{equation}\label{equ:La op adj}
\FreeOp
\colon 
\La\CsSetSeq
\leftrightarrows
\La\CsSetOp
\colon 
U
\end{equation}
with $U$ the forgetful functor and $\FreeOp$ the free operad functor.
We refer to \cite[II.8.4]{OperadHomotopyBook} for the verification that this defines a valid cofibrantly generated model category structure on $\La\CsSetOp$. 
We call this model structure the Reedy model structure.

Alternatively, one could also consider the projective model structure on $\La\CsSetOp$, 
which is also obtained by transfer along the adjunction \eqref{equ:La op adj}, but where we now equip $\La\CsSetOp$ with the projective model structure.

\subsubsection{Model structure on colored $\La$ Hopf cooperads}
The forgetful functor $U$ from colored dg Hopf $\La$ cooperads to colored dg Hopf cooperads has a left adjoint
\[
(-)\otimes \La \colon 
\CdgHOpc 
\leftrightarrows 
\La\CdgHOpc 
\colon 
U.	
\]
We endow $\La\CdgHOpc$ with a cofibrantly generated model category structure via transfer along this adjunction.
We refer the reader to \cite[II.11.4]{OperadHomotopyBook} for the verification that this construction is valid.

For later reference we need:
\begin{prop}\label{prop:cofib aritywise cofib La}
	Let $f:\POp\to \QOp$ be a cofibration in $\La\CdgHOpc$. Then the induced morphisms on each arity component
	$$\POp(\underline r;c)\to \QOp(\underline r;c)$$
	are cofibrations of dg commutative algebras.
\end{prop}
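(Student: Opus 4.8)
The plan is to reproduce the proof of Proposition~\ref{prop:cofib aritywise cofib}, the only additional ingredient being the behaviour of the functor $(-)\otimes\La\colon\CdgHOpc\to\La\CdgHOpc$ on arity components. Fix an arity $(\underline r;c)$ and write $E_{\underline r;c}\colon\La\CdgHOpc\to\dgCom$ for the composite of the forgetful functor to colored dg Hopf $\La$-collections with evaluation at $(\underline r;c)$; the proposition asserts precisely that each $E_{\underline r;c}$ sends cofibrations to cofibrations of dg commutative algebras. As in the non-$\La$ case (cf.\ the proof of Proposition~\ref{prop:cofib aritywise cofib} and \cite[\S II.11]{OperadHomotopyBook}), colimits in $\La\CdgHOpc$ are created in colored dg Hopf $\La$-collections, where they are formed arity-wise; hence $E_{\underline r;c}$ preserves all colimits. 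Since the model structure on $\La\CdgHOpc$ is obtained by transfer along $(-)\otimes\La\dashv U$, its generating cofibrations are the maps $i\otimes\La$ with $i$ a generating cofibration of $\CdgHOpc$, and every cofibration of $\La\CdgHOpc$ is a retract of a relative $(I\otimes\La)$-cell complex \cite[Proposition~II.4.2.1(b)]{OperadHomotopyBook}. As cofibrations of dg commutative algebras are stable under pushouts, transfinite compositions and retracts, and $E_{\underline r;c}$ preserves colimits, it suffices to prove that each $E_{\underline r;c}(i\otimes\La)$ is a cofibration of dg commutative algebras.

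By the proof of Proposition~\ref{prop:cofib aritywise cofib}, each generating cofibration $i\in I$ is, in every arity, obtained by applying the free commutative algebra functor to a cofibration of dg vector spaces, hence is an arity-wise cofibration of dg commutative algebras. It therefore remains to check that $(-)\otimes\La$ preserves arity-wise cofibrations. For this one unwinds the explicit description of $(-)\otimes\La$ on arities from \cite[\S II.11]{OperadHomotopyBook}: the dg commutative algebra $(\COp\otimes\La)(\underline r;c)$ is assembled from the algebras $\COp(\underline s;c)$, $\underline s\leq\underline r$, by a finite succession of coproducts (tensor products) and pushouts along the $\La$-corestriction maps. Since $\colC$ is finite there are only finitely many $\underline s\leq\underline r$, so this is a genuinely finite construction, and finite tensor products of cofibrations of dg commutative algebras are again cofibrations (being iterated cobase changes along coproduct inclusions), as are the relevant pushouts. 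Hence $E_{\underline r;c}(i\otimes\La)$ is a cofibration whenever $i$ is an arity-wise cofibration, and the proposition follows.

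The one step that is not a formal consequence of Proposition~\ref{prop:cofib aritywise cofib} --- and the place where I expect the actual work --- is this last one: extracting from Fresse's construction of $(-)\otimes\La$ that on arities it is built by finitely many coproducts and pushouts out of the arity components of the input, with finiteness of the color set $\colC$ being essential. Note that, in contrast to Proposition~\ref{prop:cofib aritywise cofib}, we neither need nor claim the analogous statement for acyclic cofibrations, so no $J$-version of the above is required.
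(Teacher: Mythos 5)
Your global reduction -- colimits in $\La\CdgHOpc$ are created arity-wise, every cofibration is a retract of a relative $(I\otimes\La)$-cell complex, and cofibrations of dg commutative algebras are stable under pushout, transfinite composition and retract -- is sound and faithfully parallels the proof of Proposition \ref{prop:cofib aritywise cofib}. However, the step you yourself flag as ``where the actual work is,'' namely that $(-)\otimes\La$ takes arity-wise cofibrations to arity-wise cofibrations, is exactly the point that is neither proved nor correctly reduced, and your sketch of it does not close. First, the description of $(\COp\otimes\La)(\underline r;c)$ as assembled ``by coproducts and pushouts along the $\La$-corestriction maps'' is asserted, not established. Second, even granting such a description, the map induced on pushouts by a natural transformation of span diagrams whose components are cofibrations is \emph{not} in general a cofibration: the stability properties you invoke cover cobase change along a single cofibration, whereas comparing two pushout squares requires a cube-lemma/Reedy-type hypothesis on latching maps. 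So the argument has a genuine gap at its only nontrivial point. (If one identifies $(-)\otimes\La$ arity-wise with the left Kan extension along the inclusion of the underlying symmetric-sequence indexing groupoid into $\colC\La$, the relevant comma categories are finite groupoids with free actions, and the formula degenerates to a finite tensor product of copies of the $\COp(\underline s;c)$ with $\underline s\le\underline r$ -- no pushouts at all -- in which case your tensor-product observation would suffice; but this identification is precisely what would have to be checked against Fresse's construction, and you do not do so.)

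The paper's own proof avoids this entirely: it invokes (the colored analog of) \cite[Theorem II.11.4.12]{OperadHomotopyBook}, which states that the forgetful functor $U\colon\La\CdgHOpc\to\CdgHOpc$ preserves (acyclic) cofibrations, and then applies Proposition \ref{prop:cofib aritywise cofib} to $U(f)$. That two-line reduction requires no analysis of the left adjoint $(-)\otimes\La$ whatsoever. If you wish to keep a self-contained cell-complex argument, the missing ingredient is an explicit computation of $(-)\otimes\La$ on arity components together with a verification that the resulting construction preserves cofibrations of dg commutative algebras; otherwise, citing Fresse's theorem on $U$ is the shorter and safer route.
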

\begin{proof}
This follows from (the colored analog of) \cite[Theorem II.11.4.12]{OperadHomotopyBook}, which asserts that the forgetful functor $U$ above preserves (acyclic) cofibrations, together with Proposition \ref{prop:cofib aritywise cofib}.
\end{proof}

\subsubsection{Rational homotopy theory of colored $\La$ operads}

Analogously to section \ref{sec:col rht op} above we have an adjunction 
\[
\DGOmega_\sharp \colon 
\La\CsSetOp
\leftrightarrows
(\La\CdgHOpc)^{op}
\colon 
\G,
\]
with $\G$ the arity-wise application of the functor $\G$ of \eqref{equ:G def}.
The left-adjoint agrees with (lifts) the functor $\DGOmega_\sharp$ on non-$\La$ operads of \eqref{equ:G Omega adj op}, see \cite[Theorem 4.3]{ExtendedRHT}.
The adjunction above is furthermore a Quillen adjunction if we equip $\La\CsSetOp$ with the Reedy model structure, see \cite[Proposition 4.4]{ExtendedRHT}.
We also note that this property holds as well (a fortiori) for the projective model structure on $\La\CsSetOp$.

Furthermore, since the functor $\DGOmega_\sharp$ defined for $\La$ operads agrees with that for plain operads, we have that the comparison morphisms
\[
	\DGOmega_\sharp(\POp)(\underline r;c)
	\to 
	\DGOmega(\POp(\underline r;c))
\]
are weak equivalences, provided the conditions of Theorem \ref{thm:comparison colored} are satisfied.

\subsection{Simplicial enrichment}
\label{sec:enrichment}
\begin{prop}[Fresse]
The categories of colored simplicial operads $\CsSetOp$ and of colored simplicial $\La$ operads $\La\CsSetOp$ are simplicial model categories. 
\end{prop}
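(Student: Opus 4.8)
The plan is to establish the two structural ingredients of a simplicial model category --- a compatible $\sSet$-enrichment with tensoring and cotensoring, together with Quillen's pushout-product axiom SM7 --- by reducing everything along the forgetful functors to the categories of colored symmetric sequences, where the relevant facts are standard. The key observations are that $U\colon \CsSetOp\to\CsSetSeq$ (and likewise $U\colon\La\CsSetOp\to\La\CsSetSeq$) is monadic, hence creates finite limits; that, by the very definition of the transferred model structures recalled above, $p$ is a (resp.\ acyclic) fibration in $\CsSetOp$ if and only if $U(p)$ is a (resp.\ acyclic) fibration in $\CsSetSeq$; and that $U$ will also create cotensors by simplicial sets. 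A soft general lemma then transfers a simplicial model structure from the base to the total category.

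First I would construct the cotensor. For a simplicial colored operad $X$ and a simplicial set $K$, let $X^K$ be the arity-wise cartesian exponential, $X^K(\underline r;c):=X(\underline r;c)^K$. Since $(-)^K\colon\sSet\to\sSet$ preserves finite products it sends the operadic composition maps of $X$ to composition maps on $X^K$, and the operad axioms are inherited; thus $X^K$ is again a colored operad, $U(X^K)=U(X)^K$, and $U$ creates this cotensor. The mapping space is then $\Map_{\CsSetOp}(X,Y)_n:=\Mor_{\CsSetOp}(X,Y^{\Delta^n})$, with simplicial structure induced by the cosimplicial object $\Delta^\bullet$, and the tensor $K\otimes X$ is obtained as the left adjoint of $(-)^K$, which exists by the adjoint functor theorem since $\CsSetOp$ is locally presentable and $(-)^K$ preserves all limits. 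The enriched-category axioms and the two-variable adjunction relating $\otimes$, $\Map$ and $(-)^{(-)}$ then follow from the corresponding facts in $\sSet$ and $\CsSetSeq$ together with the compatibility of $U$ with limits and cotensors; I expect this to be routine bookkeeping.

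The substantive step is SM7, which I would verify in its adjoint (cotensor) form: for a cofibration $i\colon A\to B$ of simplicial sets and a fibration $p\colon X\to Y$ in $\CsSetOp$, the pullback-corner map $X^B\to X^A\times_{Y^A}Y^B$ must be a fibration in $\CsSetOp$, which is moreover a weak equivalence if $i$ or $p$ is. Applying $U$ and using that it creates cotensors and finite limits, this corner map is carried to the corner map $U(X)^B\to U(X)^A\times_{U(Y)^A}U(Y)^B$ in $\CsSetSeq$ associated with $i$ and the fibration $U(p)$. Now $\CsSetSeq$ with the projective model structure is a simplicial model category --- it is a category of $\sSet$-valued functors on a groupoid, with tensor and cotensor computed arity-wise --- so this corner map is a fibration in $\CsSetSeq$, acyclic whenever $i$ or $U(p)$ is. Since being a (acyclic) fibration in $\CsSetOp$ is detected by $U$, the original corner map is a(n acyclic) fibration in $\CsSetOp$, which is exactly SM7.

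For colored $\La$ operads the argument is identical, now reducing along $U\colon\La\CsSetOp\to\La\CsSetSeq$: the arity-wise cartesian exponential again lifts to a cotensor, because the $\La$-structure maps $\POp(\underline r;c)\to\POp(\underline s;c)$ are natural and hence survive application of $(-)^K$; $U$ is monadic so it creates finite limits, and it detects fibrations and acyclic fibrations by definition of the transfer; and $\La\CsSetSeq$ with the Reedy model structure is a simplicial model category, being the Reedy structure on $\sSet$-valued presheaves over the generalized Reedy category $\colC\La$ (equivalently, a transfer thereof). The same reduction then yields SM7, and the reasoning applies verbatim with the projective model structure on $\La\CsSetOp$ in place of the Reedy one. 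The only genuinely non-formal inputs are that $\sSet$ is a simplicial model category and the standard fact that presheaf categories with the projective, resp.\ Reedy, model structure are simplicial; the point that requires care is the verification that $U$ creates the cotensor by simplicial sets, since this is precisely what makes the SM7 corner maps computable at the level of symmetric sequences.
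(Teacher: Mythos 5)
Your proposal is correct and takes essentially the same route as the paper's proof, which simply cites Fresse's non-colored argument and sketches the same three points: the cotensor is the arity-wise function object $\POp^K(\underline r;c)=\POp(\underline r;c)^K$, the pullback-corner axiom follows by reduction along the forgetful functor from the corresponding statement for simplicial sets (arity-wise), and the tensoring is defined by adjunction. Your write-up merely makes explicit the bookkeeping (monadicity of $U$, creation of cotensors and detection of fibrations, the Reedy case over the generalized Reedy category $\colC\La$) that the paper delegates to the citation.
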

\begin{proof}
The non-colored version of the statement can be found in \cite[section II.8.5.6]{OperadHomotopyBook}, the verification for the colored case is identical.
We just briefly recall here that the function object $\POp^K$ associated to a simplicial set $K$ and a colored operad $\POp$ is defined such that 
\[
	\POp^K(\underline r;c) = \POp(\underline r;c)^K
\]
agrees with the corresponding function object on simplicial sets. 
The pullback-corner axiom then follows from the corresponding statement for simplicial sets.
The tensoring $\POp\otimes K$ is defined by adjunction.
\end{proof}

We cannot show that the category of colored dg Hopf cooperads $\CdgHOpc$ is a simplicial model category. However, following \cite{FWSimplicial} it at least carries a weaker structure, that suffices to state an explicit model for the mapping spaces. We define these models for the mapping spaces (simplical sets) such that 
\[
	\Map(\COp, \DOp)_n := 
	\Mor_{\CdgHOpc_{\Omega(\Delta^n)}}(\COp\otimes \Omega(\Delta^n), \DOp\otimes \Omega(\Delta^n))
\]
where $\CdgHOpc_{\Omega(\Delta^n)}$ denotes the category of colored dg Hopf cooperads over the dg commutative algebra $\Omega(\Delta^n)$ and 
\begin{align*}
	(\COp\otimes \Omega(\Delta^n)) (\underline r; c) := \COp(\underline r; c)\otimes \Omega(\Delta^n).
\end{align*}

The face and degeneracy maps on $\Map(\COp, \DOp)_\bullet$ 
are induced by the face and degeneracy maps of the simplicial dg commutative algebra $\Omega(\Delta^\bullet)$ 
in the evident manner. 
The mapping spaces thus defined inherit natural composition morphisms, and we have $\Map(\COp, \DOp)_0=\Mor_{\CdgHOpc}(\COp, \DOp)$.
The mapping spaces defined above hence define a simplicial enrichment of the category $\CdgHOpc$.
Furthermore, one has an adjunction relation, see \cite[Proposition 2]{FWSimplicial},
\begin{equation}\label{equ:map adjunction}
\Mor_{\sSet}(K, \Map(\COp, \DOp))
\cong 
\Mor_{\CdgHOpc}(\COp, \DOp^K),
\end{equation}
with $\DOp^K$ a dg Hopf cooperad naturally associated to the dg Hopf cooperad $\DOp$ and the simplicial set $K$. To define $\DOp^K$ one uses that any dg Hopf cooperad $\DOp$ can be written as a reflexive equalizer of cofree objects
\[
\DOp \cong \mathrm{eq}\left(
\begin{tikzcd}
\FreeOp^c(\DOp) \ar[shift left]{r}\ar[shift right]{r}& 
\FreeOp^c(\FreeOp^c(\DOp))  \ar[bend right]{l}
\end{tikzcd}
\right).
\]
One then defines 
\[
\DOp^K \cong \mathrm{eq}\left(
\begin{tikzcd}
\FreeOp^c(\DOp\otimes \Omega(K)) \ar[shift left]{r}\ar[shift right]{r}& 
\FreeOp^c(\FreeOp^c(\DOp)\otimes \Omega(K))  \ar[bend right]{l}
\end{tikzcd}
\right).
\]
This construction satisfies the pullback-corner axiom (see \cite[Proposition 3]{FWSimplicial}): For a cofibration of simplicial sets $i:K\to L$ and a fibration of dg Hopf cooperads $p:\COp\to \DOp$ the induced morphism 
\[
\COp^L \to \DOp^L \times_{\DOp^K} \COp^K
\]
is a fibration in $\CdgHOpc$, and a weak equivalence if either $i$ or $p$ is a weak equivalence.
The pullback-corner axiom is the main axiom of a simplicial model category. However, the simplicial model category axioms would also require that the functor $\DOp\mapsto \DOp^K$ is a right adjoint. This is not true in our case since it does not preserve arbitrary limits, although it preserves finite limits, cf. \cite[Section 1]{FWSimplicial}.

The analogous construction also defines simplicial mapping spaces on the category $\La\CdgHOpc$. 

Generally, recall that in any model category $\catC$ one can define mapping spaces (simplicial sets) between objects $\AOp$ and $\BOp$ by taking simplicial or cosimplicial framings. Concretely, for $\BOp_\bullet$ a simplicial framing of $\BOp$ the model categorical mapping space may be defined as the simplicial set
\[
\Mor_{\catC}(\AOp, \BOp_\bullet).
\]
We refer to \cite[Chapter II.3]{OperadHomotopyBook} for a recollection of the general theory.
Now, if the category $\catC$ has a simplicial cotensor structure $(-)^K$, which satisfies the pullback-corner axiom, then $\BOp^{\Delta^\bullet}$ is a simplicial framing for the object $\BOp$.
Hence in our situation one directly obtains:

\begin{cor}[{Colored verion of \cite[Theorem 4 and Proposition 31]{FWSimplicial}}]
The definitions above yield models for (i.e., are weakly equivalent to) the mapping spaces in the model categories $\CdgHOpc$ and $\La\CdgHOpc$. 
\end{cor}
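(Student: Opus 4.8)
The plan is to deduce the corollary as a direct consequence of the general model-categorical fact recalled just before the statement: in any model category $\catC$, if $\catC$ carries a simplicial cotensor $(-)^K$ satisfying the pullback-corner axiom, then $\BOp^{\Delta^\bullet}$ is a simplicial framing of $\BOp$, and hence $\Mor_{\catC}(\AOp, \BOp^{\Delta^\bullet})$ computes the model-categorical mapping space $\Map^h_{\catC}(\AOp,\BOp)$ (up to the usual (co)fibrant replacements, which I would note but not belabor). The two things to supply are therefore: (1) the pullback-corner axiom for the cotensor $\DOp\mapsto \DOp^K$ on $\CdgHOpc$, and likewise on $\La\CdgHOpc$; and (2) the identification of $\Map(\COp,\DOp)_\bullet$ as defined via $\Mor_{\CdgHOpc_{\Omega(\Delta^n)}}(\COp\otimes\Omega(\Delta^n),\DOp\otimes\Omega(\Delta^n))$ with $\Mor_{\CdgHOpc}(\COp, \DOp^{\Delta^\bullet})$.

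For (1), the pullback-corner axiom for $(-)^K$ on $\CdgHOpc$ is exactly the property quoted in the excerpt (``This construction satisfies the pullback-corner axiom, see \cite[Proposition 3]{FWSimplicial}''), so in the colored setting I would simply observe that the construction of $\DOp^K$ via the reflexive equalizer of cofree cooperads, and the verification of the pullback-corner property, go through verbatim for $\colC$-colored dg Hopf cooperads — the only input being the cofree cooperad functor $\FreeOp^c$, the free commutative algebra functor arity-wise, and the Sullivan forms $\Omega(K)$, all of which are unchanged by the coloring. The $\La$-variant is handled by the same argument, noting (as the excerpt already does) that ``the analogous construction also defines simplicial mapping spaces on the category $\La\CdgHOpc$''; here one additionally uses that the forgetful functor $\La\CdgHOpc\to\CdgHOpc$ preserves and detects (acyclic) (co)fibrations, cf. Proposition \ref{prop:cofib aritywise cofib La}, so the pullback-corner property descends.

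For (2), I would spell out the adjunction chain: by definition of $\DOp^K$ via the equalizer and the cofree–forgetful adjunction $(U,\FreeOp^c)$ together with the free-commutative-algebra adjunction $(\bbS,\omega)$, one has the natural bijection $\Mor_{\CdgHOpc}(\COp,\DOp^{\Delta^n})\cong \Mor_{\CdgHOpc_{\Omega(\Delta^n)}}(\COp\otimes\Omega(\Delta^n),\DOp\otimes\Omega(\Delta^n)) = \Map(\COp,\DOp)_n$, compatibly with the cosimplicial structure coming from $\Omega(\Delta^\bullet)$; this is precisely the adjunction relation \eqref{equ:map adjunction} evaluated on $K=\Delta^n$ read in simplicial degree, and is the colored analog of \cite[Proposition 2 and Proposition 31]{FWSimplicial}. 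Combining this identification with (1), $\Map(\COp,\DOp)_\bullet = \Mor_{\CdgHOpc}(\COp,\DOp^{\Delta^\bullet})$ is the mapping space built from the simplicial framing $\DOp^{\Delta^\bullet}$ of a fibrant $\DOp$, applied to a cofibrant $\COp$ — hence weakly equivalent to $\Map^h_{\CdgHOpc}(\COp,\DOp)$, and the same for $\La\CdgHOpc$.

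The step I expect to be the only genuine (if minor) obstacle is checking that none of the finiteness or combinatorial subtleties that appear elsewhere in the colored generalization — e.g.\ the ``finitely many trees'' caveat in the proof of Theorem \ref{thm:comparison colored} — intrude here: they do not, because the equalizer presentation of $\DOp^K$ and the pullback-corner verification are formal and do not involve summing over trees or counting cohomology in a fixed arity. So the proof really is just: \emph{quote \cite{FWSimplicial}, observe the colored and $\La$ variants are verbatim the same, and assemble the adjunctions}; I would keep the written proof correspondingly short, referring the reader to \cite[Theorem 4, Proposition 31]{FWSimplicial} for the details that transfer unchanged.
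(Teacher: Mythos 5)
Your proposal is correct and follows essentially the same route as the paper: the paper's proof simply defers to \cite[Theorem 4 and Proposition 31]{FWSimplicial} with colored (co)operads substituted for (co)operads, relying on exactly the ingredients you assemble — the adjunction relation \eqref{equ:map adjunction} identifying $\Map(\COp,\DOp)_\bullet$ with $\Mor_{\CdgHOpc}(\COp,\DOp^{\Delta^\bullet})$, the pullback-corner axiom for $(-)^K$, and the general fact that a cotensor satisfying that axiom produces simplicial framings. Your added remark that no tree-finiteness issues intrude here is accurate but not something the paper bothers to record.
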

\begin{proof}
The proof is identical to that of \cite[Theorem 4 and Proposition 31]{FWSimplicial}, with the substitution of colored (co)operads for (co)operads.
\end{proof}

% One can furthermore show (\cite[Theorem 3.1]{ExtendedRHT}) 
% that if $\QOp$ satisfies the conditions of the Theorem above and each $\QOp(\underline r;c)$ is connected then $\QOp^{\QQ}(\underline r;c)\simeq \QOp(\underline r;c)^{\QQ}$ is weakly equivalent to the standard rationalization. 

\section{Rational homotopy theory of triples}
\label{sec:triples}

\subsection{Model category structures}
We next consider triples $(\POp, \MOp, \QOp)$ consisting of two simplicial operads $\POp$ and $\QOp$ and a $\POp$-$\QOp$-operadic bimodule $\MOp$.
We denote the category of such objects $\sSetTrip$. 
% Parallely, we consider the $\La$ variant $\La\sSetTrip$, whose objects are triples $(\POp, \MOp, \QOp)$ of two simplicial $\La$ operads $\POp$ and $\QOp$ and a $\POp$-$\QOp$-operadic $\La$ bimodule $\MOp$.

We have a free/forgetful adjunction
\begin{align}\label{equ:tripadjunctions}
	\FreeOp \colon \sSetSeq^3 &\leftrightarrows \sSetTrip \colon U.
	% &
	% \FreeOp_\La \colon \La\sSetSeq^3 &\leftrightarrows \La\sSetTrip \colon U.
\end{align}
Here the forgetful functor $U$ associates to a triple $(\POp, \MOp, \QOp)$ the corresponding triple $(\POp, \MOp, \QOp)$ of the underlying symmetric sequences in simplicial sets.
The free functor associates to a triple $(\AOp, \BOp, \COp)$ of symmetric sequences the triple 
\begin{align*}
	\FreeOp(\AOp, \BOp, \COp) &= (\FreeOp(\AOp), \FreeOp_{\FreeOp(\AOp),\FreeOp(\COp)}\BOp, \FreeOp(\COp))
	% &
	% \FreeOp_\La(\AOp, \BOp, \COp) &= (\FreeOp_\La(\AOp), \FreeOp_{\FreeOp_\La(\AOp)-\FreeOp_\La(\COp)}\BOp, \FreeOp_\La(\COp))
\end{align*}
consisting of two free operads and a free bimodule of these operads.

We equip the category $\sSetTrip$ with a cofibrantly generated model category structure via transfer along the adjunction \eqref{equ:tripadjunctions}. Concretely, this means that:
\begin{itemize}
	\item The weak equivalences in $\sSetTrip$ are the morphisms that are arity-wise weak equivalences of simplicial sets.
	\item The fibrations in $\sSetTrip$ are those morphisms that are arity-wise fibrations of simplicial sets. 
	\item The cofibrations are the morphisms that have the left-lifting property with respect to the acyclic fibrations.
	\item The model category structure is cofibrantly generated, with the generating (acyclic) cofibrations the images under $\FreeOp$ of the generating (acyclic) cofibrations in $\sSetSeq^3$.
\end{itemize}

\begin{prop}\label{prop:model cat trip}
The above distinguished morphisms equip the category $\sSetTrip$ with a cofibrantly generated model category structure.
\end{prop}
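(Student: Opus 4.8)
The plan is to apply Kan's transfer theorem (Theorem~\ref{thm:transfer Kan}) to the free/forgetful adjunction $\FreeOp\colon \sSetSeq^3 \leftrightarrows \sSetTrip \colon U$, where $\sSetSeq^3$ carries the cofibrantly generated product of the projective model structures on $\sSetSeq$, so that weak equivalences and fibrations in $\sSetSeq^3$ are detected arity-wise; with this choice the transferred classes of morphisms are exactly the ones listed before the proposition, and the generating (acyclic) cofibrations are the $\FreeOp(i)$, resp.\ $\FreeOp(j)$, as claimed. Two conditions then need to be verified.

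Condition~(1) (smallness) I would dispatch via the remark in Theorem~\ref{thm:transfer Kan}: it suffices that $U$ preserves filtered colimits. This holds because $\sSetTrip$ is monadic over $\sSetSeq^3$ and the free-triple monad is accessible: the free operad functor and the plethysm product $\circ$ (which governs the free bimodule) are, arity by arity, (at most countable) colimits of constructions involving only finitely many input arities, hence commute with filtered colimits. Thus $U$ creates filtered colimits, and the domains of $\FreeOp(I)$ and $\FreeOp(J)$ are small. The substance is therefore condition~(2): that $U$ carries every relative $\FreeOp(J)$-cell complex in $\sSetTrip$ to an arity-wise weak equivalence of simplicial sets, $J$ being the generating acyclic cofibrations of $\sSetSeq^3$.

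For condition~(2) I would invoke Lemma~\ref{lem:cell we criterion}, taking as auxiliary model category the category $\CsSetOp$ of $\colC$-colored simplicial operads for $\colC=\{\mathrm{I},\mathrm{II}\}$, with the model structure of Section~\ref{sec:operad rht}. The functor $\iota\colon\sSetTrip\to\CsSetOp$ is the encoding of a triple $(\POp,\MOp,\QOp)$ as a two-colored operad described in the introduction ($\POp$ the operations of input and output color~I, $\QOp$ those of input and output color~II, $\MOp$ those of input color~II and output color~I, and the remaining color-I-output operations the forced composites of a $\POp$-operation with $\MOp$-operations and color-I identities plugged into its inputs). One checks that $\iota$ is fully faithful, exhibiting $\sSetTrip$ as a coreflective subcategory of $\CsSetOp$ whose coreflector $\pi$ sends $\ROp$ to the triple $\bigl(\ROp(\bullet,0;\mathrm{I}),\ \ROp(0,\bullet;\mathrm{I}),\ \ROp(0,\bullet;\mathrm{II})\bigr)$; in particular $\pi\iota=\id$. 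Since $\pi$ merely reads off arity components it preserves weak equivalences, and $R':=U\pi$ is right Quillen (it carries arity-wise fibrations, resp.\ acyclic fibrations, of colored operads to arity-wise fibrations, resp.\ acyclic fibrations, in $\sSetSeq^3$). Hence $(L',R'):=(\iota\FreeOp,\,U\pi)$ is a Quillen adjunction which, together with $(\iota,\pi)$ and $(\FreeOp,U)$, forms the commutative square of adjunctions that Lemma~\ref{lem:cell we criterion} requires. The lemma yields condition~(2), and Theorem~\ref{thm:transfer Kan} then produces the asserted cofibrantly generated model structure.

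The step I expect to carry the real weight is the set-up of the coreflective embedding $\iota$: one must check that the two-colored operad attached to a triple is well defined and functorial — in particular that its mixed operations are genuinely determined by $\POp$, $\MOp$ and the bimodule structure maps, that operadic associativity for it amounts precisely to the bimodule axioms, and that the resulting $\iota$ is full and faithful with the stated coreflector. Everything downstream (that $\pi$ and $U\pi$ preserve weak equivalences, that $(\iota\FreeOp,U\pi)$ is Quillen) is then formal, since all the relevant classes of morphisms in $\sSetTrip$, $\CsSetOp$ and $\sSetSeq^3$ are detected arity-wise. (Alternatively one can verify condition~(2) directly, by the standard tree-filtration analysis of pushouts along $\FreeOp(j)$ as in \cite[II.8]{OperadHomotopyBook}, but routing through colored operads and Lemma~\ref{lem:cell we criterion} is shorter.)
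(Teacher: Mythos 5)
Your proof is correct, but it takes a different route from the paper's. The paper disposes of Proposition~\ref{prop:model cat trip} in one line, by observing that triples $(\POp,\MOp,\QOp)$ are algebras over a suitable two-colored simplicial operad and citing the general admissibility theorem of Berger--Moerdijk \cite[Theorem 2.1]{BMColored}. You instead run Kan's transfer theorem by hand: smallness via accessibility of the free-triple monad, and condition (2) via Lemma~\ref{lem:cell we criterion} applied to the coreflective embedding $\iota\colon\sSetTrip\to\CsSetOp$, leaning on the already-established model structure on two-colored simplicial operads. This is a legitimate alternative --- indeed it is exactly the strategy the paper itself uses later for the Reedy model structure on $\La\sSetTrip$, where no off-the-shelf citation is available --- and all the hypotheses of Lemma~\ref{lem:cell we criterion} check out as you say ($\pi\iota=\id$, $R'=U\pi$ reads off arity components and hence is right Quillen and preserves weak equivalences). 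What the paper's citation buys is brevity and independence from the colored-operad machinery at this point in the text; what your argument buys is self-containedness within the paper's own framework and a uniform treatment with the $\La$ and Hopf variants. Your closing caveat about where the weight sits is apt: the construction and coreflectivity of $\iota$ is precisely what the paper verifies in the subsection immediately following the proposition, so nothing in your argument is circular --- though if one follows your route, that subsection should logically precede the proposition rather than follow it.
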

\begin{proof}
	This is a special case of \cite[Theorem 2.1]{BMColored}, since the above triples are algebras over a suitable colored operad.
\end{proof}

\subsection{$\sSetTrip$ as coreflective subcategory}

The category $\sSetTrip$ can be realized as a coreflective subcategory of the category of two-colored operads 
\begin{align}\label{equ:Trip Col adj}
	\iota \colon \sSetTrip &\leftrightarrows \CsSetOp \colon \pi
	% &
	% \iota \colon \La\sSetTrip &\leftrightarrows \La\CsSetOp \colon F
	,
\end{align}
where $\colC=\{I,II\}$ is the two-element set.
The right-adjoint functor is defined such that 
\[
	\pi(\AOp) = (\POp, \MOp, \QOp)
\]
with 
\begin{align*}
	\POp(r) &= \AOp(r,0; I) 
	&
	\QOp(r) &= \AOp(0,r; II) 
	&
	\MOp(r) &= \AOp(0,r; I),
\end{align*}
equipped with the operadic and bimodule structures obtained by restriction of the operadic composition in $\AOp$. 

The left-adjoint functor $\iota$ is the two colored operad
\[
	\iota(\POp, \MOp, \QOp)
	= 
	\FreeOp(\POp\sqcup \MOp\sqcup \QOp)/\sim,
\]
with $\POp$ considered in arity $(-,0,I)$, $\QOp$ considered in arity $(0,-,II)$ and $\MOp$ considered in arity $(0,-,I)$, and with the relations derived from those on $\POp$, $\QOp$ and $\MOp$.
In particular, we have that 
\[
\iota(\POp, \MOp, \QOp)(r_I, r_{II};c)
=
\begin{cases}
	\POp(r_I) & \text{for $c=I$, $r_{II}=0$} \\
	\QOp(r_{II}) & \text{for $c=II$, $r_{I}=0$} \\
	\MOp(r_{II}) & \text{for $c=I$, $r_{I}=0$}
\end{cases},
\]
so that it is clear that $F\circ \iota =\mathit{id}$.
The inclusion $\iota$ is furthermore fully faithful, so that the
adjunction \eqref{equ:Trip Col adj} realizes the category $\sSetTrip$ as a coreflective subcategory of $\CsSetOp$.
% , and $\La\sSetTrip$ as a reflective subcategory of $\La\CsSetOp$.
% The following Lemma shows that the model structures on these subcategories are just the restricted model structures on the ambient model categories of two-colored operads in the sense of section \ref{sec:restriction}.

\begin{lemm}
The adjunction \eqref{equ:Trip Col adj} is Quillen.
Furthermore, the inclusion $\iota$ is homotopically fully faithful, i.e., for any $X,Y\in \sSetTrip$ we have 
\[
\Map^h_{\sSetTrip}(X, Y)
\simeq 
\Map^h_{\CsSetOp}(L\iota(X), L\iota(Y)).
\]
% A morphism $f$ in $\sSetTrip$ is a cofibration (resp. fibration, weak equivalence) iff $\iota(f)$ is a cofibration (resp. fibration, weak equivalence) in $\CsSetOp$.
\end{lemm}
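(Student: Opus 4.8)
The plan is to establish the two assertions separately, both by reducing to the general machinery developed earlier in the excerpt. For the first assertion, that the adjunction \eqref{equ:Trip Col adj} is Quillen, I would invoke Lemma \ref{lem:restr adj}, or rather its dual form: we have just exhibited $(\iota,\pi)$ as the inclusion of a coreflective subcategory, and the model structure on $\sSetTrip$ was obtained by right transfer along $\FreeOp\colon \sSetSeq^3 \leftrightarrows \sSetTrip$, while the model structure on $\CsSetOp$ was obtained by right transfer along $\FreeOp\colon \CsSetSeq\leftrightarrows\CsSetOp$. Since weak equivalences and fibrations in both categories are detected arity-wise on the level of simplicial sets, and $\pi$ is manifestly given arity-wise by picking out certain components $\AOp(r,0;I)$, $\AOp(0,r;II)$, $\AOp(0,r;I)$, the functor $\pi$ preserves and reflects fibrations and acyclic fibrations. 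Hence $\iota$ preserves cofibrations and acyclic cofibrations, which is precisely the statement that $(\iota,\pi)$ is a Quillen adjunction. (Equivalently one observes that $\pi$, being a right adjoint that detects fibrations and weak equivalences, is automatically right Quillen.)

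For the second assertion I would apply Lemma \ref{lem:fully faithful} in its coreflective form: that lemma says that if $\catD\subset\catC$ is a coreflective subcategory with $\pi$ the right adjoint to the inclusion $\iota$, the adjunction $(\iota,\pi)$ is Quillen, and $\pi$ preserves weak equivalences, then the inclusion is homotopically fully faithful. We have just checked the Quillen property. For the hypothesis that $\pi$ preserves weak equivalences: a weak equivalence in $\CsSetOp$ is an arity-wise weak equivalence of simplicial sets, and $\pi$ sends such a map to the collection of its components in arities $(r,0;I)$, $(0,r;II)$, $(0,r;I)$, each of which is again a weak equivalence of simplicial sets; hence $\pi(f)$ is a weak equivalence in $\sSetTrip$. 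So Lemma \ref{lem:fully faithful} applies and yields
\[
\Map^h_{\sSetTrip}(X,Y)\simeq \Map^h_{\CsSetOp}(\iota^h(X),\iota^h(Y)),
\]
which is the claimed equivalence once one notes that the derived functor $\iota^h$ of the left adjoint $\iota$ is computed by $L\iota$, i.e. by applying $\iota$ to a cofibrant replacement.

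The only genuine point requiring care — and the step I expect to be the mild obstacle — is verifying that $\pi$ really does preserve and reflect fibrations and weak equivalences with respect to the transferred model structures, as opposed to only doing so for the underlying arity-wise structure. Concretely one must check that the identification of $\pi(\AOp)$ with a triple of symmetric sequences, equipped with restricted operadic composition, is compatible with the fact that weak equivalences and fibrations in $\sSetTrip$ are defined arity-wise; but this is immediate since the forgetful functor $U\colon\sSetTrip\to\sSetSeq^3$ factors the arity-wise evaluation, and $U\circ\pi$ agrees with the corresponding arity-selection functor $\CsSetOp\to\sSetSeq^3$. Once this bookkeeping is in place, both assertions follow formally from Lemmas \ref{lem:restr adj} and \ref{lem:fully faithful}. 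One should also remark that since all objects of $\sSetTrip$ are fibrant (fibrations being arity-wise Kan fibrations, and every simplicial set mapping to the point), the derived mapping space on the left may be computed using a cofibrant replacement of $X$ and $Y$ directly, matching the statement.
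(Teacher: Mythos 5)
Your proposal is correct and follows essentially the same route as the paper: observe that fibrations and weak equivalences in both $\sSetTrip$ and $\CsSetOp$ are arity-wise notions on simplicial sets, so the right adjoint $\pi$ preserves them and the adjunction is Quillen, and then invoke Lemma \ref{lem:fully faithful} (coreflective case) using that $\pi$ preserves weak equivalences. Your additional bookkeeping remarks are fine but not needed beyond what the paper records.
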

\begin{proof}
The fibrations (resp. weak equivalences) in both $\CsSetOp$ and $\sSetTrip$ are those morphisms that are arity-wise fibrations (resp. weak equivalences) of simplicial sets.
Hence it is obvious that the right-adjoint $\pi$ preserves those classes, and is hence right Quillen.

For the final statement on homotopical full faithfulness we invoke Lemma \ref{lem:fully faithful}, using that $\pi$ preserves weak equivalences.

\end{proof}

\subsection{Dg Hopf triples and model category structure}
We next consider the category $\dgHTripc$ of triples $(\COp, \NOp, \DOp)$ consisting of dg Hopf cooperads $\COp$ and $\DOp$ and a Hopf $\COp$-$\DOp$ bicomodule $\NOp$.
% Similarly, we consider the category $\La\dgHTripc$ whose objects are triples $(\COp, \NOp, \DOp)$ consisting of $\La$ dg Hopf cooperads $\COp$ and $\DOp$ and a $\La$ Hopf $\COp$-$\DOp$ bicomodule $\NOp$.
As in the previous section, we have an adjunction 
\begin{align}\label{equ:Trip Col adj}
	\pi \colon \CdgHOpc &\leftrightarrows \dgHTripc \colon \iota
	% &
	% F \colon \La\CdgHOpc &\leftrightarrows \La\dgHTripc \colon \iota
	,
\end{align}
with $\colC=\{I,II\}$. We have $\pi \circ \iota=\mathit{id}$, the functor $\iota$ is fully faithful, and the above adjunction realizes $\dgHTripc$ as a reflective subcategory of $\CdgHOpc$.
We define a cofibrantly generated model category structure on $\dgHTripc$ by right transfer along $\iota$.

% Concretely, this means that:
% \begin{itemize}
% 	\item The weak equivalences in $\dgHTripc$ and $\La\dgHTripc$ are the morphisms that are arity-wise quasi-isomorphisms.
% 	\item The fibrations in $\dgHTripc$ (respectively $\La\dgHTripc$) are those morphisms that are fibrations in the underlying category $\sSetSeq^3$ (respectively $\La\sSetSeq^3$).
% 	\item The cofibrations are all morphisms that have the left-lifting property with respect to the acyclic fibrations.
% 	\item The model categories are cofibrantly generated, with the generating (acyclic) cofibrations the images under $\FreeOp$ (respectively $\FreeOp_\La$) of the generating (acyclic) cofibrations in $\sSetSeq^3$ (respectively $\La\sSetSeq^3$).
% \end{itemize}

\begin{prop}\label{prop:Htrip model}
Right transfer along $\iota$ endows the category $\dgHTripc$ 
% and $\La\dgHTripc$ 
with a well-defined cofibrantly generated model category structure. The weak equivalences of this model structure are the quasi-isomorphisms.
\end{prop}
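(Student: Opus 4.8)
The plan is to obtain the model structure by Kan's transfer theorem (Theorem~\ref{thm:transfer Kan}) applied to the adjunction $\pi\colon \CdgHOpc \leftrightarrows \dgHTripc \colon \iota$, where $\iota$ is the fully faithful inclusion and $\pi$ its left adjoint; ``right transfer along $\iota$'' then means precisely that the weak equivalences and fibrations of $\dgHTripc$ are those morphisms $f$ with $\iota(f)$ a weak equivalence resp.\ fibration, and the generating (acyclic) cofibrations are $\pi(I)$ (resp.\ $\pi(J)$), for $I,J$ the generating (acyclic) cofibrations of $\CdgHOpc$. First I would identify the weak equivalences: since $\iota(\COp,\NOp,\DOp)$ is the two-colored dg Hopf cooperad whose components in the arity families $(r,0;I)$, $(0,r;II)$ and $(0,r;I)$ are $\COp(r)$, $\DOp(r)$ and $\NOp(r)$, while its remaining components are the ground ring, a morphism $f$ is sent by $\iota$ to an arity-wise quasi-isomorphism if and only if $f$ is arity-wise a quasi-isomorphism on $\COp$, $\NOp$ and $\DOp$; hence the transferred weak equivalences are exactly the arity-wise quasi-isomorphisms, as asserted.

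For condition~(1) of Theorem~\ref{thm:transfer Kan} it suffices to check that $\iota$ preserves filtered colimits. I would argue this by recalling, as in the proof of Proposition~\ref{prop:cofib aritywise cofib}, that colimits in $\CdgHOpc$ are created at the level of (colored) dg Hopf collections; since the components of $\iota(\COp,\NOp,\DOp)$ outside the three distinguished arity families are pinned to the initial dg commutative algebra (the ground ring), the essential image of $\iota$ is closed under these colimits, so colimits in $\dgHTripc$ are likewise created at the level of the underlying triples of symmetric sequences in $\dgCom$. At that level $\iota$ merely reindexes and inserts the ground ring in the remaining arities, hence commutes with filtered colimits.

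Condition~(2) — that $\iota$ sends relative $\pi(J)$-cell complexes to weak equivalences — I would deduce from Lemma~\ref{lem:cell we criterion 2}, taking $\catC=\CdgHOpc$, $\catD=\dgHTripc$, $L=\pi$, $R=\iota$, and $\catA$ the category of triples of symmetric sequences in $\dgCom$, equipped with the model structure transferred from triples of symmetric sequences of cochain complexes along the arity-wise free commutative algebra functor (so that its weak equivalences are the arity-wise quasi-isomorphisms and its generating (acyclic) cofibrations are free commutative algebras on (acyclic) cofibrations of symmetric sequences of complexes). Let $\alpha\colon\CdgHOpc\to\catA$ and $\beta\colon\dgHTripc\to\catA$ be the forgetful functors extracting the three distinguished arity families together with their commutative algebra structures. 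Then $\alpha=\beta\circ\pi$; $\beta$ preserves colimits by the discussion above; $\alpha=\beta\circ\pi$ also preserves colimits (as $\pi$ is a left adjoint) and carries the generating (acyclic) cofibrations of $\CdgHOpc$ to (acyclic) cofibrations of $\catA$, hence preserves (acyclic) cofibrations — alternatively one may invoke Proposition~\ref{prop:cofib aritywise cofib} directly; and $\iota(f)$ is a weak equivalence in $\CdgHOpc$ precisely when $\beta(f)$ is a weak equivalence in $\catA$ (both say that $f$ is an arity-wise quasi-isomorphism, since $\beta=\alpha\circ\iota$). Lemma~\ref{lem:cell we criterion 2} then supplies condition~(2), and Theorem~\ref{thm:transfer Kan} produces the cofibrantly generated model structure.

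The step I expect to require the most care is the claim that colimits in $\dgHTripc$ are created at the level of the underlying triples of dg commutative algebra collections (equivalently, that $\iota$ preserves the relevant colimits): this is the triples-analogue of Fresse's corresponding statement for dg Hopf cooperads, and it rests on an explicit description of these colimits together with the observation about the ``trivial'' arity components above; the attendant set-up of the auxiliary model structure on $\catA$ is routine. Everything else is formal given Proposition~\ref{prop:cofib aritywise cofib}, Lemma~\ref{lem:cell we criterion 2} and Theorem~\ref{thm:transfer Kan}.
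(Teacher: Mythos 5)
Your overall strategy --- Kan transfer along $(\pi,\iota)$, identification of the weak equivalences, condition (1) via smallness, condition (2) via the arity-wise dg commutative algebra picture and Proposition \ref{prop:cofib aritywise cofib} --- matches the paper's proof (which verifies condition (2) directly rather than by invoking Lemma \ref{lem:cell we criterion 2}, but that difference is cosmetic). However, there is a genuine gap: your description of the right adjoint $\iota$ is incorrect. The two-colored cooperad $\iota(\COp,\NOp,\DOp)$ does \emph{not} have the ground ring in the arities outside the three distinguished families $(r,0;I)$, $(0,r;II)$, $(0,r;I)$. Already in the simplicial picture the colored operad generated by a triple has nonempty mixed-arity components $(k,l;I)$ with $k,l>0$ (compositions of $\POp$-operations with $\MOp$-operations), and dually the right adjoint $\iota$ on Hopf triples must fill these arities with nontrivial expressions built from $\COp$, $\NOp$, $\DOp$ by tensor products, direct sums and coinvariants under finite group actions. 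Padding with the ground ring (or with $0$) does not even yield the adjoint to $\pi$: the extension of a triple morphism to the mixed arities by the trivial map would fail to commute with those cocomposition components of the source that land in tensor products of purely distinguished arities.

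This false description is used at the one genuinely non-formal point of the argument: the claim that $f$ being an arity-wise quasi-isomorphism of triples implies that $\iota(f)$ is an arity-wise quasi-isomorphism of two-colored cooperads. This is both what identifies the transferred weak equivalences as the quasi-isomorphisms and what makes your appeal to Lemma \ref{lem:cell we criterion 2} legitimate (the hypothesis that $R(f)=\iota(f)$ is a weak equivalence iff $\beta(f)$ is). The paper isolates exactly this point as Lemma \ref{lem:pre Htrip model}, and the correct proof is a K\"unneth-type argument: the mixed-arity components of $\iota(f)$ are built from the components of $f$ by tensor products, direct sums and coinvariants under finite group actions, and over $\QQ$ these operations preserve quasi-isomorphisms. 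Your verification of condition (1) leans on the same false description; it is more easily repaired by observing, as the paper does, that the domains of the generating (acyclic) cofibrations $\pi(I)$, $\pi(J)$ have at most countable total dimension and are hence countably small.
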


We will need:

\begin{lemm}\label{lem:pre Htrip model}
A morphism $f$ of $\dgHTripc$ is a quasi-isomorphism iff $\iota(f)$ is a quasi-isomorphism.
\end{lemm}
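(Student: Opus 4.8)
The statement is essentially a bookkeeping claim: the functor $\iota$ sends a triple $(\COp,\NOp,\DOp)$ to a two-colored Hopf cooperad whose arity components are precisely (copies of) the arity components of $\COp$, $\NOp$, $\DOp$, and nothing else. The plan is to make this precise and read off the equivalence. First I would recall, from the description of $\iota$ in the analogue of the formula for $\iota(\POp,\MOp,\QOp)$ in the simplicial setting, that for $\colC=\{I,II\}$ one has
\[
\iota(\COp,\NOp,\DOp)(r_I,r_{II};c)
=
\begin{cases}
\COp(r_I) & c=I,\ r_{II}=0,\\
\DOp(r_{II}) & c=II,\ r_I=0,\\
\NOp(r_{II}) & c=I,\ r_I=0,\\
0 & \text{otherwise (mixed-color arities).}
\end{cases}
\]
(The mixed arities vanish because a two-colored cooperad coming from a triple has no operations mixing the two colors strictly; this is the dual of the coreflective-subcategory picture used above, and is where the hypothesis that there are no nullary operations is used to rule out degenerate arities.) I would state this identification as the content of a short paragraph, citing the parallel discussion for $\sSetTrip$.

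Given that identification, the proof is immediate. A morphism $f=(f_\COp,f_\NOp,f_\DOp)$ of $\dgHTripc$ is by definition a quasi-isomorphism iff each of $f_\COp$, $f_\NOp$, $f_\DOp$ is an arity-wise quasi-isomorphism of dg commutative algebras. On the other hand $\iota(f)$ is a morphism of $\CdgHOpc$, which is a quasi-isomorphism iff it is an arity-wise quasi-isomorphism, i.e.\ iff $\iota(f)(r_I,r_{II};c)$ is a quasi-isomorphism for every $(r_I,r_{II};c)$. By the displayed formula, the nonzero arity components of $\iota(f)$ are exactly the arity components of $f_\COp$, $f_\NOp$, $f_\DOp$ (each appearing once), and the remaining components are the identity of the zero object, hence trivially quasi-isomorphisms. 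Therefore $\iota(f)$ is arity-wise a quasi-isomorphism iff $f_\COp$, $f_\NOp$, $f_\DOp$ are all arity-wise quasi-isomorphisms, i.e.\ iff $f$ is a quasi-isomorphism. This chain of iff's is the claim.

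The only real content — and hence the one step deserving care — is the verification that $\iota$ has exactly the arity components listed, in particular that the mixed-color arities vanish and that no arity component of $\COp$, $\NOp$ or $\DOp$ is "spread out" over several colored arities or combined with others. This follows from unwinding the definition of $\iota$ as $\FreeOp(\COp\sqcup\NOp\sqcup\DOp)/\!\sim$ with $\COp$ placed in arities $(-,0;I)$, $\DOp$ in $(0,-;II)$, $\NOp$ in $(0,-;I)$, together with the relations: any tree built from these generators either stays within one color (reproducing the cooperad relations of $\COp$ or $\DOp$), or has a single $\NOp$-vertex at the root with $\COp$-subtrees above and below in the appropriate colors (reproducing the bicomodule structure), so that after imposing the relations only the listed components survive. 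I would point the reader to the identical computation already carried out for $\sSetTrip$ above, since the argument is formally the same with dg commutative algebras in place of simplicial sets, and merely remark that the vanishing of mixed arities uses the standing assumption that operads and (co)modules have no nullary operations.
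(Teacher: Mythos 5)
Your argument rests on the claim that the mixed-colour arity components of $\iota(\COp,\NOp,\DOp)$ vanish, and that claim is false. Already in the simplicial picture that you invoke as a template, the two-coloured operad $\iota(\POp,\MOp,\QOp)=\FreeOp(\POp\sqcup\MOp\sqcup\QOp)/\!\sim$ has nontrivial components in genuinely mixed arities: take $p\in\POp(2)$ (arity $(2,0;I)$) and $m\in\MOp(3)$ (arity $(0,3;I)$) and form the partial composite $p\circ_1 m$, which lives in arity $(1,3;I)$ and is not killed by any relation, since the bimodule relations only identify composites in which \emph{all} inputs of a $\POp$-vertex are filled by elements of $\MOp\circ\QOp$. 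This is why the paper writes ``in particular'' before the displayed case distinction for $\iota(\POp,\MOp,\QOp)(r_I,r_{II};c)$: that formula describes only the pure arities, not all of them. The standing assumption of no nullary operations does not help here; the example above uses none.

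As a consequence, your ``only if'' direction (that $f$ a quasi-isomorphism implies $\iota(f)$ a quasi-isomorphism) is not established: you must control the induced map on the mixed arity components, which are not identities of zero objects. The repair is the paper's actual argument: the mixed components of $\iota(f)$ are built functorially from the components of $f_\COp$, $f_\NOp$, $f_\DOp$ by direct sums, tensor products and coinvariants under finite group actions, and each of these operations preserves quasi-isomorphisms (we are over $\QQ$), so $\iota(f)$ is an arity-wise quasi-isomorphism. Your ``if'' direction survives, and is most cleanly phrased as in the paper: $f=\pi\iota(f)$, and $\pi$ merely projects onto certain arity components, hence preserves quasi-isomorphisms.
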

\begin{proof}
	Let $f$ be a morphism of $\dgHTripc$.
	If $\iota(f)$ is a quasi-isomorphism in $\CdgHOpc$, then so is $f=\pi\iota(f)$, since $\pi$ preserves quasi-isomorphisms. Conversely, suppose that $f$ is a quasi-isomorphism.
	Note that the arity components of $\iota f$ are built from $f$ via taking tensor products, direct sums or coinvariants under finite group actions. Since these operations preserve quasi-isomorphisms, $\iota f$ is a quasi-isomorphism.
\end{proof}

\begin{proof}[Proof of Proposition \ref{prop:Htrip model}]
We use Theorem \ref{thm:transfer Kan} to check well-definedness of the model structure.
The domains of the images under $\pi$ of the generating (acyclic) cofibration of $\CdgHOpc$ are overall at most countably dimensional, and hence countably small.
Thus the first condition of Theorem \ref{thm:transfer Kan} holds.

For the second condition, we use that by Proposition \ref{prop:cofib aritywise cofib} the acyclic cofibrations $J$ of $\CdgHOpc$ are arity-wise acyclic cofibrations of dg commutative algebras.
Hence the same holds for the generating cofibrations $\pi(J)$ of $\dgHTripc$.
But since colimits in $\dgHTripc$ are created arity-wise in dg commutative algebras, it immediately follows that all $\pi(J)$-relative cell complexes are aritywise weak equivalences of dg commutative algebras. But this is the same as quasi-isomorphisms. Hence the image of any such morphism under $\iota$ is again a quasi-isomorphism by Lemma \ref{lem:pre Htrip model}, and hence a weak equivalence in $\CdgHOpc$.
This shows the second condition of Theorem \ref{thm:transfer Kan} and hence establishes the model structure on $\dgHTripc$.

The final assertion on weak equivalences follows immediately from Lemma \ref{lem:pre Htrip model}.

	% We use Theorem \ref{thm:transfer restr}. 
	% First note that the functor $\iota$ is cocontinuous, since colimits in both $\CdgHOpc$ and $\dgHTripc$ are created in the category of dg commutative algebras.
	% Next, the images of the generating cofibrations in $\CdgHOpc$ ...

	% We use the model categorial transfer theorem \cite[Theorem 4.3.3]{Frbook}, due to Kan.
	% We have to check conditions (a) and (b) of that Theorem. For (a) note that colimits in $\dgHTripc$ are created in $\CdgHOpc$ (and in fact in). But the domains of the generating morphisms are small objects in $\CdgHOpc$, and hence also in $\dgHTripc$.

	% For (b) we have to check that the images under $\iota$ of the relative $F(J)$-cell complexes (with $J$ the generating acyclic cofibrations in $\CdgHOpc$) in $\CdgHOpc$ are weak equivalences.
	% But since $\iota$ preserves and creates colimits, such images are again $\iota(F(J))$-cell complexes in $\CdgHOpc$. The proof hence concludes by noting that $\iota(F(J))$ consists of acyclic fibrations.
\end{proof}

As a direct consequence of Lemma \ref{lem:fully faithful} we then obtain:

\begin{cor}
The inclusion $\iota:\dgHTripc\to \CdgHOpc$ is homotopically fully faithful. 
\end{cor}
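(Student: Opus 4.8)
The plan is to deduce the statement directly from Lemma~\ref{lem:fully faithful}, applied to the reflective subcategory inclusion $\iota\colon \dgHTripc \hookrightarrow \CdgHOpc$ furnished by adjunction~\eqref{equ:Trip Col adj}. That lemma requires three things: that $\dgHTripc$ is a reflective subcategory of $\CdgHOpc$, that the pair $(\pi,\iota)$ is a Quillen adjunction, and that the left adjoint $\pi$ preserves weak equivalences. So the proof reduces to checking these three items, all of which are essentially already established in the preceding subsection.

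Reflectivity was recorded just above: the counit $\pi\circ\iota\Rightarrow\mathit{id}$ is an isomorphism and $\iota$ is fully faithful. For the Quillen property, I would invoke that the model structure on $\dgHTripc$ is, by Proposition~\ref{prop:Htrip model}, obtained by right transfer along $\iota$; hence, by the very definition of the transferred structure, a morphism $f$ of $\dgHTripc$ is a weak equivalence (resp.\ fibration) if and only if $\iota(f)$ is one in $\CdgHOpc$, so in particular $\iota$ preserves fibrations and acyclic fibrations and is therefore a right Quillen functor. Finally, $\pi$ preserves weak equivalences because the components of $\pi(\AOp)$ are selected arity components of $\AOp$, so $\pi$ carries an arity-wise quasi-isomorphism in $\CdgHOpc$ to an arity-wise quasi-isomorphism, which is a weak equivalence of $\dgHTripc$ by Proposition~\ref{prop:Htrip model} (equivalently, this is Lemma~\ref{lem:pre Htrip model}).

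With these hypotheses in hand, Lemma~\ref{lem:fully faithful} yields $\Map^h_{\dgHTripc}(A,B)\simeq\Map^h_{\CdgHOpc}(\iota^h A,\iota^h B)$ for all $A,B\in\dgHTripc$, which is precisely the assertion. I do not anticipate a genuine obstacle, since every ingredient is supplied by the preceding material; the only point worth a second look is the verification that right transfer makes $\iota$ genuinely right Quillen (rather than merely weak-equivalence-preserving), but this is immediate from the fact that the transferred fibrations and acyclic fibrations are by definition reflected from $\CdgHOpc$ along $\iota$.
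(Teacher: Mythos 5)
Your proof is correct and follows exactly the paper's route: the paper likewise obtains this corollary as a direct consequence of Lemma~\ref{lem:fully faithful}, using that the model structure on $\dgHTripc$ is right-transferred along $\iota$ (so $\iota$ is right Quillen by definition of the transferred fibrations and weak equivalences) and that $\pi$ preserves weak equivalences via Lemma~\ref{lem:pre Htrip model}. You have merely made explicit the hypothesis checks that the paper leaves implicit.
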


\subsection{Rational homotopy theory of triples}

\begin{prop}\label{prop:rht triples}
	The functor $\G$ on $\dgHTripc$ obtained by arity-wise application of \eqref{equ:G def} has a left Quillen adjoint
	\begin{equation}\label{equ:rht trip adj}
		\DGOmega_\sharp \colon \sSetTrip \leftrightarrows
		(\dgHTripc)^{op} \colon \G.
    \end{equation} 
	The left-adjoint $\DGOmega_\sharp$ is obtained by restriction/projection with the functors $\iota, \pi$ of the functor $\DGOmega_\sharp$ defined on colored operads in \eqref{equ:RHT adjunction col}.
	% and
	% \[
	% 	\Omega_\sharp \colon \La\sSetTrip \leftrightarrows
	% 	(\La\dgHTripc)^{op} \colon \Omega_\sharp.
	% \] 
\end{prop}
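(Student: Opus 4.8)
The plan is to obtain the adjunction \eqref{equ:rht trip adj} by restricting the colored-operad adjunction \eqref{equ:G Omega adj op} along the (co)reflective inclusions established above, using the machinery of Lemma \ref{lem:restr adj}. Recall that $\sSetTrip$ sits in $\CsSetOp$ as a coreflective subcategory via $\iota \colon \sSetTrip \leftrightarrows \CsSetOp \colon \pi$ (here $\iota$ is the left adjoint), while $\dgHTripc$ sits in $\CdgHOpc$ as a reflective subcategory via $\pi \colon \CdgHOpc \leftrightarrows \dgHTripc \colon \iota$ (here $\iota$ is the right adjoint). Taking opposite categories, $(\dgHTripc)^{op} \subset (\CdgHOpc)^{op}$ becomes a coreflective subcategory with the opposite of $\iota$ as its left adjoint. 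So we are in the situation of Lemma \ref{lem:restr adj} with $\catC = \CsSetOp$, $\catD = (\CdgHOpc)^{op}$, $\catA = \sSetTrip$, $\catB = (\dgHTripc)^{op}$, $(L,R) = (\DGOmega_\sharp, \G)$, and the vertical coreflective inclusions the (opposites of the) inclusions of triples.

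The key step is to verify the compatibility hypothesis of Lemma \ref{lem:restr adj}: that the left adjoint $L = \DGOmega_\sharp$ on colored operads sends the image of $\iota \colon \sSetTrip \to \CsSetOp$ into the image of the inclusion $(\dgHTripc)^{op} \hookrightarrow (\CdgHOpc)^{op}$, compatibly with the inclusions, i.e.\ $\DGOmega_\sharp \circ \iota = \iota^{op} \circ L'$ for some $L' \colon \sSetTrip \to (\dgHTripc)^{op}$. Concretely, this amounts to the statement that for a triple $(\POp,\MOp,\QOp)$, the colored dg Hopf cooperad $\DGOmega_\sharp(\iota(\POp,\MOp,\QOp))$ is concentrated in the two-colored arities that make it lie in the (reflective) subcategory of Hopf triples — that is, $\DGOmega_\sharp(\iota(\POp,\MOp,\QOp))(\underline r;c)$ vanishes unless $\underline r$ is of the form $(r,0)$ with $c=I$, $(0,r)$ with $c=II$, or $(0,r)$ with $c=I$. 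This follows from the explicit construction of $\DGOmega_\sharp$: since $\iota(\POp,\MOp,\QOp)$ has nonempty operations only in those arities, and $\DGOmega_\sharp$ is built (via the adjoint functor theorem / explicit cofree construction) arity-wise in a way that does not create operations in new color-arities, the cooperad $\DGOmega_\sharp(\iota(\POp,\MOp,\QOp))$ automatically has the required vanishing, hence lies in $\iota(\dgHTripc)$. Alternatively, one argues at the level of the colored sequences underlying the free cooperad construction, noting that only trees whose leaves and root respect the three allowed color patterns contribute.

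Given this, Lemma \ref{lem:restr adj} produces the right adjoint $\G' := \pi \circ R \circ \iota$ (here $\pi$ is the reflection $\CdgHOpc \to \dgHTripc$ read in the opposite category, i.e.\ on the simplicial side the coreflection $\pi \colon \CsSetOp \to \sSetTrip$) and a commuting square of adjunctions. One then checks that this $\G'$ is precisely the arity-wise application of \eqref{equ:G def}: since $\G$ on colored operads is arity-wise $\G$, and the coreflection $\pi$ on the simplicial side just reads off the relevant arity components, the composite $\pi \circ \G \circ \iota$ returns exactly $(\COp,\NOp,\DOp) \mapsto (\G(\COp(-)), \G(\NOp(-)), \G(\DOp(-)))$, which is the arity-wise $\G$. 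Finally, the Quillen property: Lemma \ref{lem:restr adj} grants it once we know $(L,R)$ is Quillen (this is \eqref{equ:G Omega adj op}), the adjunction $(\iota,\pi)$ on the $\CsSetOp$ side is Quillen (immediate, as both $\iota$ and $\pi$ preserve arity-wise weak equivalences and fibrations), and $\iota_{\catB}$ — here the inclusion $(\dgHTripc)^{op} \hookrightarrow (\CdgHOpc)^{op}$ — creates (acyclic) cofibrations, i.e.\ dually that the reflection inclusion $\dgHTripc \to \CdgHOpc$ creates (acyclic) fibrations; but fibrations and weak equivalences in $\dgHTripc$ were defined by right transfer along $\iota$ in Proposition \ref{prop:Htrip model}, so this holds essentially by definition.

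I expect the main obstacle to be the verification of the compatibility $\DGOmega_\sharp \circ \iota = \iota^{op} \circ L'$, i.e.\ that $\DGOmega_\sharp$ of a two-colored operad coming from a triple really stays supported in the three admissible color-arity patterns. Because $\DGOmega_\sharp$ is only characterized abstractly (adjoint functor theorem) or via a somewhat involved explicit model, one must be a little careful: the cleanest route is probably to observe that the full subcategory of $\CsSetOp$ of operads supported in those arities is itself coreflective (it is $\iota(\sSetTrip)$), and to check directly on the generating cofibrations and the cofree-cooperad building blocks that $\DGOmega_\sharp$ preserves this support condition, then pass to general objects by colimits and retracts as in the proof of Proposition \ref{prop:cofib aritywise cofib}. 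The remaining bookkeeping — identifying $\G'$ with arity-wise $\G$ and invoking the Quillen clause of Lemma \ref{lem:restr adj} — is routine.
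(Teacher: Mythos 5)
Your overall strategy---restrict the colored adjunction \eqref{equ:G Omega adj op} along the (co)reflective inclusions using Lemma \ref{lem:restr adj}---is the right framework, and your identification of $\G'=\pi\circ\G\circ\iota$ with the arity-wise $\G$ and your verification of the Quillen clause are fine. But there is a genuine gap exactly where you locate the ``main obstacle'': Lemma \ref{lem:restr adj} \emph{assumes} the existence of the lift $L'$ with $\DGOmega_\sharp\circ\iota=\iota\circ L'$, and your proposed verification of this hypothesis rests on a false premise. You claim that $\iota(\POp,\MOp,\QOp)$ is supported only in the arity patterns $(r,0;I)$, $(0,r;II)$, $(0,r;I)$, and that the image of $\iota\colon\dgHTripc\to\CdgHOpc$ is likewise cut out by a support condition. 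This is not so: $\iota(\POp,\MOp,\QOp)=\FreeOp(\POp\sqcup\MOp\sqcup\QOp)/\sim$ has nontrivial components in mixed arities $(r_I,r_{II};I)$ with $r_I,r_{II}>0$, generated by \emph{partial} compositions of $\POp$-operations with $\MOp$-operations (the displayed case distinction in the paper is prefaced by ``in particular'' and only describes three of the arities). Dually, the right adjoint $\iota$ on Hopf triples is a cofree-type completion whose mixed-arity components do not vanish, so membership in its essential image is not a support condition, and $\DGOmega_\sharp(\iota(\POp,\MOp,\QOp))$ certainly does not vanish in mixed arities. Verifying directly that it lies in the essential image of $\iota$ essentially requires already knowing the functor $L'$ you are trying to construct.

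The paper circumvents this circularity by reversing the order of the argument: it first constructs the left adjoint $\DGOmega_\sharp^{trip}$ outright, via the adjoint functor lifting theorem applied to the square over the monadic forgetful functors $\sSetTrip\to\sSetSeq^3$ and $\dgHTripc\to(\dgca\Seq)^3$, and only \emph{then} deduces the compatibility with $\iota$ and $\pi$. Indeed, once both horizontal left adjoints exist, the commutativity $\G\pi=\pi\G$ of the square of right adjoints is immediate (both $\pi$'s just project to arity components and $\G$ is arity-wise), whence $\iota\,\DGOmega_\sharp^{trip}=\DGOmega_\sharp\,\iota$ by uniqueness of adjoints, and $\DGOmega_\sharp^{trip}=\pi\,\DGOmega_\sharp\,\iota$ after applying $\pi$. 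You should either adopt this route, or find an honest argument that $\DGOmega_\sharp(\iota(\POp,\MOp,\QOp))$ lies in the essential image of $\iota$ that does not appeal to a support condition; as written, that step of your proof fails.
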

\begin{proof}
The existence of the adjoint functor follows from the adjoint functor lifting theorem \cite[Theorem II.4.5.6]{Borceux}, applied to the square 
\[
    \begin{tikzcd}
        \sSetTrip 
        \ar[shift left, dashed]{r}{\DGOmega_\sharp}
        \ar[shift left]{d}{U}
        &
        (\dgHTripc)^{op}
        \ar[shift left]{l}{\G}
        \ar[shift left]{d}{U}
        \\
        \sSetSeq^3
        \ar[shift left]{r}{\DGOmega}
        \ar[shift left]{u}{\FreeOp}
        &
        ((\dgca\Seq)^3)^{op}
        \ar[shift left]{l}{\G}
        \ar[shift left]{u}{\FreeOp^c}
    \end{tikzcd},
\]
Note that the vertical downwards (forgetful) functors are monadic, and the lower horizontal arrow $\G$ has a left adjoint, hence so does the upper horizontal arrow by the adjoint functor lifting theorem.

Our functor then fits into a square of adjunctions
\begin{equation}\label{equ:omega square}
\begin{tikzcd}
	\sSetTrip 
	\ar[shift left, dashed]{r}{\Omega_\sharp^{trip}}
	\ar[shift left]{d}{\iota}
	&
	(\dgHTripc)^{op}
	\ar[shift left]{l}{\G}
	\ar[shift left]{d}{\iota}
	\\
	\CsSetOp
	\ar[shift left]{r}{\Omega_\sharp}
	\ar[shift left]{u}{\pi}
	&
	(\CdgHOpc)^{op}
	\ar[shift left]{l}{\G}
	\ar[shift left]{u}{\pi}
\end{tikzcd},
\end{equation}
where we have temporarily renamed our new left-adjoint functor to $\DGOmega_\sharp^{trip}$, to distinguish it from the functor $\DGOmega_\sharp$ for colored operads in the bottom row.
It is obvious that the diagram of right adjoints commutes,
\[
\G \pi = \pi \G,
\]
since $\G$ is an arity-wise application of a functor and $\pi$ just projects to suitable arity-components.
Hence it follows that the diagram of left-adjoints also commutes, i.e.,
\[
	\iota \Omega_\sharp^{trip} = \Omega_\sharp \iota.
\]
Applying $\pi$ from the left we obtain 
\[
	\Omega_\sharp^{trip} = \pi \Omega_\sharp \iota
\]
as desired.
% Here the upper horizontal arrow $\G$ is the arity-wise application of the functor $\G$ of \eqref{}.
% 	We claim that the right adjoint equals $\Omega_\sharp = F\circ \Omega_\sharp \circ \iota$.
% 	We verify the adjunction relation:
% 	\begin{align*}
% 		\Mor_{\sSetTrip}(\XOp, \G(\YOp))
% 		&\cong 
% 		\Mor_{\CsSetOp}(\iota(\XOp), \iota(\G(\YOp)))
% 		\cong 
% 		\Mor_{\CsSetOp}(\iota(\XOp), \G(\iota(\YOp)))
% 		\\&\cong 
% 		\Mor_{\CdgHOpc}( \iota(\YOp), \Omega_\sharp(\iota(\XOp)))
% 		\cong 
% 		\Mor_{\dgHTripc}( \YOp, F(\Omega_\sharp(\iota(\XOp)))).
% 	\end{align*}

% 	The proof in the $\La$-case is analogous.
\end{proof}

\subsection{The case of $\La$ triples}
\subsubsection{Reedy model structure on  $\La\sSetTrip$}
Let $\La\sSetTrip$ be the category of triples $(\POp, \MOp, \QOp)$ consisting of two simplicial $\La$-operads $\POp$, $\QOp$ and an operadic $\POp$-$\QOp$ $\La$-bimodule.
We equip this category $\La\sSetTrip$ with a cofibrantly generated model category structure by right transfer along the free/forgetful adjunction 
\[
\FreeOp \colon (\La\sSetSeq)^3 \leftrightarrows \La\sSetTrip \colon 
U.
\]
Here we consider the left-hand model category with the Reedy model structure, and call the resulting model structure on $\La\sSetTrip$ the Reedy model structure as well. %(The Quillen equivalent projective model structure would also work for all our purposes.)

\begin{prop}
Right transfer along the above adjunction equips $\La\sSetTrip$ with a well-defined cofibrantly generated model category structure, that we call the Reedy model structure.
\end{prop}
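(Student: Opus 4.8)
The plan is to invoke the transfer criterion of Theorem \ref{thm:transfer Kan}, exactly as in the proof of Proposition \ref{prop:Htrip model} and in the non-$\La$ case of Proposition \ref{prop:model cat trip}, but now working over the Reedy model structure on $(\La\sSetSeq)^3$. First I would check condition (1): the domains of $\FreeOp(I)$ and $\FreeOp(J)$ are small with respect to the appropriate cell complexes. This follows because the forgetful functor $U\colon \La\sSetTrip \to (\La\sSetSeq)^3$ preserves filtered colimits (these colimits are computed arity-wise in simplicial sets, where every object is small), so the smallness criterion in Theorem \ref{thm:transfer Kan}(1) applies verbatim.

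The substance is condition (2): every relative $\FreeOp(J)$-cell complex in $\La\sSetTrip$ must map to a weak equivalence under $U$, i.e. be an arity-wise weak equivalence of simplicial sets. Here I would apply Lemma \ref{lem:cell we criterion 2} (or equivalently Lemma \ref{lem:cell we criterion}) to the adjunction $(\FreeOp, U)$ between $\catC = (\La\sSetSeq)^3$ (with the Reedy structure) and $\catD = \La\sSetTrip$, with $\catA = (\La\sSetSeq)^3$ and $\alpha = \id$, $\beta = U$. The hypotheses to verify are: $\alpha = \beta \circ \FreeOp$ trivially since $U\circ\FreeOp$ followed by nothing is the free-then-forget composite — more precisely, I need $\alpha$ to be chosen so that $\beta\circ\FreeOp$ recovers it, and since $U\FreeOp(\AOp,\BOp,\COp)$ is the triple of underlying $\La$-sequences of the free operads and free bimodule, I would instead take $\alpha$ to be this composite $U\circ\FreeOp$ and note it preserves acyclic cofibrations; $\beta=U$ preserves colimits (it creates them, being monadic); and $R(f)=U(f)$ is a weak equivalence iff $\beta(f)=U(f)$ is, tautologically. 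The only real content is that $U\circ\FreeOp$ sends acyclic cofibrations of the Reedy model structure on $(\La\sSetSeq)^3$ to arity-wise acyclic cofibrations of simplicial sets. This is precisely the statement that underlies the existing transfer in \cite[II.8.4]{OperadHomotopyBook} for a single $\La$-operad, and the bimodule component is no harder: a free bimodule $\FreeOp_{\FreeOp(\AOp),\FreeOp(\COp)}\BOp$ is built arity-wise as a coproduct of tensor products of (induced) simplicial sets indexed by trees with one $\BOp$-vertex, and these operations preserve acyclic cofibrations.

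Concretely, the key step I would carry out in detail is: show that if $(i_P, i_M, i_Q)$ is a generating acyclic cofibration of $(\La\sSetSeq)^3$ with the Reedy structure, then each arity component of $\FreeOp(i_P,i_M,i_Q)$ is an acyclic cofibration of simplicial sets, and that this property is closed under pushout and transfinite composition in $\La\sSetTrip$ (the latter because colimits are created arity-wise). For the operad parts this is cited from Fresse; for the bimodule part I would decompose the arity-$(\underline r; c)$ component of the free bimodule into a coproduct over decorated trees — one layer of $\POp$-operations, then a single $\MOp$-slot, then a layer of $\QOp$-operations, symmetrized appropriately — and observe that each summand is obtained from the generating acyclic cofibrations by the monoidal product and induction functors, all of which preserve arity-wise acyclic cofibrations of simplicial sets (using that $\sSet$ is a monoidal model category and that $\bbS$-equivariant induction is exact on the relevant cofibrations in the Reedy/projective setting).

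The main obstacle, such as it is, is purely bookkeeping: making the tree-decomposition of the free bimodule over $\La$-operads explicit enough to see that the $\La$-structure maps do not destroy the cofibration property, and that the Reedy (as opposed to projective) model structure on $(\La\sSetSeq)^3$ is what makes the free functor land in arity-wise cofibrations — this is exactly the subtlety Fresse handles in \cite[II.8.3--II.8.4]{OperadHomotopyBook}, so I would state the decomposition and then say the verification is identical to the single-operad case treated there, with the bimodule layer adding nothing essentially new. Thus the proof reduces to: ``Apply Theorem \ref{thm:transfer Kan}; condition (1) holds since $U$ preserves filtered colimits; condition (2) holds by Lemma \ref{lem:cell we criterion 2} with $\beta = U$ and $\alpha = U\circ\FreeOp$, together with the fact — shown exactly as in \cite[II.8.4]{OperadHomotopyBook} — that $U\circ\FreeOp$ sends Reedy acyclic cofibrations of $(\La\sSetSeq)^3$ to arity-wise acyclic cofibrations of simplicial sets.''
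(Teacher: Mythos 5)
Your overall scaffolding (apply Theorem \ref{thm:transfer Kan}; condition (1) by countable smallness; condition (2) via one of the cell-complex lemmas) matches the paper, but your verification of condition (2) has a genuine gap. You apply Lemma \ref{lem:cell we criterion 2} with $\beta = U\colon \La\sSetTrip\to(\La\sSetSeq)^3$ and justify the hypothesis that $\beta$ preserves colimits by saying it ``creates them, being monadic.'' That is false: monadic functors create limits, not colimits, and the forgetful functor from ($\La$-)operads, triples, or bimodules to symmetric sequences does not preserve pushouts or coproducts (the coproduct of two free operads is not the coproduct of the underlying sequences). The same error recurs when you assert that the arity-wise acyclic cofibration property is closed under pushout in $\La\sSetTrip$ ``because colimits are created arity-wise'' --- only filtered colimits and reflexive coequalizers are created arity-wise. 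Controlling what $U$ does to a pushout along $\FreeOp(j)$ is precisely the hard combinatorial content of any such transfer (this is why \cite[II.8.4]{OperadHomotopyBook} is nontrivial), and it cannot be reduced to ``$U\circ\FreeOp$ preserves acyclic cofibrations plus $U$ preserves colimits.''

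The paper sidesteps this by choosing the auxiliary category differently: it applies Lemma \ref{lem:cell we criterion} to the triangle with $\catA = \La\CsSetOp$, the category of colored $\La$ operads, whose transferred model structure is already established. The inclusion $\iota\colon\La\sSetTrip\to\La\CsSetOp$ is a \emph{left} adjoint and therefore genuinely preserves colimits; the composite $\iota\circ\FreeOp=\FreeOp\circ\iota$ is left Quillen, so the image under $\iota$ of a relative $\FreeOp(J)$-cell complex is a relative cell complex on acyclic cofibrations inside an honest model category, hence an acyclic cofibration there by the model category axioms, and weak equivalences are then detected by the right adjoints $\pi$ and $U$. If you want to keep your route, you would have to carry out the pushout analysis for free triples directly (one layer of $\POp$-operations, a single $\MOp$-slot, one layer of $\QOp$-operations, together with the $\La$-structure maps), which is exactly the elaborate verification the paper's detour through colored operads is designed to avoid.
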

\begin{proof}
We desire to apply Theorem \ref{thm:transfer Kan}.

Condition (1) of that theorem holds true, since the domains of the generating are built from simplicial sets with an at most countable number of simplices. Hence these objects are countably small.

For condition (2) of Theorem \ref{thm:transfer Kan} consider the diagram of adjunctions 
\begin{equation*}
	%\label{equ:omega square}
	\begin{tikzcd}
		(\La\sSetSeq)^3
		\ar[shift left]{r}{\FreeOp}
		\ar[shift left]{d}{\iota}
		&
		\La\sSetTrip
		\ar[shift left]{l}{U}
		\ar[shift left]{d}{\iota}
		\\
		\La\CsSetSeq
		\ar[shift left]{r}{\FreeOp}
		\ar[shift left]{u}{\pi}
		&
		\La\CsSetOp
		\ar[shift left]{l}{U}
		\ar[shift left]{u}{\pi}
	\end{tikzcd}.
	\end{equation*}
	The left and bottom adjunctions are Quillen. The diagram of right adjoints commutes, since obviously $\pi U = U\pi$, and hence also the diagram of left-adjoints. Furthermore, the left-hand morphism $\pi$ and the bottom morphism $U$ preserve weak equivalences.
	Hence we may apply Lemma \ref{lem:cell we criterion} to the upper-right triangle of our diagram above, to see that condition (2) of Theorem \ref{thm:transfer Kan} holds.

\end{proof}

\subsubsection{Model structure on $\La\dgHTripc$} 
We define a cofibrantly generated model category structure on $\La\dgHTripc$ by right transfer along the adjunction 
\[
	\pi \colon 
	\La\CdgHOpc \leftrightarrows \La\dgHTripc
	\colon 
	\iota.
\]
that lifts the adjunction \eqref{equ:Trip Col adj} to $\La$ dg Hopf triples.
% \[
% 	(-)\otimes \La \colon 
% 	\dgHTripc \leftrightarrows \La\dgHTripc
% 	\colon 
% 	U.
% \]
\begin{prop}\label{prop:La dgHTripc model str}
This yields a well-defined cofibrantly generated model category structure on $\La\dgHTripc$. The weak equivalences in that model structure are the quasi-isomorphisms.
\end{prop}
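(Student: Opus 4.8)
The plan is to mirror the proof of Proposition~\ref{prop:Htrip model}, carrying the argument through the extra $\La$-structure. As there, the model structure is obtained by right transfer (Theorem~\ref{thm:transfer Kan}) along the reflective adjunction $\pi \colon \La\CdgHOpc \leftrightarrows \La\dgHTripc \colon \iota$; the facts that $\pi\iota=\mathit{id}$, that $\iota$ is fully faithful, and that $\pi$ preserves quasi-isomorphisms are inherited from the non-$\La$ case together with the construction of the $\La$-lift. The first step is the $\La$-analogue of Lemma~\ref{lem:pre Htrip model}: a morphism $f$ of $\La\dgHTripc$ is a quasi-isomorphism if and only if $\iota(f)$ is one. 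One implication is immediate from $f=\pi\iota(f)$ and the fact that $\pi$ preserves quasi-isomorphisms; for the other, the arity components of $\iota(f)$ are built from those of $f$ by iterated tensor products, direct sums, and coinvariants under finite group actions, all of which preserve quasi-isomorphisms, and the $\La$-structure only introduces additional maps \emph{between} arity components, not new arity components, so the argument of Lemma~\ref{lem:pre Htrip model} applies verbatim. This already identifies the transferred weak equivalences with the arity-wise quasi-isomorphisms, which gives the last sentence of the proposition.

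Condition~(1) of Theorem~\ref{thm:transfer Kan} is then checked exactly as before: the generating (acyclic) cofibrations of $\La\CdgHOpc$ are the images under $(-)\otimes\La$ of those of $\CdgHOpc$, whose domains remain arity- and degree-wise bounded of at most countable total dimension, hence countably small; applying $\pi$ only passes to subcollections of these, so the domains of the generating (acyclic) cofibrations $\pi(I)$, $\pi(J)$ of $\La\dgHTripc$ are small with respect to the relevant cell complexes.

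For condition~(2) I would argue directly as in Proposition~\ref{prop:Htrip model} (equivalently, apply Lemma~\ref{lem:cell we criterion 2}, taking $\catA$ to be the product, over the triple arities, of the category of dg commutative algebras, with $\alpha$ the arity-wise forgetful functor and $\beta$ its analogue on $\La\dgHTripc$). By Proposition~\ref{prop:cofib aritywise cofib La} — whose proof equally yields the acyclic-cofibration statement, since the forgetful functor from colored dg Hopf $\La$ cooperads preserves acyclic cofibrations and one may then invoke the acyclic-cofibration part of Proposition~\ref{prop:cofib aritywise cofib} — the acyclic cofibrations $J$ of $\La\CdgHOpc$ are arity-wise acyclic cofibrations of dg commutative algebras, and hence so are the generating cofibrations $\pi(J)$ of $\La\dgHTripc$. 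Since colimits in $\La\dgHTripc$ are created arity-wise in dg commutative algebras, any relative $\pi(J)$-cell complex is, arity-wise, a transfinite composite of pushouts of acyclic cofibrations of dg commutative algebras, hence an arity-wise quasi-isomorphism; by the $\La$-analogue of Lemma~\ref{lem:pre Htrip model} its image under $\iota$ is then a quasi-isomorphism, i.e.\ a weak equivalence in $\La\CdgHOpc$. With both conditions verified, Theorem~\ref{thm:transfer Kan} produces the cofibrantly generated model structure, and the description of the weak equivalences was established in the first step.

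The step I expect to require the most care is the assertion that colimits in $\La\dgHTripc$ are created arity-wise in dg commutative algebras: one must check that the additional arity-lowering structure maps of a Hopf $\La$-cooperadic bicomodule do not obstruct colimits from being computed in the underlying arity-wise commutative algebras, which is the $\La$-counterpart of the (already nontrivial) input used for $\dgHTripc$ in Proposition~\ref{prop:Htrip model}. This is extracted from the structural results of Fresse on colored dg Hopf $\La$ cooperads \cite[II.11.4]{OperadHomotopyBook} and their bicomodules; everything else is a verbatim transcription of the non-$\La$ argument.
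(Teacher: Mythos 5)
Your proof follows essentially the same route as the paper's: transfer along $(\pi,\iota)$ via Theorem~\ref{thm:transfer Kan}, with condition~(1) from countable smallness, the identification of weak equivalences with quasi-isomorphisms as in Lemma~\ref{lem:pre Htrip model}, and condition~(2) from Proposition~\ref{prop:cofib aritywise cofib La} together with the fact that colimits are created arity-wise in dg commutative algebras. If anything you are slightly more careful than the paper, which cites Proposition~\ref{prop:cofib aritywise cofib La} (stated only for cofibrations) for the acyclic case without the remark you supply about its proof also covering acyclic cofibrations.
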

\begin{proof}
We invoke again Theorem \ref{thm:transfer Kan}, and check its conditions.
Condition (1) holds true since the generating (acyclic) cofibrations have domains of overall at most countable dimension, and are hence countably small.

Next, as in Lemma \ref{lem:pre Htrip model} it follows that the weak equivalences of the transfered model structure are the quasi-isomorphisms.

For condition (2) of Theorem \ref{thm:transfer Kan} we use that the generating acyclic cofibrations are the images under $\pi$ of generating acyclic cofibrations of $\La\CdgHOpc$. Hence they are in particular acyclic cofibrations of dg commutative algebras arity-wise by Proposition \ref{prop:cofib aritywise cofib La}. But colimits in $\La\dgHTripc$ are created arity-wise on the level of dg commutative algebras, and hence our relative cell complexes are again arity-wise quasi-isomorphisms, and hence weak equivalences as desired.
\end{proof}

\subsubsection{Rational homotopy theory of $\La$ triples}

\begin{prop}
We have a Quillen adjunction 
\[
\DGOmega_\sharp \colon 
\La\sSetTrip
\leftrightarrows
(\La\dgHTripc)^{op}
\colon 
\G,
\]
with $\G$ the arity-wise application of the functor $\G$ of \eqref{equ:G def}.
The left-adjoint lifts the functor $\DGOmega_\sharp$ of Proposition \ref{prop:rht triples}.
\end{prop}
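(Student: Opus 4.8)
The plan is to transcribe the proof of Proposition~\ref{prop:rht triples} with colored operads replaced by colored $\La$ operads, using the $\La$ versions of the inputs from \cite{ExtendedRHT}. First I would produce the left adjoint $\DGOmega_\sharp$ on $\La\sSetTrip$ by the adjoint functor lifting theorem \cite[Theorem II.4.5.6]{Borceux}, exactly as in Proposition~\ref{prop:rht triples}, applied to the square
\[
\begin{tikzcd}[row sep=1.2cm, column sep=2cm]
	\La\sSetTrip
	\ar[shift left, dashed]{r}{\DGOmega_\sharp}
	\ar[shift left]{d}{U}
	&
	(\La\dgHTripc)^{op}
	\ar[shift left]{l}{\G}
	\ar[shift left]{d}{U}
	\\
	(\La\sSetSeq)^3
	\ar[shift left]{r}{\DGOmega}
	\ar[shift left]{u}{\FreeOp}
	&
	((\La\dgca\Seq)^3)^{op}
	\ar[shift left]{l}{\G}
	\ar[shift left]{u}{\FreeOp^c}
\end{tikzcd},
\]
whose vertical forgetful functors are monadic and whose bottom row is the arity-wise Sullivan adjunction; hence the top $\G$ acquires a left adjoint, which I temporarily call $\DGOmega_\sharp'$.

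Next I would identify $\DGOmega_\sharp'$ as a restriction. As in Proposition~\ref{prop:rht triples} it fits into a square of adjunctions, the $\La$ analog of \eqref{equ:omega square}, with the $\La$ colored operad adjunction $\DGOmega_\sharp\colon\La\CsSetOp\leftrightarrows(\La\CdgHOpc)^{op}\colon\G$ in the bottom row and the (co)reflective embeddings $\iota$ together with their adjoints $\pi$ in the columns. The square of right adjoints commutes on the nose, $\G\pi=\pi\G$, since $\G$ is applied arity-wise and $\pi$ only selects arity components; hence the square of left adjoints commutes, $\iota\,\DGOmega_\sharp'=\DGOmega_\sharp\,\iota$, and applying $\pi$ with $\pi\iota=\mathit{id}$ gives $\DGOmega_\sharp'=\pi\,\DGOmega_\sharp\,\iota$. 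That this functor lifts the functor of Proposition~\ref{prop:rht triples} is then immediate: by \cite[Theorem 4.3]{ExtendedRHT} the functor $\DGOmega_\sharp$ on $\La$ colored operads is compatible with the one on plain colored operads under the functors forgetting the $\La$-structure, and these forgetful functors intertwine the embeddings $\iota$ and $\pi$ on both sides.

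For the Quillen property I would invoke Lemma~\ref{lem:restr adj}, with $(L,R)=(\DGOmega_\sharp,\G)$ the known Quillen adjunction on $\La$ colored operads (Reedy model structure on $\La\CsSetOp$), with $(\iota_\catA,\pi_\catA)=(\iota,\pi)$ the adjunction realizing $\La\sSetTrip$ as a coreflective subcategory of $\La\CsSetOp$, and with $\iota_\catB$ the inclusion $(\La\dgHTripc)^{op}\hookrightarrow(\La\CdgHOpc)^{op}$. The first of these is Quillen because $\iota$ takes free triples to free colored operads, hence sends generating cofibrations to cofibrations, exactly as in the non-$\La$ case; the inclusion $\iota_\catB$ is coreflective because $\La\dgHTripc$ is reflective in $\La\CdgHOpc$ (Proposition~\ref{prop:La dgHTripc model str}), and it creates (acyclic) cofibrations because, dually, the inclusion $\iota\colon\La\dgHTripc\to\La\CdgHOpc$ creates fibrations and weak equivalences by the very definition of the right-transferred model structure. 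The lifting hypothesis $L\iota_\catA=\iota_\catB L'$ required by Lemma~\ref{lem:restr adj} is precisely the commutativity of the square of left adjoints established above, so the lemma delivers simultaneously that $\G$ is right adjoint to $\DGOmega_\sharp'$ and that the pair is Quillen. (Alternatively, one can check directly that $\G$ sends an arity-wise cofibration of dg commutative algebras to an arity-wise Kan fibration, using that $\G\colon\dgCom\to\sSet^{op}$ is left Quillen together with the $\La$ analog of Proposition~\ref{prop:cofib aritywise cofib}, proved by the same retract argument from Proposition~\ref{prop:cofib aritywise cofib La}; I would lead with Lemma~\ref{lem:restr adj} since it sidesteps the need to analyze Reedy fibrations of $\La$ sequences.)

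I do not anticipate a genuine obstacle: once the $\La$-colored-operad facts of \cite{ExtendedRHT} are granted, everything is routine bookkeeping. The one point that demands attention is keeping track of opposite categories --- in particular that the reflective inclusion $\La\dgHTripc\hookrightarrow\La\CdgHOpc$ becomes a coreflective inclusion upon passage to opposite categories, so that the hypotheses of Lemma~\ref{lem:restr adj} are actually met on the cooperadic side, and dually that the Quillen direction of the adjunction $(\iota,\pi)$ on the space side is the one supplying ``$\iota_\catA$ left Quillen''.
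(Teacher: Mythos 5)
Your proposal is correct and follows essentially the same route as the paper, whose proof of this proposition is simply the instruction to repeat the argument of Proposition \ref{prop:rht triples} with $\La$ decorations: adjoint functor lifting over the free/forgetful square to produce $\DGOmega_\sharp$, commutativity of the square of right adjoints to identify it as $\pi\,\DGOmega_\sharp\,\iota$, and Lemma \ref{lem:restr adj} (which the paper deploys in the adjacent Proposition \ref{prop:bimod rht adj}) for the Quillen property. Your explicit attention to the opposite-category bookkeeping and to the Reedy versus arity-wise fibration issue only makes explicit what the paper leaves implicit.
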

\begin{proof}
This follows analogously to Proposition \ref{prop:rht triples}. 
\end{proof}

\subsection{Simplicial enrichment}
\label{sec:enrichment trip}
Finally, we remark that the simplicial enrichment of the category $\CdgHOpc$ of colored dg Hopf cooperads recalled in section \ref{sec:enrichment} can be extended to the category $\dgHTripc$ of dg Hopf triples.
Analogously to section \ref{sec:enrichment} we define the mapping spaces between triples $\SOp, \TOp\in \dgHTripc$ as the simplicial sets 
\begin{equation}\label{equ:trip map def}
\Map(\SOp, \TOp):=
\Mor_{\dgHTripc_{\Omega(\Delta^\bullet)}}
(\SOp\otimes \Omega(\Delta^\bullet), \TOp\otimes \Omega(\Delta^\bullet))
\end{equation}
by just extending our ground ring to $\Omega(\Delta^\bullet)$.
We next claim that these simplicial mapping spaces fit into an adjunction relation 
\begin{equation}\label{equ:adj map triples}
    \Mor_{\sSet}(K, \Map(\SOp, \TOp))
    \cong 
    \Mor_{\dgHTripc}(\SOp, \TOp^K),
\end{equation}
analogous to \eqref{equ:map adjunction}.
Here the construction $\TOp^K$ associated to a simplicial set $K$ and a dg Hopf triple $\TOp=(\COp, \MOp,\DOp)$ is defined as follows.
For a cofree object $\TOp=\FreeOp^c(\AOp)$ we set $\TOp^K=\FreeOp^c(\AOp\otimes\Omega(K))$.
Then an arbitrary object $\TOp$ can be written as a reflexive equalizer of free objects
\[
\TOp\cong \eq\left(
    \begin{tikzcd}
    \FreeOp^c(\TOp^0) \ar[shift left]{r}\ar[shift right]{r} &
     \FreeOp^c(\TOp^1) \ar[bend right]{l}
    \end{tikzcd}
\right)
\]
and we extend the construction $(-)^K$ so that it preserves reflexive equalizers, i.e., 
\[
\TOp:= \eq\left(
    \begin{tikzcd}
    \FreeOp^c(\TOp^0\otimes \Omega(K)) \ar[shift left]{r}\ar[shift right]{r} &
     \FreeOp^c(\TOp^1\otimes \Omega(K)) \ar[bend right]{l}
    \end{tikzcd}.
\right)
\]
To see that \eqref{equ:adj map triples} holds we just note that both sides agree for $\TOp$ a cofree object, and both sides preserve equalizers in $\TOp$.

\begin{prop}
	\label{prop:Trip Op PCA}
    \begin{enumerate}
        \item
The inclusion $\iota:\dgHTripc\to \CdgHOpc$ is compatible with the simplicial cotensor structures $(-)^K$ on both sides. That is, for any simplicial set $K$ and dg Hopf triple $\TOp\in \dgHTripc$ we have that 
\[
\iota(\TOp^K) \cong (\iota(\TOp))^K
\]
\item (Pullback-corner axiom) For a cofibration of simplicial sets $i:K\to L$ and a fibration of dg Hopf triples $p:\SOp\to \TOp$ in $\dgHTripc$ the induced morphism 
\begin{equation}\label{equ:pcp trip}
\SOp^L \to \TOp^L \times_{\TOp^K} \TOp^K
\end{equation}
is a fibration in $\dgHTripc$, and a weak equivalence if either $i$ or $p$ is a weak equivalence.
    \end{enumerate}
\end{prop}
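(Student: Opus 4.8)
The plan is to deduce both assertions from the corresponding statements for $\CdgHOpc$ recalled in Section~\ref{sec:enrichment} (following \cite{FWSimplicial}), by transporting them along the fully faithful reflective inclusion $\iota\colon \dgHTripc\to\CdgHOpc$. Two structural features of $\iota$ do the work: it is a right adjoint, hence preserves all limits, in particular reflexive equalizers and pullbacks; and the model structure on $\dgHTripc$ of Proposition~\ref{prop:Htrip model} is defined so that a morphism of $\dgHTripc$ is a fibration (resp. weak equivalence) if and only if its image under $\iota$ is a fibration (resp. weak equivalence, cf. Lemma~\ref{lem:pre Htrip model}) in $\CdgHOpc$.

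For the first assertion I would begin by recording what $\iota$ does on underlying collections: composing $\iota$ with the forgetful functor to two-colored dg Hopf collections sends a triple $(\COp,\NOp,\DOp)$ to the colored collection carrying $\COp$, $\DOp$, $\NOp$ in the ``pure'' arities $(r,0;I)$, $(0,r;II)$, $(0,r;I)$ and the terminal object in all remaining (``mixed'') arities; this is exactly the content of $\iota$ being the right adjoint to the arity-projection $\pi$. Since the cofree dg Hopf cooperad functor and the cofree dg Hopf triple functor are the right adjoints of the respective forgetful functors to collections, uniqueness of adjoints then identifies $\iota\FreeOp^c(\AOp)$ with $\FreeOp^c(\phi(\AOp))$, where $\phi$ denotes the operation of extending a triple of collections by terminal objects in the mixed arities. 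It remains to observe that $\phi$ commutes with the arity-wise tensor product $(-)\otimes\Omega(K)$ — the terminal object being annihilated by any such tensor product — and that $\iota$ preserves the reflexive equalizers through which $(-)^K$ is defined on both sides. Chaining these identifications through the two defining reflexive-equalizer presentations of $\TOp^K$ and of $(\iota\TOp)^K$, and matching the two pairs of parallel arrows (which agree because the cofree resolutions used are natural), would give $\iota(\TOp^K)\cong(\iota\TOp)^K$.

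Granting the first assertion, the second is formal. I would apply $\iota$ to the morphism \eqref{equ:pcp trip}; using that $\iota$ preserves pullbacks together with the identifications $\iota(\SOp^L)\cong(\iota\SOp)^L$, $\iota(\TOp^L)\cong(\iota\TOp)^L$, $\iota(\TOp^K)\cong(\iota\TOp)^K$ of the first assertion, one sees that $\iota$ carries \eqref{equ:pcp trip} to the pullback-corner morphism
\[
(\iota\SOp)^L \to (\iota\TOp)^L \times_{(\iota\TOp)^K} (\iota\SOp)^K
\]
associated to the cofibration $i$ and the morphism $\iota(p)$. By the definition of the model structure on $\dgHTripc$, $\iota(p)$ is a fibration of $\CdgHOpc$, and it is a weak equivalence exactly when $p$ is. The pullback-corner axiom for $\CdgHOpc$ recalled in Section~\ref{sec:enrichment} then makes the displayed morphism a fibration, and a weak equivalence whenever $i$ or $p$ is. Reading this back through the model structure on $\dgHTripc$ (whose fibrations and weak equivalences are created by $\iota$) yields precisely the claim for \eqref{equ:pcp trip}.

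The hard part is the first assertion, and more precisely the identification of $\iota$ on cofree objects: one must track carefully that $\iota$ fills in the mixed arities by terminal objects — rather than reconstructing them from trees, as happens in the (left-adjoint) operad picture of $\sSetTrip\subset\CsSetOp$ — and check that this extension-by-terminal is compatible with tensoring by $\Omega(K)$. Once that bookkeeping is settled, the rest is a formal manipulation of adjunctions and limits.
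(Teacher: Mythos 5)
Your proposal is correct and follows essentially the same route as the paper: both prove the first assertion by checking the identification on cofree objects (where $\iota\FreeOp^c(\AOp)^K=\FreeOp^c(\AOp\otimes\Omega(K))=(\iota\FreeOp^c(\AOp))^K$) and then extending to general $\TOp$ via preservation of reflexive equalizers by the right adjoint $\iota$, and both deduce the second assertion by applying $\iota$ to \eqref{equ:pcp trip} and invoking the pullback-corner axiom in $\CdgHOpc$ together with the fact that the model structure on $\dgHTripc$ is transferred along $\iota$. Your additional bookkeeping on how $\iota$ fills the mixed arities with terminal objects is a reasonable elaboration of the paper's one-line claim that $\iota$ sends cofree triples to cofree dg Hopf cooperads.
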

\begin{proof}
For the first assertion note that the inclusion functor $\iota$ sends cofree triples to cofree dg Hopf cooperads.
On such cofree objects we verify 
\[
\iota(\FreeOp^c(\AOp)^K)
=
\iota(\FreeOp^c(\AOp\otimes \Omega(K)))
=
\FreeOp^c(\AOp\otimes \Omega(K))
=
(\iota(\FreeOp^c(\AOp)))^K.
\]
Then, for general $\TOp$ the assertion then follows since $\iota$ preserves equalizers by adjunction.

For the second assertion of the proposition note that the model category structure on $\dgHTripc$ is defined by transfer along $\iota$. Hence to check that \eqref{equ:pcp trip} is an (acyclic) fibration we need to check that 
\[
   \iota( \SOp^L) \to \iota(\TOp^L \times_{\TOp^K} \TOp^K)
\]
is an (acyclic) fibration in $\CdgHOpc$. But this is true by the first assertion of the proposition and the pullback-corner axiom in $\CdgHOpc$, see section \ref{sec:enrichment}.
\end{proof}

The analogous construction and verification also remains true for the catgeory $\La \dgHTripc$ of $\La$ dg Hopf triples. 
The pullback-corner axiom implies that for any object $\TOp\in \dgHTripc$ (resp. in $\La \dgHTripc$) we have that $\TOp^{\Delta^\bullet}$ is a simplicial framing for $\TOp$. Hence we obtain that our "ad hoc" definition of mapping spaces is valid:

\begin{cor}
    For $\SOp,\TOp\in \dgHTripc$ (resp. in $\La \dgHTripc$) the object \eqref{equ:trip map def} is weakly equivalent to the mapping space between $\SOp$ and $\TOp$ defined canonically via simplicial framings in the model category $\dgHTripc$ (resp. in $\La \dgHTripc$).
\end{cor}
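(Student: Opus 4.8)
The plan is to deduce the statement from the analogous result for $\CdgHOpc$ (the Corollary at the end of Section~\ref{sec:enrichment}, itself a consequence of \cite{FWSimplicial}) by verifying that every structural ingredient entering that proof is now in place for $\dgHTripc$, respectively $\La\dgHTripc$. Concretely, I would first identify the simplicial set \eqref{equ:trip map def} with $[n]\mapsto\Mor_{\dgHTripc}(\SOp,\TOp^{\Delta^n})$. This is immediate from the adjunction \eqref{equ:adj map triples}: taking $K=\Delta^n$ and applying the Yoneda lemma gives $\Map(\SOp,\TOp)_n\cong\Mor_{\dgHTripc}(\SOp,\TOp^{\Delta^n})$, naturally in $[n]\in\Delta$, so \eqref{equ:trip map def} and $\Mor_{\dgHTripc}(\SOp,\TOp^{\Delta^\bullet})$ agree as simplicial sets.

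Next I would show that, for $\TOp$ fibrant, the cosimplicial object $[n]\mapsto\TOp^{\Delta^n}$ is a simplicial frame on $\TOp$ in the sense recalled in Section~\ref{sec:enrichment} (cf.\ \cite[Chapter~II.3]{OperadHomotopyBook}). The only input needed is Proposition~\ref{prop:Trip Op PCA}(2), the pullback-corner axiom for the cotensor $(-)^K$ on $\dgHTripc$: applied to $\partial\Delta^n\hookrightarrow\Delta^n$ it yields Reedy fibrancy of $\TOp^{\Delta^\bullet}$, and applied to $\emptyset\to\Delta^n$ it shows that $\TOp\to\TOp^{\Delta^n}$ is a weak equivalence for all $n$; these are exactly the defining properties of a simplicial frame. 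Consequently $\Mor_{\dgHTripc}(\SOp,\TOp^{\Delta^\bullet})$ is a model for the model-categorical mapping space whenever $\SOp$ is cofibrant and $\TOp$ fibrant, and the general case follows by the usual comparison with (co)fibrant replacements, using once more the pullback-corner axiom to control the effect of $(-)^{\Delta^\bullet}$ on weak equivalences between fibrant objects and the exactness of $(-)\otimes\Omega(\Delta^\bullet)$. All of this is word-for-word the argument of \cite{FWSimplicial}, with dg Hopf cooperads replaced by dg Hopf triples. For the $\La$ variant one simply observes that the $\La$-analogue of Proposition~\ref{prop:Trip Op PCA} holds (as remarked following its proof), so the same steps apply verbatim in $\La\dgHTripc$.

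The main obstacle — already isolated in Proposition~\ref{prop:Trip Op PCA} and, upstream, in \cite{FWSimplicial} — is that $\TOp\mapsto\TOp^K$ is not a genuine right adjoint (it preserves only finite, not arbitrary, limits), so $\dgHTripc$ is not a simplicial model category in the strict sense and one cannot simply quote the standard theory of mapping spaces. One must instead check by hand that $\TOp^{\Delta^\bullet}$ really is a simplicial frame, and the pullback-corner axiom of Proposition~\ref{prop:Trip Op PCA} is precisely what makes this possible. Once that proposition is granted, the remaining work is routine bookkeeping parallel to the cooperad case.
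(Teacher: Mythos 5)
Your proposal follows the paper's argument exactly: the paper likewise deduces the corollary from the pullback-corner axiom of Proposition \ref{prop:Trip Op PCA}, which guarantees that $\TOp^{\Delta^\bullet}$ is a simplicial framing of $\TOp$, combined with the adjunction \eqref{equ:adj map triples} identifying \eqref{equ:trip map def} with $\Mor_{\dgHTripc}(\SOp,\TOp^{\Delta^\bullet})$, and handles the $\La$ case by the same remark. One small correction: to see that $\TOp\to\TOp^{\Delta^n}$ is a weak equivalence you should apply the pullback-corner axiom to the acyclic cofibration $\Delta^0\hookrightarrow\Delta^n$ (a vertex inclusion), not to $\emptyset\to\Delta^n$, which only yields fibrancy of $\TOp^{\Delta^n}$.
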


\section{Rational homotopy theory of operadic bimodules}
\label{sec:bimodules}

\subsection{Model category structure}
Next, we fix two simplicial operads $\POp$, $\QOp$, and we consider the category $\BiMod_{\POp,\QOp}$ of $\POp$-$\QOp$ operadic bimodules.
% Similarly, for $\POp$, $\QOp$ $\La$-operads, we consider the category $\La\BiMod_{\POp,\QOp}$ of $\POp$-$\QOp$ operadic $\La$ bimodules.
This category comes with a free/forgetful adjunction
\begin{align}
\FreeOp_{\POp,\QOp} \colon 
\sSetSeq
&\leftrightarrows 
\BiMod_{\POp,\QOp} \colon U
% &
% \FreeOp_{\POp,\QOp} \colon \La\BiMod_{\POp,\QOp}
% &\leftrightarrows 
% \La\sSetSeq \colon U
.
\end{align}
We equip the category $\BiMod_{\POp,\QOp}$ with a cofibrantly generated model structure by right transfer along this adjunction:
\begin{itemize}
	\item The weak equivalences in $\BiMod_{\POp,\QOp}$ are the morphisms that are arity-wise weak equivalences of simplicial sets.
	\item The fibrations in $\BiMod_{\POp,\QOp}$ are those morphisms that are fibrations arity-wise on the level of simplicial sets. 
	% The fibrations in $\La\BiMod_{\POp,\QOp}$ are those morphisms that are fibrations on the level of simplicial $\La$ sequences.
	\item The cofibrations are all morphisms that have the left-lifting property with respect to the acyclic fibrations.
	\item The model category structure is cofibrantly generated, with the generating (acyclic) cofibrations the images under the free bimodule functor $\FreeOp_{\POp,\QOp}$ of the generating (acyclic) cofibrations in $\sSetSeq$.
\end{itemize}

\begin{prop}\label{prop:model cat bimod}
The above classes of distinguished morphisms define a cofibrantly generated model structure on the category $\BiMod_{\POp,\QOp}$.
\end{prop}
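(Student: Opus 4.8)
The plan is to verify the hypotheses of the transfer theorem (Theorem \ref{thm:transfer Kan}) for the free/forgetful adjunction $\FreeOp_{\POp,\QOp}\colon \sSetSeq \leftrightarrows \BiMod_{\POp,\QOp}\colon U$, exactly in the spirit of the proof of Proposition \ref{prop:model cat trip}, but now with the operads $\POp$ and $\QOp$ fixed. Condition (1) of Theorem \ref{thm:transfer Kan} is the easy smallness bookkeeping: the generating (acyclic) cofibrations of $\sSetSeq$ are arity-wise built from inclusions of simplicial sets with countably many simplices, the free bimodule functor $\FreeOp_{\POp,\QOp}$ produces in each arity a coproduct indexed by a set built from the (discrete) combinatorial data of trees with leaves decorated by arities of $\POp$, $\QOp$ and one copy of the free generator, and $U$ commutes with filtered colimits since it is computed arity-wise; hence the domains are small with respect to the relevant cell complexes. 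The real work is condition (2), i.e.\ that $U$ of any relative $\FreeOp_{\POp,\QOp}(J)$-cell complex is an arity-wise weak equivalence of simplicial sets.

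For condition (2) the cleanest route is to apply Lemma \ref{lem:cell we criterion} (or Lemma \ref{lem:cell we criterion 2}) to a comparison with a bigger model category in which transfer is already known, namely $\sSetTrip$ with the model structure of Proposition \ref{prop:model cat trip}. Concretely, there is a fully faithful inclusion $\catB_{\POp,\QOp}\colon \BiMod_{\POp,\QOp}\hookrightarrow \sSetTrip$ sending $\MOp$ to the triple $(\POp,\MOp,\QOp)$; it has a right adjoint (the under-category viewpoint: $\BiMod_{\POp,\QOp}$ is the fiber of $\sSetTrip\to \sSetOp^2$ over $(\POp,\QOp)$, and one can also realize it via base change from the under-category of the initial triple, using Proposition \ref{prop:slice base change} and Theorem \ref{thm:slice model str}). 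What matters is: (i) the inclusion $\catB_{\POp,\QOp}$ preserves colimits — colimits of bimodules over fixed $\POp,\QOp$ are computed arity-wise in simplicial sets, and so are colimits in $\sSetTrip$, with the operad components staying constant; (ii) it sends $\FreeOp_{\POp,\QOp}(J)$ into the acyclic cofibrations of $\sSetTrip$ — indeed $\catB_{\POp,\QOp}\circ\FreeOp_{\POp,\QOp}$ sends a generator $j$ of $\sSetSeq$ to the free triple on $(\emptyset, j, \emptyset)$, which is a relative $\FreeOp(J')$-cell complex for the generating acyclic cofibrations $J'$ of $(\sSetSeq)^3$, hence an acyclic cofibration in $\sSetTrip$; and (iii) a morphism $f$ of $\BiMod_{\POp,\QOp}$ is a weak equivalence iff its image in $\sSetTrip$ is, since both notions are arity-wise weak equivalences of simplicial sets. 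Feeding these three facts into Lemma \ref{lem:cell we criterion 2} (with $\catC=\sSetSeq$, $\catD=\BiMod_{\POp,\QOp}$, $\catA=\sSetTrip$, $\alpha=\catB_{\POp,\QOp}\circ\FreeOp_{\POp,\QOp}$, $\beta=\catB_{\POp,\QOp}$) shows that $\catB_{\POp,\QOp}$ of any $\FreeOp_{\POp,\QOp}(J)$-relative cell complex is a weak equivalence in $\sSetTrip$, hence arity-wise a weak equivalence of simplicial sets, hence the same holds after applying $U$; this is condition (2).

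Alternatively, and even more quickly, one can simply invoke \cite[Theorem 2.1]{BMColored} directly, as in the proof of Proposition \ref{prop:model cat trip}: a $\POp$-$\QOp$-bimodule is an algebra over a colored operad in simplicial sets built from $\POp$ and $\QOp$ (with one color per arity-slot of the free generator, or more economically using the colored-operad encoding already discussed in the introduction), so the Berger--Moerdijk transfer applies verbatim and yields precisely the distinguished classes of morphisms listed before the proposition. I would present the Berger--Moerdijk citation as the main argument and remark that the diagrammatic comparison with $\sSetTrip$ via Lemma \ref{lem:cell we criterion 2} gives an alternative self-contained verification. The only genuinely delicate point — and the one I would state carefully — is the identification of colimits in $\BiMod_{\POp,\QOp}$: one must check that filtered colimits and the relevant pushouts are created arity-wise in $\sSet$ (so that $U$ detects weak equivalences of cell complexes), which holds because the free bimodule monad on $\sSetSeq$ is finitary and its underlying endofunctor preserves the relevant colimits arity-wise.
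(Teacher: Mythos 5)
Your main argument --- citing Berger--Moerdijk \cite[Theorem 2.1]{BMColored} because $\POp$-$\QOp$-bimodules are algebras over a suitable colored operad --- is exactly the paper's proof, which consists of that single citation. One caveat on your auxiliary ``self-contained'' route: colimits in $\BiMod_{\POp,\QOp}$ and in $\sSetTrip$ are \emph{not} computed arity-wise in $\sSet$ (a coproduct of free bimodules is the free bimodule on the coproduct of generators, not the arity-wise coproduct), so the justifications you give for points (i) and for the final paragraph are wrong as stated; what is true, and all that the cell-complex argument of Lemma \ref{lem:cell we criterion 2} actually needs, is that the inclusion $\MOp\mapsto(\POp,\MOp,\QOp)$ preserves pushouts and transfinite compositions, because it is the left adjoint $\iota_B$ into the undercategory $\sSetTrip^{(\POp,\emptyset,\QOp)/}$ followed by the forgetful functor, which creates connected colimits.
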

\begin{proof}
	This is again a special case of \cite[Theorem 2.1]{BMColored}, since $\POp$-$\QOp$-bimodules are algebras over a suitable colored operad.
\end{proof}

\subsection{$\BiMod_{\POp,\QOp}$ as coreflective subcategory}
% Let 
% \[
% \XOp := \iota(\POp, \emptyset, \QOp) \in \CsSetOp	
% \]
% be the 2-colored operad that is the image of the triple $(\POp, \emptyset, \QOp)\in \sSetTrip$ under the inclusion $\iota$ of \eqref{equ:Trip Col adj}.
We consider the under-categories $\sSetTrip^{(\POp, \emptyset, \QOp)/}$ and $\CsSetOp^{\iota(\POp, \emptyset, \QOp)/}$ associated to the triple $(\POp, \emptyset, \QOp)\in \sSetTrip$. (Here $\iota$ is the inclusion \eqref{equ:Trip Col adj}.)
The category $\BiMod_{\POp,\QOp}$ can then be realized as a coreflective subcategory in both of these undercategories via the adjunctions
\begin{equation}\label{equ:trip under adj}
	\begin{tikzcd}
	\BiMod_{\POp,\QOp} \ar[shift left]{r}{\iota_B}
	&
	\ar[shift left]{l}{\pi_B}
	\sSetTrip^{(\POp, \emptyset, \QOp)/}
	\ar[shift left]{r}{\iota}
	&
	\ar[shift left]{l}{\pi}
	\CsSetOp^{\iota(\POp, \emptyset, \QOp)/}
	\end{tikzcd}.
\end{equation}
Here the left-adjoint $\iota_B$ sends a bimodule $\MOp$ to the canonical morphism of triples $(\POp, \emptyset, \QOp)\to (\POp,\MOp,\QOp)$. The right adjoint $\pi_B$ is defined on a morphism 
$f: (\POp, \emptyset, \QOp)\to (\POp',\MOp,\QOp')$ as $F_B(f) = f^* \MOp$, i.e., we pull back (restrict) the given $\POp'$-$\QOp'$ bimodule structure on $\MOp$ to a $\POp$-$\QOp$ bimodule structure via $f$. The right-hand adjunction is produced from the adjunction \eqref{equ:Trip Col adj} via Proposition \ref{prop:slicing adj}. Just note that there is no base-change in the definition of the right-adjoint since $\pi\iota\cong \mathit{id}$, so we just apply the functors $\pi$, $\iota$, justifying our (abuse of) notation.

We equip the undercategories above with the usual slice model structure, see Theorem \ref{thm:slice model str}.
Then the right-hand adjunction in \eqref{equ:trip under adj} is automatically a Quillen adjunction by Proposition \ref{prop:slicing adj}.

\begin{lemm}
	The left-hand adjunction $(\iota_B,\pi_B)$ in \eqref{equ:trip under adj} is also Quillen. The left-adjoint $\iota_B$ is homotopically fully faithful.
\end{lemm}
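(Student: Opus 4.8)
The plan is to verify the two assertions separately, both by reduction to facts already in hand. For the Quillen property of $(\iota_B, \pi_B)$, first I would observe that the model structure on $\sSetTrip^{(\POp, \emptyset, \QOp)/}$ is the slice model structure of Theorem~\ref{thm:slice model str}, so that weak equivalences and fibrations there are detected in $\sSetTrip$, hence arity-wise in simplicial sets. The right adjoint $\pi_B$ sends a morphism $f:(\POp,\emptyset,\QOp)\to(\POp',\MOp,\QOp')$ to $f^*\MOp$, which at the level of underlying symmetric sequences in simplicial sets is just the underlying sequence of $\MOp$ (the pullback of bimodule structures does not change the underlying spaces). Since the model structure on $\BiMod_{\POp,\QOp}$ is also defined arity-wise, it is then immediate that $\pi_B$ preserves fibrations and acyclic fibrations, so $(\iota_B, \pi_B)$ is Quillen. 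Equivalently one may argue that the left adjoint $\iota_B$ preserves cofibrations and acyclic cofibrations: its generating (acyclic) cofibrations are the images under $\FreeOp_{\POp,\QOp}$ of the generating (acyclic) cofibrations of $\sSetSeq$, and $\iota_B$ applied to such a map is, up to the evident identification of free bimodules with free triples under $(\POp,\emptyset,\QOp)$, again a generating (acyclic) cofibration of the slice category; I would spell this out only briefly.

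For the homotopical full faithfulness of $\iota_B$, I would like to invoke Lemma~\ref{lem:fully faithful}. For that I must check that $\pi_B$ preserves weak equivalences. But by the discussion above, on underlying symmetric sequences $\pi_B$ acts as the identity (it only restricts the bimodule structure), and weak equivalences on both sides are the arity-wise weak equivalences of simplicial sets; hence $\pi_B$ preserves (indeed creates) weak equivalences. Together with the already-established Quillen property and the fact that $\pi_B\iota_B=\mathit{id}$ (the restriction along the identity morphism of triples gives back $\MOp$), Lemma~\ref{lem:fully faithful} applies and yields
\[
	\Map^h_{\BiMod_{\POp,\QOp}}(\MOp,\NOp) \simeq \Map^h_{\sSetTrip^{(\POp,\emptyset,\QOp)/}}(\iota_B^h(\MOp),\iota_B^h(\NOp))
\]
for all bimodules $\MOp,\NOp$, which is the claimed homotopical full faithfulness.

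The step I expect to require the most care is the identification, at the level of underlying data, of the functor $\pi_B$ and of the image $\iota_B(\FreeOp_{\POp,\QOp}(-))$ with the corresponding free objects in the slice category; this is where the combinatorics of free bimodules over fixed operads versus free triples under $(\POp,\emptyset,\QOp)$ must be matched up. Once that bookkeeping is done, everything else is a formal consequence of Theorem~\ref{thm:slice model str}, Lemma~\ref{lem:fully faithful}, and the arity-wise nature of all the model structures involved, so I would keep the presentation of those parts short.
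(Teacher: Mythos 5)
Your proposal is correct and follows essentially the same route as the paper: the paper likewise notes that $\pi_B$ acts on underlying symmetric sequences by projecting to the middle component, that weak equivalences and fibrations are created arity-wise on both sides (so the adjunction is Quillen), and then invokes Lemma~\ref{lem:fully faithful} for homotopical full faithfulness. The extra alternative argument via generating cofibrations is not needed but does no harm.
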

\begin{proof}
The right adjoint $\pi_B$ acts on the level of the underlying symmetric sequences by projecting to the middle component $\MOp$. But since weak equivalences and fibrations are created on the level of simplicial symmetric sequences the result immediately follows.

Fully faithfulness follows from Lemma \ref{lem:fully faithful}.
\end{proof}

\subsection{Hopf bicomodules}
Let $\COp$ and $\DOp$ be Hopf cooperads.
We consider the category $\dgHBiModc_{\COp,\DOp}$ of dg Hopf $\COp$-$\DOp$-bicomodules.
This category fits into a chain of reflective subcategories 
\begin{equation}\label{equ:Htrip over adj}
	\begin{tikzcd}
		(\CdgHOpc)_{/\iota(\COp,*,\DOp)}
		\ar[shift left]{r}{\pi}
	&
	\ar[shift left]{l}{\iota}
	(\dgHTripc)_{/(\COp,*,\DOp)}
	\ar[shift left]{r}{\pi_B}
	&
	\ar[shift left]{l}{\iota_B}
	\dgHBiModc_{\COp,\DOp}
	\end{tikzcd}.
\end{equation}

Here the right-adjoint $\iota_B$ sends a Hopf $\COp$-$\DOp$-bicomodule $\MOp$ to the canonical morphism of triples $(\COp, \MOp, \DOp)\to (\COp, *, \DOp)$.
The left-adjoint $\pi_B$ sends a morphism of triples $f:(\COp', \MOp, \DOp')\to (\COp, *, \DOp)$ to the corestriction $f_*\MOp$.

We define a cofibrantly generated model category structure on the category $\dgHBiModc_{\COp,\DOp}$ by transfer from the slice model category structure on $(\dgHTripc)_{/(\COp,*,\DOp)}$.
Concretely, this means that:
\begin{itemize}
\item The weak equivalences of $\dgHBiModc_{\COp,\DOp}$ are the quasi-isomorphisms.
\item The fibrations are those morphisms $f$ for which $\iota(f)$ is a fibration in $(\dgHTripc)_{/(\COp,*,\DOp)}$.
\item The cofibrations are the morphisms that have the left-lifting property with respect to the acyclic fibrations.
\end{itemize}

\begin{prop}\label{prop:HBiModc model str}
The above distinguished classes of morphisms define a cofibrantly generated model category structure on $\dgHBiModc_{\COp,\DOp}$.
Moreover, the inclusions $\iota_B$ and $\iota \circ \iota_B$ are homotopically fully faithful.
\end{prop}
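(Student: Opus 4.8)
The plan is to establish the model structure via the transfer Theorem~\ref{thm:transfer Kan} applied to the composite adjunction
\[
\pi\circ\pi_B \colon (\CdgHOpc)_{/\iota(\COp,*,\DOp)} \leftrightarrows \dgHBiModc_{\COp,\DOp} \colon \iota_B\circ\iota,
\]
or equivalently (and more conveniently) to transfer directly from the slice category $(\dgHTripc)_{/(\COp,*,\DOp)}$, whose model structure is already provided by Theorem~\ref{thm:slice model str} combined with Proposition~\ref{prop:Htrip model}. First I would check condition (1): the generating (acyclic) cofibrations of the slice category are of the form $i\colon X\to Y$ over $(\COp,*,\DOp)$ with $i$ a generating (acyclic) cofibration of $\dgHTripc$; applying $\pi_B$ (corestriction along the structure map) produces objects of overall at most countable dimension, hence countably small, so smallness holds.

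The main work is condition (2), for which I would invoke Lemma~\ref{lem:cell we criterion} (or Lemma~\ref{lem:cell we criterion 2}). The key observation, exactly as in the proof of Proposition~\ref{prop:Htrip model}, is that a morphism $f$ of $\dgHBiModc_{\COp,\DOp}$ is a quasi-isomorphism iff $\iota_B(f)$ is a quasi-isomorphism in $(\dgHTripc)_{/(\COp,*,\DOp)}$, which in turn holds iff the underlying map of dg Hopf triples is a quasi-isomorphism; this is proved by the same tensor/direct-sum/finite-coinvariants argument as in Lemma~\ref{lem:pre Htrip model}. Then I would note that the generating acyclic cofibrations $\pi_B(J)$ are, arity-wise, acyclic cofibrations of dg commutative algebras — this follows from Proposition~\ref{prop:cofib aritywise cofib} applied to the acyclic cofibrations of $\CdgHOpc$, pushed through $\pi$ and then through the corestriction $\pi_B$, which on underlying symmetric sequences only involves pushouts that at the level of dg commutative algebras are built from acyclic cofibrations. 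Since colimits in $\dgHBiModc_{\COp,\DOp}$ are created arity-wise in dg commutative algebras, every relative $\pi_B(J)$-cell complex is an arity-wise acyclic cofibration, hence an arity-wise quasi-isomorphism, hence a weak equivalence. This verifies condition (2) and establishes the cofibrantly generated model structure.

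For the homotopical full faithfulness of $\iota_B$ and $\iota\circ\iota_B$, I would apply Lemma~\ref{lem:fully faithful}. One must check that $\iota_B$ is the right adjoint of a Quillen pair with left adjoint $\pi_B$, that $\pi_B$ preserves weak equivalences, and that $\dgHBiModc_{\COp,\DOp}\subset(\dgHTripc)_{/(\COp,*,\DOp)}$ is a coreflective subcategory inclusion (so Lemma~\ref{lem:fully faithful} applies in its coreflective/dual form): $\iota_B$ is fully faithful and $\pi_B\circ\iota_B\cong\mathit{id}$, since corestricting a bicomodule structure and then restricting it back recovers the original. The Quillen property is immediate from the transfer construction (fibrations and weak equivalences of $\dgHBiModc_{\COp,\DOp}$ are detected by $\iota_B$), and $\pi_B$ preserves weak equivalences by the quasi-isomorphism criterion above. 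Composing with the already-established homotopically fully faithful inclusion $\iota\colon(\dgHTripc)_{/(\COp,*,\DOp)}\to(\CdgHOpc)_{/\iota(\COp,*,\DOp)}$ (which follows from Lemma~\ref{lem:fully faithful} and Proposition~\ref{prop:slicing adj}, since $\pi$ preserves weak equivalences by Lemma~\ref{lem:pre Htrip model}) gives homotopical full faithfulness of $\iota\circ\iota_B$. The step I expect to require the most care is verifying arity-wise that the corestriction functor $\pi_B$ sends the generating acyclic cofibrations to arity-wise acyclic cofibrations of dg commutative algebras, since this involves tracking how the pushout defining corestriction interacts with the free bicomodule functor at the level of underlying symmetric sequences.
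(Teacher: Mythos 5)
Your proposal is correct and follows essentially the same route as the paper: transfer along $(\pi_B,\iota_B)$ via Theorem~\ref{thm:transfer Kan}, smallness from countable dimension, condition~(2) from Proposition~\ref{prop:cofib aritywise cofib} together with the fact that colimits are created arity-wise in dg commutative algebras, and full faithfulness from Lemma~\ref{lem:fully faithful}. The one step you flag as delicate is actually easier than you fear, since the corestriction $\pi_B$ is the identity on underlying symmetric sequences (it merely composes the coaction with the structure maps), so no pushout needs to be tracked.
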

\begin{proof}
We follow the proof of Proposition \ref{prop:Htrip model}. We apply Theorem \ref{thm:transfer Kan} to transfer the model structure along the adjunction $(\pi_B, \iota_B)$ above. Condition (1) of Theorem \ref{thm:transfer Kan} is again satisfied because the codomains of the generating (acyclic) cofibrations are of at most countable dimension, and hence countably small. 

Also note that the weak equivalences in the transferred model structure on $\dgHBiModc_{\COp,\DOp}$ are the quasi-isomorphisms, since the same is true in $(\dgHTripc)_{/(\COp,*,\DOp)}$, and $f$ is a quasi-isomorphism iff $\iota_B(f)$ is.

For condition (2) of Theorem \ref{thm:transfer Kan} we use that colimits in $\dgHBiModc_{\COp,\DOp}$ are created arity-wise on the level of dg commutative algebras. Furthermore, the generating acyclic cofibrations are arity-wise acyclic cofibrations of dg commutative algebras by Proposition \ref{prop:cofib aritywise cofib}.
Hence the relative cell complexes with respect to the generating acyclic cofibrations are aritywise relative cell complexes with respect to acyclic cofibrations in dg commutative algebras, and hence quasi-isomorphisms. This verifies the second condition of Theorem \ref{thm:transfer Kan}. We hence obtain a well-defined cofibrantly generated model category structure on 
$\dgHBiModc_{\COp,\DOp}$ by transfer, and 
the adjunction $(\pi_B, \iota_B)$ is Quillen.

Finally, the homotopical fully faithfulness of $\iota_B$ follows again from Lemma \ref{lem:fully faithful}.
% We apply Theorem \ref{thm:transfer restr}.
% We merely note that the images $\iota_B(F_B(i))$ for $i$ a generating cofibration (resp. generating acyclic cofibration) of $(\dgHTripc)_{/(\COp,*,\DOp)}$ are again generating (acyclic) cofibrations of $(\dgHTripc)_{/(\COp,*,\DOp)}$. Hence the conditions of Theorem \ref{thm:transfer restr} are satisfied.
\end{proof}

\subsection{Rational homotopy theory}

\begin{prop}\label{prop:bimod rht adj}
Let $\POp$ and $\QOp$ be simplicial operads and set $\COp:=\DGOmega_\sharp(\POp)$ and $\DOp:=\DGOmega_\sharp(\QOp)$.
Then there is a Quillen adjunction 
\begin{equation}\label{equ:bimod rht adj}
\DGOmega_\sharp \colon \BiMod_{\POp, \QOp} \leftrightarrows \left(\dgHBiModc_{\COp,\DOp}\right)^{op} \colon \G,
\end{equation}
where the right-adjoint $\G$ is the arity-wise application of the functor $\G$ of \eqref{equ:G def}.
The left-adjoint $\DGOmega_\sharp$ is defined from the corresponding functor $\DGOmega_\sharp$ on triples of Proposition \ref{prop:rht triples} such that 
\[
\DGOmega_\sharp(\POp, \MOp, \QOp) = 
(\DGOmega_\sharp(\POp), \DGOmega_\sharp(\MOp), \DGOmega_\sharp(\QOp)).
\]
\end{prop}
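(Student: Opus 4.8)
The plan is to deduce this from the triple-level Quillen adjunction of Proposition \ref{prop:rht triples} by two standard moves: first slicing it, then restricting the sliced adjunction to the relevant (co)reflective subcategories. Concretely, I would apply Proposition \ref{prop:slicing adj} to the Quillen adjunction $\DGOmega_\sharp\colon \sSetTrip \leftrightarrows (\dgHTripc)^{op}\colon \G$ at the object $A=(\POp,\emptyset,\QOp)$, and then feed the outcome into Lemma \ref{lem:restr adj}. Before doing so, one records the identification $\DGOmega_\sharp(\POp,\emptyset,\QOp)=(\COp,*,\DOp)$ in $\dgHTripc$, with $*$ the terminal Hopf $\COp$-$\DOp$-bicomodule; this is immediate from the arity-wise description of $\DGOmega_\sharp$ on triples, since the bimodule $\emptyset$ is arity-wise the empty simplicial set and is carried arity-wise to the terminal dg commutative algebra.

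For the slicing step, the first bullet of Proposition \ref{prop:slicing adj} at $A=(\POp,\emptyset,\QOp)$ yields a Quillen adjunction
\[
	\DGOmega_\sharp^{A/}\colon \sSetTrip^{(\POp,\emptyset,\QOp)/} \leftrightarrows \bigl((\dgHTripc)_{/(\COp,*,\DOp)}\bigr)^{op} \colon \G^{A/},
\]
where I have used that an under-category in an opposite category is an over-category, $\bigl((\dgHTripc)^{op}\bigr)^{(\COp,*,\DOp)/}=\bigl((\dgHTripc)_{/(\COp,*,\DOp)}\bigr)^{op}$, and both sides carry the slice model structures of Theorem \ref{thm:slice model str}.

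For the restriction step, I would observe that by \eqref{equ:trip under adj} the category $\BiMod_{\POp,\QOp}$ is a coreflective subcategory of $\sSetTrip^{(\POp,\emptyset,\QOp)/}$ via $(\iota_B,\pi_B)$ (a Quillen adjunction), and that by \eqref{equ:Htrip over adj} the category $\dgHBiModc_{\COp,\DOp}$ is a reflective subcategory of $(\dgHTripc)_{/(\COp,*,\DOp)}$ via $(\pi_B,\iota_B)$, so that after passing to opposites $\dgHBiModc_{\COp,\DOp}^{op}$ is a coreflective subcategory of $\bigl((\dgHTripc)_{/(\COp,*,\DOp)}\bigr)^{op}$ with inclusion $\iota_B^{op}$. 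One then checks that $\DGOmega_\sharp^{A/}$ lifts along these inclusions: it sends $\iota_B(\MOp)$, i.e.\ the triple morphism $(\POp,\emptyset,\QOp)\to(\POp,\MOp,\QOp)$, to the morphism $(\COp,\DGOmega_\sharp\MOp,\DOp)\to(\COp,*,\DOp)$ (the arrow being reversed in the opposite category), which is exactly $\iota_B^{op}$ of the Hopf $\COp$-$\DOp$-bicomodule given by the middle component $\DGOmega_\sharp\MOp$. The lift is thus $\DGOmega_\sharp\colon \BiMod_{\POp,\QOp}\to\dgHBiModc_{\COp,\DOp}^{op}$, $\MOp\mapsto\DGOmega_\sharp\MOp$, and it satisfies $\DGOmega_\sharp(\POp,\MOp,\QOp)=(\DGOmega_\sharp\POp,\DGOmega_\sharp\MOp,\DGOmega_\sharp\QOp)$ as asserted. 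Lemma \ref{lem:restr adj} now applies with $(L,R)=(\DGOmega_\sharp^{A/},\G^{A/})$, $(\iota_\catA,\pi_\catA)=(\iota_B,\pi_B)$ and $(\iota_\catB,\pi_\catB)=(\iota_B^{op},\pi_B^{op})$: the adjunction $(L,R)$ is Quillen by the previous step, $(\iota_B,\pi_B)$ is Quillen by \eqref{equ:trip under adj}, and $\iota_B^{op}$ creates (acyclic) cofibrations because $\iota_B$ creates (acyclic) fibrations, which is how the transferred model structure on $\dgHBiModc_{\COp,\DOp}$ is defined in Proposition \ref{prop:HBiModc model str}. This produces the Quillen adjunction \eqref{equ:bimod rht adj} with $\G=\pi_B\circ\G^{A/}\circ\iota_B^{op}$. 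Unwinding this composite one sees that $\G(\NOp)$ is the symmetric sequence $r\mapsto\G(\NOp(r))$, equipped with the $\POp$-$\QOp$-bimodule structure obtained by restricting its $\G\COp$-$\G\DOp$-bimodule structure along the adjunction units $\POp\to\G\COp$ and $\QOp\to\G\DOp$ — i.e.\ the arity-wise application of $\G$.

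The main obstacle I anticipate is purely organizational: keeping the opposite-category dualities straight — reflective versus coreflective, fibration versus cofibration, over- versus under-category — when verifying the hypotheses of Lemma \ref{lem:restr adj} for the dualized Hopf side. Apart from that the argument is a formal assembly of the model-categorical lemmas of Section \ref{sec:preliminaries}; the only input specific to the situation at hand is the identification $\DGOmega_\sharp(\POp,\emptyset,\QOp)=(\COp,*,\DOp)$, which as noted is an immediate consequence of the arity-wise nature of $\DGOmega_\sharp$ on triples.
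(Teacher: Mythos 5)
Your proposal is correct and follows essentially the same route as the paper: slice the triple-level Quillen adjunction of Proposition \ref{prop:rht triples} at $(\POp,\emptyset,\QOp)$ via Proposition \ref{prop:slicing adj}, then restrict to the (co)reflective subcategories $\BiMod_{\POp,\QOp}$ and $(\dgHBiModc_{\COp,\DOp})^{op}$ using Lemma \ref{lem:restr adj}, identifying the resulting right adjoint as $\pi_B\circ\G\circ\iota_B$, i.e.\ the arity-wise $\G$. Your write-up is in fact somewhat more careful than the paper's, in that it makes explicit both the identification $\DGOmega_\sharp(\POp,\emptyset,\QOp)=(\COp,*,\DOp)$ needed for the slicing to land in the correct over-category and the dualization bookkeeping (why $\iota_B^{op}$ creates (acyclic) cofibrations) required to invoke Lemma \ref{lem:restr adj}.
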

\begin{proof}
Consider the following diagram 
\[
    \begin{tikzcd}
    \BiMod_{\POp, \QOp} 
    \ar[shift left]{r}{\DGOmega_\sharp} 
    \ar[shift left]{d}{\iota_{B}}
    & \left(\dgHBiModc_{\COp,\DOp}\right)^{op} 
    \ar[shift left]{d}{\iota_{B}}
    \\
    \sSetTrip^{(\POp, \emptyset,\QOp)/}
    \ar[shift left]{r}{\DGOmega_\sharp} 
    \ar[shift left]{u}{\pi_B}
    & 
    \left( (\dgHTripc)_{/(\COp,*,\DOp)} \right)^{op}
    \ar[shift left]{u}{\pi_B}
    \ar[shift left]{l}{\G}
    \end{tikzcd}.
    \]
Here we define the upper horizontal functor as stated in the proposition, as the "middle part" of the functor $\DGOmega_\sharp$ on triples.
The Quillen adjunction in the bottom row is obtained by slicing the Quillen adjunction \eqref{equ:rht trip adj}, see also Proposition \ref{prop:slicing adj}.
By construction we have that $\iota_B\DGOmega_\sharp=\iota_B\DGOmega$.
Hence we can apply Lemma \ref{lem:restr adj} to obtain a right adjoint $\G'$ of the upper horizontal functor $\Omega_\sharp$, that satisfies $\pi_B\G=\G'\pi_B$. Applying $\iota_B$ from the right we obtain $\G'=\pi_B\G\iota_B$, so that $\G'$ is indeed the arity-wise application of the functor $\G$ of \eqref{equ:G def}. We will hence rename $\G=\G'$ and drop the notation $\G'$.

Furthermore, from the second assertion of Lemma \ref{lem:restr adj} we immediately conclude that \eqref{equ:bimod rht adj} is a Quillen adjunction.

%, applied to the Quillen adjunction of the slice categories 
% \[
% 	\DGOmega_\sharp\colon
% 	\sSetTrip^{(\POp, \emptyset,\QOp)/}
%  \leftrightarrows
% \left( (\dgHTripc)_{/(\COp,*,\DOp)} \right)^{op}
% 	\colon\G.
% \]
% To see that the adjunction is Quillen, note that $\G$ sends (acyclic) cofibrations in $\dgHBiModc_{\COp,\DOp}$ to 
% {lem:cofib aritywise bimod}
\end{proof}

From Theorem \ref{thm:comparison colored} we then immediately conclude:

\begin{cor}
Let $\POp$ and $\QOp$ be simplicial operads and let $\MOp$ be an operadic $\POp$-$\QOp$-bimodule. Assume that these data satisfy the following conditions:
\begin{itemize}
\item $\POp(1)$ and $\QOp(1)$ are connected.
\item $\POp(r)$, $\QOp(r)$ and $\MOp(r)$ have degree-wise finite dimensional rational homology for each $r$.
\end{itemize}
Then the left-adjoint functor $\DGOmega_\sharp$ of Proposition \ref{prop:bimod rht adj} is such that the canonical comparison morphism 
\[
\DGOmega_\sharp(\MOp)(r) \to \DGOmega(\MOp(r))	
\]
is a weak equivalence of dg commutative algebras (i.e., a quasi-isomorphism) for each $r$.
\end{cor}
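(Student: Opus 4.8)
The plan is to reduce this to the colored-operad comparison Theorem~\ref{thm:comparison colored}. Recall that the triple $(\POp,\MOp,\QOp)$ generates a two-colored operad $\AOp:=\iota(\POp,\MOp,\QOp)\in\CsSetOp$ on the color set $\colC=\{I,II\}$, with $\AOp(r,0;I)=\POp(r)$, $\AOp(0,r;II)=\QOp(r)$, and, crucially, $\AOp(0,r;I)=\MOp(r)$. The first step is to unwind the construction of $\DGOmega_\sharp$ on bimodules: by Proposition~\ref{prop:bimod rht adj} it is the middle component of $\DGOmega_\sharp$ on triples, which by Proposition~\ref{prop:rht triples} is $\pi\,\DGOmega_\sharp\,\iota$, with $\DGOmega_\sharp$ the functor on colored operads and $\pi$ the functor that reads off arity components. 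This yields the identification $\DGOmega_\sharp(\MOp)(r)=\DGOmega_\sharp(\AOp)(0,r;I)$, and since all the comparison morphisms are defined arity-wise and compatibly with $\iota$ and $\pi$, the map $\DGOmega_\sharp(\MOp)(r)\to\DGOmega(\MOp(r))$ is precisely the $(0,r;I)$-component of the comparison morphism $\DGOmega_\sharp(\AOp)(\underline r;c)\to\DGOmega(\AOp(\underline r;c))$ of Theorem~\ref{thm:comparison colored}. Hence it suffices to apply that theorem to $\AOp$.

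I would then check the hypotheses of Theorem~\ref{thm:comparison colored} for $\AOp$, equipping $\colC$ with the poset order $II<I$. The vanishing requirement $\AOp(c;d)=\emptyset$ for $c\not\leq d$ only concerns the pair $(I,II)$, and indeed $\AOp$ has no operation of output color $II$ with an input of color $I$, because operations of output color $II$ are generated solely by $\QOp$, all of whose inputs have color $II$ (so in particular $\AOp(I;II)=\emptyset$). The connectivity requirement asks that $\AOp(I;I)=\POp(1)$ and $\AOp(II;II)=\QOp(1)$ be connected, which is part of the hypothesis; note that no condition on $\MOp(1)=\AOp(II;I)$ is needed. There remains the condition that every $\AOp(\underline r;c)$ has finite rational homology type. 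For $c=II$ this is immediate since $\AOp(0,r;II)=\QOp(r)$. For $c=I$ one uses the normal-form description of the colored operad generated by a bimodule: $\AOp(r_I,r_{II};I)$ is a coproduct over the finitely many rooted trees built from a single $\POp$-corolla of some arity $a$, with $r_I\leq a\leq r_I+r_{II}$, at the root, carrying $a-r_I$ incoming $\MOp$-corollas of arities $m_1,\dots,m_{a-r_I}\geq 1$ summing to $r_{II}$ (the $\QOp$-operations being absorbed into the $\MOp$-corollas via the right module structure), of terms obtained from $\POp(a)$ and the $\MOp(m_i)$ by tensor products and coinvariants under finite symmetric groups. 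Since this index set is finite and, rationally, tensor products obey the Künneth formula while coinvariants under finite groups are exact, degree-wise finiteness of the rational homology of the $\POp(a)$ and $\MOp(m_i)$ carries over to $\AOp(\underline r;c)$.

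The last hypothesis, that $\AOp$ be cofibrant in $\CsSetOp$, together with the correct handling of the finite-type condition for the mixed-arity components just discussed, is what I expect to be the main obstacle; everything else is purely formal. For cofibrancy one assumes, as one must for $\DGOmega_\sharp$ to carry homotopical meaning, that $\POp$ and $\QOp$ are cofibrant simplicial operads and $\MOp$ a cofibrant $\POp$-$\QOp$-bimodule. Then $\iota_B(\MOp)$ is cofibrant in the slice $\sSetTrip^{(\POp,\emptyset,\QOp)/}$, so $(\POp,\emptyset,\QOp)\to(\POp,\MOp,\QOp)$ is a cofibration in $\sSetTrip$; as $(\POp,\emptyset,\QOp)$ is itself cofibrant there, $(\POp,\MOp,\QOp)$ is cofibrant in $\sSetTrip$, and applying the left Quillen functor $\iota$ shows $\AOp$ is cofibrant in $\CsSetOp$. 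With all hypotheses in place, Theorem~\ref{thm:comparison colored} gives that $\DGOmega_\sharp(\AOp)(0,r;I)\to\DGOmega(\AOp(0,r;I))$ is a weak equivalence for every $r$, which by the first paragraph is exactly the asserted comparison weak equivalence $\DGOmega_\sharp(\MOp)(r)\to\DGOmega(\MOp(r))$.
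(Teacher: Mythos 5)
Your proposal is correct and follows exactly the route the paper intends: the paper's entire proof is the phrase ``From Theorem \ref{thm:comparison colored} we then immediately conclude,'' and your argument simply supplies the implicit details --- the identification of $\DGOmega_\sharp(\MOp)(r)$ with the $(0,r;I)$-component of $\DGOmega_\sharp$ applied to the two-colored operad $\iota(\POp,\MOp,\QOp)$, the choice of poset order $II<I$, the normal-form/finite-type check for the mixed-arity components, and the cofibrancy verification. Your remark that one must add cofibrancy hypotheses (on $\MOp$, and on $\POp$, $\QOp$ so that $\iota(\POp,\MOp,\QOp)$ is cofibrant in $\CsSetOp$) is well taken: the corollary as stated omits them, although the corresponding item of Theorem \ref{thm:main bi} does assume $\MOp$ cofibrant.
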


\subsection{$\La$ bimodules}
\subsubsection{Model structure on simplicial $\La$ bimodules}
Let $\POp$, $\QOp$ be fixed simplicial $\La$-operads, and consider the category of $\POp$-$\QOp$-$\La$-bimodules $\La\BiMod_{\POp, \QOp}$. 

We define a cofibrantly generated model category structure on $\La\BiMod_{\POp, \QOp}$ by right transfer along the free/forgetful adjunction
\[
	\FreeOp_{\POp, \QOp} \colon \La\sSetSeq \leftrightarrows \Lai\BiMod_{\POp, \QOp}
	\colon U.
\]

\begin{prop}
Right transfer along the above adjunction endows the category $\La\BiMod_{\POp, \QOp}$ with a well-defined cofibrantly generated model category structure.
\end{prop}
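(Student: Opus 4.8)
The plan is to verify the two conditions of the transfer theorem, Theorem \ref{thm:transfer Kan}, for the adjunction $(\FreeOp_{\POp,\QOp}, U)$, following the pattern used above for the Reedy model structure on $\La\sSetTrip$.

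Condition (1) is the usual smallness statement, which I would deduce from the remark appended to that condition: it suffices that $U$ preserves filtered colimits. This holds because in each fixed arity the free bimodule functor $\MOp\mapsto \FreeOp_{\POp,\QOp}(\MOp)\cong \POp\circ \MOp\circ \QOp$ is a colimit over finitely many composition/partition data, each obtained by tensoring with fixed simplicial-set components of $\POp$ and $\QOp$ and passing to coinvariants under a finite group; these operations commute with filtered colimits, and the $\La$-structure maps are finitary, so $U$ preserves filtered colimits.

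For condition (2) I would apply Lemma \ref{lem:cell we criterion} to the diagram of adjunctions
\[
\begin{tikzcd}[column sep=2cm, row sep=1.5cm]
	\La\sSetSeq
	\ar[shift left]{r}{\FreeOp_{\POp,\QOp}}
	\ar[shift left]{dr}{L'}
	&
	\ar[shift left]{l}{U} \La\BiMod_{\POp,\QOp}
	\ar[shift left]{d}{\iota_B}
	\\
	& \La\sSetTrip^{(\POp,\emptyset,\QOp)/} \ar[shift left]{u}{\pi_B}
	\ar[shift left]{ul}{R'}
\end{tikzcd},
\]
where $(\iota_B,\pi_B)$ is the evident $\La$-analogue of the left-hand adjunction in \eqref{equ:trip under adj} (realizing $\La\BiMod_{\POp,\QOp}$ as a coreflective subcategory of the undercategory), the undercategory is equipped with the slice model structure of Theorem \ref{thm:slice model str} (applicable since $\La\sSetTrip$ is a cofibrantly generated model category), and $L':=\iota_B\circ \FreeOp_{\POp,\QOp}$ with right adjoint $R':=U\circ \pi_B$. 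Then $\pi_B\iota_B=\id$ by coreflectivity; $R'$ takes a triple in the undercategory to the underlying $\La$-sequence of its middle component, hence preserves arity-wise weak equivalences; and $(L',R')$ is a Quillen adjunction, since by the transfer definition of the model structure on $\La\sSetTrip$ a morphism is a(n acyclic) fibration there iff its underlying morphism in $(\La\sSetSeq)^3$ is one --- a property unchanged by slicing --- so $R'$ carries (acyclic) fibrations to (acyclic) fibrations. Lemma \ref{lem:cell we criterion} then yields condition (2), and Theorem \ref{thm:transfer Kan} supplies the model structure.

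I do not expect a genuine obstacle here: by design this is a routine transfer argument parallel to the $\La\sSetTrip$ case. The one point deserving care is the bookkeeping in the diagram above --- checking that $\iota_B$ is the left adjoint (so that $\pi_B\iota_B=\id$ is the correct triangle for a coreflective inclusion) and that $R'=U\pi_B$ is indeed right adjoint to $L'=\iota_B\FreeOp_{\POp,\QOp}$; once this is in place, the hypotheses of Lemma \ref{lem:cell we criterion} are exactly the three items verified above.
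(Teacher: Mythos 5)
Your proposal is correct and follows essentially the same route as the paper: condition (1) via smallness/finitariness, and condition (2) by applying Lemma \ref{lem:cell we criterion} to the triangle formed by the free/forgetful adjunction and the coreflective inclusion $\iota_B$ of $\La\BiMod_{\POp,\QOp}$ into the sliced category $\La\sSetTrip^{(\POp,\emptyset,\QOp)/}$ (the paper draws this as the upper-right triangle of a commuting square of adjunctions, but the functors $L'$ and $R'$ and the three hypotheses you check are the same). The only cosmetic difference is that the paper justifies condition (1) by noting the domains of the generating (acyclic) cofibrations are free bimodules on sequences with countably many simplices, hence countably small, rather than via $U$ preserving filtered colimits.
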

\begin{proof}
We apply again Theorem \ref{thm:transfer Kan}.
Condition (1) of the Theorem is satisfied since the domains of the generating (acyclic) cofibrations are free $\POp$-$\QOp$-bimodules, and the generating simplicial symmetric sequence have overall only countable many simplices.
The domains are hence countably small objects.
 
For condition (2) we consider the diagram of adjunctions
\begin{equation*}
	%\label{equ:omega square}
	\begin{tikzcd}
		\La\sSetSeq
		\ar[shift left]{r}{\FreeOp_{\POp, \QOp}}
		\ar[shift left]{d}{\iota}
		&
		\Lai\BiMod_{\POp, \QOp}
		\ar[shift left]{l}{U}
		\ar[shift left]{d}{\iota_B}
		\\
		\left((\La\sSetSeq)^3\right)^{(\POp, \emptyset, \QOp)/}
		\ar[shift left]{r}{\FreeOp}
		\ar[shift left]{u}{\pi}
		&
		\La\sSetTrip^{(\POp, \emptyset, \QOp)/}
		\ar[shift left]{l}{U}
		\ar[shift left]{u}{\pi}
	\end{tikzcd}.
	\end{equation*}

The diagram of right adjoints and the diagram of left adjoints commute.
Furthermore, the left-hand morphism $\pi$ and the bottom morphism $U$ preserve weak equivalences.
	Hence we may apply Lemma \ref{lem:cell we criterion} to the upper-right triangle of our diagram above, to see that condition (2) of Theorem \ref{thm:transfer Kan} holds.
\end{proof}

\subsubsection{Model category structure on dg Hopf $\La$ bicomodules} 
Let $\COp$, $\DOp$ be dg $\La$ Hopf cooperads and consider the category $\La\dgHBiModc_{\COp, \DOp}$ of dg $\La$ Hopf $\COp$-$\DOp$-bicomodules.
We define a cofibrantly generated model structure on the the category $\La\dgHBiModc_{\COp, \DOp}$ via right transfer along the adjunction 
\[
	\pi \colon \Lai\dgHTripc_{/(\COp, *, \DOp)} \leftrightarrows \Lai\dgHBiModc_{\COp, \DOp} \colon \iota
\]
% \[
% 	(-) \otimes \La \colon \dgHBiModc_{\COp, \DOp} \leftrightarrows \La\dgHBiModc_{\COp, \DOp} \colon U	
% \]

\begin{prop}
This yields a well-defined cofibrantly generated model category structure on $\La\dgHBiModc_{\COp, \DOp}$.
\end{prop}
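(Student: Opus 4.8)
The plan is to verify the two conditions of Theorem~\ref{thm:transfer Kan} for the adjunction $\pi \colon \Lai\dgHTripc_{/(\COp, *, \DOp)} \leftrightarrows \La\dgHBiModc_{\COp,\DOp} \colon \iota$, where the source category carries the slice model structure of Theorem~\ref{thm:slice model str} induced from the model structure of Proposition~\ref{prop:La dgHTripc model str}. The whole argument is a line-by-line transcription of the proofs of Propositions~\ref{prop:HBiModc model str} and \ref{prop:La dgHTripc model str}, so I will only indicate the points that need to be checked.

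First I would dispose of condition (1). By Theorem~\ref{thm:slice model str} the generating (acyclic) cofibrations of $\Lai\dgHTripc_{/(\COp,*,\DOp)}$ are built from those of $\La\dgHTripc$, which are themselves images under the left adjoint $\pi$ of generating (acyclic) cofibrations of $\La\CdgHOpc$; all of these, and hence their images under $\pi$ in $\La\dgHBiModc_{\COp,\DOp}$, have domains of overall (i.e.\ in all arities together) at most countable dimension, and are therefore countably small. Thus condition (1) holds.

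Next I would identify the weak equivalences. Exactly as in Lemma~\ref{lem:pre Htrip model}, a morphism $f$ of $\La\dgHBiModc_{\COp,\DOp}$ is an arity-wise quasi-isomorphism if and only if $\iota(f)$ is one: the left adjoint $\pi$ preserves quasi-isomorphisms, and conversely the arity components of $\iota(f)$ are obtained from those of $f$ by passage to identity maps, tensor products, direct sums, and coinvariants under finite group actions, all of which preserve quasi-isomorphisms. Consequently, once the transferred structure exists its weak equivalences are precisely the arity-wise quasi-isomorphisms.

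The only substantive point is condition (2) of Theorem~\ref{thm:transfer Kan}. The generating acyclic cofibrations of $\La\dgHBiModc_{\COp,\DOp}$ are the images under $\pi$ of generating acyclic cofibrations of $\Lai\dgHTripc_{/(\COp,*,\DOp)}$, which in turn descend from generating acyclic cofibrations of $\La\CdgHOpc$; by Proposition~\ref{prop:cofib aritywise cofib La} the latter are arity-wise cofibrations of dg commutative algebras, and being weak equivalences, hence arity-wise quasi-isomorphisms, they are arity-wise acyclic cofibrations of dg commutative algebras. Now colimits in $\La\dgHBiModc_{\COp,\DOp}$ are created arity-wise on the level of dg commutative algebras, so any pushout of such a generating acyclic cofibration, and any transfinite composition of such pushouts, is arity-wise a relative cell complex built from acyclic cofibrations of dg commutative algebras, hence an arity-wise quasi-isomorphism; by the previous paragraph it is therefore a weak equivalence. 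This establishes condition (2), and Theorem~\ref{thm:transfer Kan} then yields the claimed cofibrantly generated model structure. The one step requiring genuine care—and the main obstacle—is the claim that colimits in $\La\dgHBiModc_{\COp,\DOp}$ are created arity-wise in dg commutative algebras: one unwinds the $\La$-analogue of the chain of (co)reflective and slice inclusions \eqref{equ:Htrip over adj} relating $\La\dgHBiModc_{\COp,\DOp}$ to $\La\CdgHOpc$, and invokes that colimits of colored dg Hopf $\La$-cooperads are, by Fresse's construction, created arity-wise in dg commutative algebras. All remaining verifications are routine.
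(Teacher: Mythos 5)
Your proof is correct and follows exactly the route the paper intends: the paper's own proof is simply ``analogous to the proof of Proposition~\ref{prop:La dgHTripc model str}'', and your argument is precisely that analogy spelled out (Theorem~\ref{thm:transfer Kan} via smallness of domains, identification of weak equivalences as in Lemma~\ref{lem:pre Htrip model}, and condition (2) via Proposition~\ref{prop:cofib aritywise cofib La} plus arity-wise creation of colimits in dg commutative algebras). No discrepancies to report.
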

\begin{proof}
	This is analogous to the proof of Proposition \ref{prop:La dgHTripc model str}.
\end{proof}

\subsubsection{Rational homotopy theory of $\La$ bimodules}

\begin{prop}\label{prop:bimod rht adj La}
	Let $\POp$ and $\QOp$ be simplicial $\La$ operads and set $\COp:=\Omega_\sharp(\POp)$ and $\DOp:=\Omega_\sharp(\QOp)$.
	Then there is a Quillen adjunction 
	\[
	\Omega_\sharp \colon \Lai\BiMod_{\POp, \QOp} \leftrightarrows \left(\Lai\dgHBiModc_{\COp,\DOp}\right)^{op} \colon \G,
	\]
	where the right-adjoint $\G$ is the arity-wise application of the functor $\G$ of \eqref{equ:G def}.
	The left-adjoint $\Omega_\sharp$ is defined from the corresponding functor $\Omega_\sharp$ on triples of Proposition \ref{prop:rht triples} such that 
	\[
	\Omega_\sharp(\POp, \MOp, \QOp) = 
	(\Omega_\sharp(\POp), \Omega_\sharp(\MOp), \Omega_\sharp(\QOp)).
	\]
	\end{prop}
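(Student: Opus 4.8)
The plan is to mimic exactly the argument of Proposition \ref{prop:bimod rht adj}, replacing every occurrence of an operad/triple/bimodule category by its $\La$-decorated version. First I would set up the square of adjunctions
\[
\begin{tikzcd}
\Lai\BiMod_{\POp, \QOp}
\ar[shift left]{r}{\Omega_\sharp}
\ar[shift left]{d}{\iota_{B}}
& \left(\Lai\dgHBiModc_{\COp,\DOp}\right)^{op}
\ar[shift left]{d}{\iota_{B}}
\\
\Lai\sSetTrip^{(\POp, \emptyset,\QOp)/}
\ar[shift left]{r}{\Omega_\sharp}
\ar[shift left]{u}{\pi_B}
&
\left( (\Lai\dgHTripc)_{/(\COp,*,\DOp)} \right)^{op}
\ar[shift left]{u}{\pi_B}
\ar[shift left]{l}{\G}
\end{tikzcd},
\]
where the bottom Quillen adjunction is obtained by slicing the Quillen adjunction of the previous subsection (rational homotopy theory of $\La$ triples) along the objects $(\POp, \emptyset,\QOp)$ and $(\COp,*,\DOp)$, using Proposition \ref{prop:slicing adj}; the vertical adjunctions are the coreflective/reflective inclusions of $\La$ bimodules into the corresponding $\La$ triple under-/over-categories, whose Quillen property and homotopical full faithfulness are established exactly as in the non-$\La$ case (Lemma \ref{lem:fully faithful} and the discussion preceding Proposition \ref{prop:HBiModc model str}).

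Next I would define the top horizontal left adjoint $\Omega_\sharp$ as the "middle part" of the functor $\Omega_\sharp$ on $\La$ triples, so that by construction $\iota_B \Omega_\sharp = \Omega_\sharp \iota_B$ (the diagram of left adjoints commutes). Then I would invoke Lemma \ref{lem:restr adj}: the left adjoint $\Omega_\sharp$ on triples lifts along the coreflective inclusions, hence there is a right adjoint $\G'$ of the top $\Omega_\sharp$ satisfying $\pi_B \G = \G' \pi_B$; applying $\iota_B$ from the right identifies $\G' = \pi_B \G \iota_B$ as the arity-wise application of $\G$, and the second assertion of Lemma \ref{lem:restr adj} yields that the top adjunction is Quillen. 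The claim that the left adjoint lifts the functor $\Omega_\sharp$ of Proposition \ref{prop:rht triples} is then immediate from the construction (the $\La$-version of $\Omega_\sharp$ on triples agrees with the plain one by the remark in section \ref{sec:operad rht} that $\Omega_\sharp$ on $\La$ operads lifts $\Omega_\sharp$ on plain operads).

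Since every ingredient has already been assembled in the $\La$ setting — the Reedy model structure on $\La\sSetTrip$, the transferred model structure on $\La\dgHTripc$, the Quillen adjunction of rational homotopy theory of $\La$ triples, the arity-wise cofibration property of Proposition \ref{prop:cofib aritywise cofib La}, and the slicing/restriction lemmas — the proof is a routine transcription. Accordingly I would simply write: "This is analogous to the proof of Proposition \ref{prop:bimod rht adj}, using the Quillen adjunction for rational homotopy theory of $\La$ triples in place of \eqref{equ:rht trip adj}, and Proposition \ref{prop:cofib aritywise cofib La} in place of Proposition \ref{prop:cofib aritywise cofib}." The only point requiring a moment's care — and the closest thing to an obstacle — is checking that slicing the $\La$-triple Quillen adjunction along $(\COp,*,\DOp)$ and $(\POp,\emptyset,\QOp)$ is legitimate, i.e. that these objects are compatible under $\Omega_\sharp$ (so that $\Omega_\sharp(\POp,\emptyset,\QOp) \cong (\COp,*,\DOp)$), which holds because $\Omega_\sharp$ is the arity-wise differential forms functor and sends the empty bimodule to the terminal (unit) comodule; once this is noted, Proposition \ref{prop:slicing adj} applies verbatim.
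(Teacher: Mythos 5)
Your proposal matches the paper's intent exactly: the paper's proof is the one-line remark that the statement follows analogously to Proposition \ref{prop:bimod rht adj}, and your detailed square of adjunctions, the use of Lemma \ref{lem:restr adj}, and the substitution of the $\La$-triple Quillen adjunction and Proposition \ref{prop:cofib aritywise cofib La} are precisely the intended transcription. No discrepancies.
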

	\begin{proof}
This is analogous to the proof of Proposition \ref{prop:bimod rht adj La}. 
	\end{proof}

\subsection{Simplicial enrichment}

\subsubsection{... on the over-category $\dgHTripc_{/(\COp,*\DOp)}$}

The simplicial enrichment on the model category $\dgHTripc$ gives rise to a simplicial enrichment structure on the over-category $\dgHTripc_{/X}$, with $X=(\COp,*\DOp)$ (or any other object). Unfortunately, we are not aware of significant literature on the topic of enriched (co)slice model categories, although the theory seems to be known to experts, see \cite{nlab_slice_enriched,nlab_slice}. So we recall here some constructions.

The mapping spaces $\Map_{/X}(f,g)$ in $\dgHTripc_{/X}$ between objects $f:\SOp\to X$ and $g:\TOp\to X$ fit into a pullback square
\[
\begin{tikzcd}
\Map_{/X}(f,g) \ar{r} \ar{d} & \Map(\SOp, \TOp) \ar{d}{\circ g} \\
* \ar{r}{f} &  \Map(\SOp, X)
\end{tikzcd}.
\]
Similarly, there is a simplicial cotensor structure $(f,K)$ associating to the object $f:\TOp\to X$ of $\dgHTripc_{/X}$ and the simplicial set $K$ the object $\tilde f^K:\tilde \TOp^K\to X$ of $\dgHTripc_{/X}$ fitting into a pullback square
\[
\begin{tikzcd}
    \tilde \TOp^K \ar{r} \ar{d}{\tilde f^K} & \TOp^K \ar{d}{f^K}\\
X=X^* \ar{r} &  X^K
\end{tikzcd}.
\]
Here we use the notation $\tilde f^K$ to distinguish our new morphism from the morphism $f^K:\TOp^K\to X^K$ obtained by applying the simplicial cotensor structure of $\dgHTripc$ to $f$.
We then have the adjunction relation 
\begin{equation}\label{equ:adj map trip over}
    \Mor_{\sSet}(K, \Map_{/X}(f, g))
    \cong 
    \Mor_{\dgHTripc_{/X}}(f, \tilde g^K),
\end{equation}
that follows from \eqref{equ:adj map triples} and the fact that limits can be taken out of the morphism sets in the second slot.

Finally, the simplicial cotensoring on $\dgHTripc_{/X}$ also satisfies the pullback-corner property, see \cite[Proposition 2.3]{nlab_slice}.

Let us make the cotensor structure more explicit for the specific case $X=(\COp,*\DOp)$ relevant for this paper.
Consider an object of the form $f:(\COp,\MOp,\DOp) \to (\COp,*\DOp)$ of the over-category. Then the object $\tilde f^K$ is defined by the pullback square 
\begin{equation}\label{equ:overc cotensor pb}
\begin{tikzcd}
   (\COp, \tilde \MOp^K, \DOp) \ar{r} \ar{d}{\tilde f^K} & (\COp^K, \MOp^K, \DOp^K) \ar{d}{f^K}\\
   (\COp, *, \DOp) \ar{r} &  (\COp^K, *, \DOp^K)
\end{tikzcd}.
\end{equation}
Here the $\COp$-$\DOp$-bicomodule $\tilde \MOp^K$ is obtained by coinduction from the $\COp^K$-$\DOp^K$-bicomodule $\MOp^K$.
In particular, if $\MOp=\FreeOp^c_{\COp,\DOp}(\AOp)$ is a cofree bicomodule, then $\MOp^K=\FreeOp^c_{\COp^K,\DOp^K}(\AOp\otimes \Omega(K))$ and $\tilde \MOp^K=\FreeOp^c_{\COp,\DOp}(\AOp\otimes \Omega(K))$.

\subsection{... on $\dgHBiModc_{\COp, \DOp}$}
Finally, we construct a simplicial enrichment on the category $\dgHBiModc_{\COp, \DOp}$, following sections \ref{sec:enrichment} and \ref{sec:enrichment trip} above. 
We define mapping spaces (simplicial sets) between objects $\MOp, \NOp\in \dgHBiModc_{\COp, \DOp}$ as
\begin{equation}\label{equ:bimod map def}
    \Map(\MOp, \NOp):=
    \Mor_{\dgHBiModc_{\COp\otimes \Omega(\Delta^\bullet), \DOp\otimes \Omega(\Delta^\bullet)} }
    (\MOp\otimes \Omega(\Delta^\bullet), \NOp\otimes \Omega(\Delta^\bullet)),
\end{equation}
by changing the base ring to $\Omega(\Delta^\bullet)$.
We claim that these mapping spaces fit into an adjunction 
\begin{equation}\label{equ:adj map bimod}
    \Mor_{\sSet}(K, \Map(\MOp, \NOp))
    \cong 
    \Mor_{\dgHBiModc_{\COp, \DOp}}(\MOp, \hat \NOp^K),
\end{equation}
analogous to \eqref{equ:map adjunction}, \eqref{equ:adj map triples}. The simplicial cotensoring $(\NOp, K)\mapsto\hat \NOp^K$ can be constructed as in earlier sections. However, we might as well restrict it from the discussion of the previous subsection.
Consider the following diagram
\[
    \begin{tikzcd}[row sep=1.7cm, column sep=2cm]
        \sSet 
        \ar[shift left]{rd}{(\widetilde{\iota_B(\NOp)})^{(-)}}
        \ar{r}{\tilde \NOp^{(-)}} 
    & \left(\dgHBiModc_{\COp,\DOp} \right)^{op}
    \ar[shift left]{d}{\iota_{B}}
    \\
    & 
    \left(\dgHTripc_{/(\COp,*,\DOp)} \right)^{op}
    \ar[shift left]{u}{\pi_B}
    \ar[shift left]{lu}{\Map_{/(\COp,*,\DOp)}(-,\iota_B(\NOp))}
    \end{tikzcd}.
\]
From the discussion of the previous subsection we know that the diagonal arrows are a Quillen adjunction. The horizontal arrow uses the object in the upper left-hand corner of \eqref{equ:overc cotensor pb}. In other words, we have that $\iota_B(\tilde N^K) = (\widetilde{\iota_B(\NOp)})^{K}$, so the triangle of left adjoints in the above diagram commutes. We may hence apply Lemma \ref{lem:restr adj} again, considering our triangle as a quenched square. The Lemma asserts that $\tilde \NOp^{(-)}$ has the right adjoint 
\[
\Map_{/(\COp,*,\DOp)}(\iota_B(-), \iota_B(N))
=
\Map(-, N),
\] 
using fully faithfullness of $\iota_B$ for the equality. Hence we have established the adjunction \eqref{equ:adj map bimod}, with 
\[
    \hat \NOp^K := \tilde \NOp^K.
\]

% The simplicial cotensor structure $(\NOp, K)\mapsto \hat \NOp^K$ (for a simplicial set $K$ and a dg Hopf $\COp$-$\DOp$-bicomodule $\NOp$) is defined as before:
% For a cofree object $\NOp=\FreeOp^c_{\COp, \DOp}(\AOp)$ we set $\hat \NOp^K=\FreeOp^c_{\COp, \DOp}(\AOp\otimes\Omega(K))$.
% An arbitrary bicomodule $\NOp$ can be written as a reflexive equalizer of cofree objects
% and we extend the construction so that it preserves reflexive equalizers.
% Again \eqref{equ:adj map bimod} holds because both sides agree for $\NOp$ a cofree object, and both sides preserve equalizers in $\NOp$.

% We claim that the inclusion $\iota: \dgHBiModc_{\COp, \DOp} \to \dgHTripc_{/(\COp,*\DOp)}$ preserves the cotensor structures on both categories,
% \begin{equation}\label{equ:cotens same}
% \widetilde{\iota(M)}^K = \iota(\hat M^K). 
% \end{equation}
% This is already reflected by using the same notation $\tilde M^K$ above and in the previous subsection.
% Consider again the pullback diagram \eqref{equ:overc cotensor pb} defining $\widetilde{\iota(M)}^K=\iota(\tilde M^K)$. For $\MOp$ a cofree bicomodule it is clear that $\tilde M^k = \hat M^K$ since coinductions sends cofree cobimodules to cofree cobimodules. For general $\MOp$ we have defined $\hat M^K$ as an equalizer of free objects and since the pullback \eqref{equ:overc cotensor pb} commutes with equalizers we arrive at the desired conclusion that
% \[
% 	\hat M^K = \tilde M^K,
% \]
% so that \eqref{equ:cotens same} holds.

By the same argument as in the proof of Proposition \ref{prop:Trip Op PCA} above we then conclude:

\begin{prop}
(Pullback-corner axiom) For a cofibration of simplicial sets $i:K\to L$ and a fibration of dg Hopf bicomodules $p:\MOp\to \NOp$ in $\dgHBiModc_{\COp, \DOp}$ the induced morphism 
\begin{equation}\label{equ:pcp trip}
\hat \MOp^L \to \hat \NOp^L \times_{\hat \NOp^K} \hat \MOp^K
\end{equation}
is a fibration in $\dgHBiModc_{\COp, \DOp}$, and a weak equivalence if either $i$ or $p$ is a weak equivalence.
\end{prop}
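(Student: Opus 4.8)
The plan is to mirror the proof of Proposition~\ref{prop:Trip Op PCA}, transporting the statement along the transfer adjunction $(\pi_B,\iota_B)$ of \eqref{equ:Htrip over adj} to the over-category $\dgHTripc_{/(\COp,*,\DOp)}$, where the pullback-corner axiom has already been recorded. First I would recall that the model structure on $\dgHBiModc_{\COp,\DOp}$ is obtained by right transfer along $(\pi_B,\iota_B)$, so that a morphism of $\dgHBiModc_{\COp,\DOp}$ is a fibration (resp.\ acyclic fibration) precisely when its image under $\iota_B$ is one in $\dgHTripc_{/(\COp,*,\DOp)}$. Hence it suffices to understand $\iota_B$ applied to the morphism \eqref{equ:pcp trip}.

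The key structural input is that $\iota_B$ is compatible with the simplicial cotensorings on both sides. Indeed, by the construction $\hat\NOp^K=\tilde\NOp^K$ of the previous subsection together with the identification $\iota_B(\tilde\NOp^K)\cong(\widetilde{\iota_B(\NOp)})^K$ recorded there, and since $\iota_B$ is a right adjoint and therefore preserves the pullback defining the target of \eqref{equ:pcp trip}, the functor $\iota_B$ sends \eqref{equ:pcp trip} to the pullback-corner morphism
\[
(\widetilde{\iota_B(\MOp)})^L \longrightarrow (\widetilde{\iota_B(\NOp)})^L \times_{(\widetilde{\iota_B(\NOp)})^K} (\widetilde{\iota_B(\MOp)})^K
\]
in $\dgHTripc_{/(\COp,*,\DOp)}$ associated to the cofibration $i\colon K\to L$ and the morphism $\iota_B(p)$. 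Here I would also note naturality of the cotensor-compatibility isomorphism in both the object and the simplicial set, so that the two maps into $(\widetilde{\iota_B(\NOp)})^K$ are indeed the expected ones induced by $i$ and $p$.

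Next I would invoke the pullback-corner axiom for the cotensoring on $\dgHTripc_{/(\COp,*,\DOp)}$ discussed just above (which rests on Proposition~\ref{prop:Trip Op PCA} and \cite[Proposition 2.3]{nlab_slice}): since $\iota_B(p)$ is a fibration and $i$ a cofibration, the displayed morphism is a fibration, and a weak equivalence whenever $i$ or $\iota_B(p)$ is one. Finally I would translate back through the transfer: the weak equivalences on both sides are the quasi-isomorphisms, and $f$ is a quasi-isomorphism iff $\iota_B(f)$ is (the analogue of Lemma~\ref{lem:pre Htrip model}, already used in the proof of Proposition~\ref{prop:HBiModc model str}), so $\iota_B(p)$ is a weak equivalence exactly when $p$ is; hence \eqref{equ:pcp trip} is a fibration in $\dgHBiModc_{\COp,\DOp}$, acyclic whenever $i$ or $p$ is a weak equivalence.

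I do not expect a genuine obstacle: the statement is a formal consequence of the compatibility of $\iota_B$ with the cotensor structures and of the pullback-corner axiom downstairs. The only point requiring care is precisely that compatibility, namely checking that the coinduction formula defining $\hat\NOp^K$ really is the restriction along $\iota_B$ of the over-category cotensoring, including for non-cofree bicomodules — that is, that $\iota_B$ commutes with the reflexive equalizers presenting a general object in terms of cofree ones, which it does, being a right adjoint.
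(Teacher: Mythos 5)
Your proposal is correct and follows essentially the same route as the paper, which simply asserts the result "by the same argument as in the proof of Proposition \ref{prop:Trip Op PCA}": transfer the statement along $\iota_B$ using the compatibility $\iota_B(\tilde\NOp^K)\cong(\widetilde{\iota_B(\NOp)})^K$ and the fact that fibrations and weak equivalences in $\dgHBiModc_{\COp,\DOp}$ are detected by $\iota_B$, then invoke the pullback-corner axiom in the over-category $\dgHTripc_{/(\COp,*,\DOp)}$. Your write-up in fact spells out the details (preservation of the pullback by the right adjoint $\iota_B$, naturality of the cotensor compatibility) more explicitly than the paper does.
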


The analogous construction and verification also remains true for the catgeory $\La \dgHBiModc_{\COp, \DOp}$ of $\La$ dg Hopf bicomodules over the $\La$ dg Hopf cooperads $\COp$ and $\DOp$. 
As before the pullback-corner axiom implies that for any bicomodule $\MOp$ we have that $\hat \MOp^{\Delta^\bullet}$ is a simplicial framing for $\MOp$, so that we obtain:

\begin{cor}
    For $\MOp,\NOp\in \dgHBiModc_{\COp, \DOp}$ (resp. in $\La  \dgHBiModc_{\COp, \DOp}$) the object \eqref{equ:bimod map def} (resp. its $\La$ variant) is weakly equivalent to the mapping space between $\MOp$ and $\NOp$ defined canonically via simplicial framings in the model category $\dgHBiModc_{\COp, \DOp}$ (resp. in $\La \dgHBiModc_{\COp, \DOp}$).
\end{cor}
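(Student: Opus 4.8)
The plan is to run the argument of \cite[Theorem 4 and Proposition 31]{FWSimplicial} in the present setting; all of its structural inputs are now available. The claim to establish is that the ``ad hoc'' simplicial set \eqref{equ:bimod map def} agrees, up to weak equivalence, with the mapping space of $\dgHBiModc_{\COp,\DOp}$ (resp. $\La\dgHBiModc_{\COp,\DOp}$) computed through simplicial frames. First I would recall, as at the end of Section \ref{sec:enrichment}, why the simplicial cotensor $\hat\NOp^{(-)}$ produces a simplicial frame on a fibrant object $\NOp$: one has $\hat\NOp^{\Delta^0}\cong\NOp$ because $\Omega(\Delta^0)=\QQ$; the matching maps $\hat\NOp^{\Delta^n}\to\hat\NOp^{\partial\Delta^n}$ are fibrations by the pullback-corner axiom just proved, applied to the boundary inclusions $\partial\Delta^n\hookrightarrow\Delta^n$ and to $\NOp\to *$, so that $\hat\NOp^{\Delta^\bullet}$ is Reedy fibrant; and the maps $\hat\NOp^{\Delta^n}\to\hat\NOp^{\Delta^0}=\NOp$ are acyclic fibrations by the same axiom applied to the vertex inclusions $\Delta^0\hookrightarrow\Delta^n$, which are acyclic cofibrations of simplicial sets. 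Thus $\hat\NOp^{\Delta^\bullet}$ (resp. its $\La$ variant) is a genuine simplicial frame.

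Next I would use this frame together with the adjunction \eqref{equ:adj map bimod} to identify the two mapping spaces. For $\MOp$ cofibrant and $\NOp$ fibrant (which one may assume), the model-categorical mapping space obtained from the frame $\hat\NOp^{\Delta^\bullet}$ is the simplicial set $[n]\mapsto\Mor_{\dgHBiModc_{\COp,\DOp}}(\MOp,\hat\NOp^{\Delta^n})$. By \eqref{equ:adj map bimod} with $K=\Delta^n$ and the Yoneda lemma in $\sSet$,
\[
\Mor_{\dgHBiModc_{\COp,\DOp}}(\MOp,\hat\NOp^{\Delta^n})
\;\cong\;
\Mor_{\sSet}(\Delta^n,\Map(\MOp,\NOp))
\;=\;
\Map(\MOp,\NOp)_n
\]
naturally in $[n]\in\Delta$, giving an isomorphism of simplicial sets $\Map(\MOp,\NOp)\cong\Mor_{\dgHBiModc_{\COp,\DOp}}(\MOp,\hat\NOp^{\Delta^\bullet})$. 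Since the mapping space built from a simplicial frame is, by the general theory, well defined up to weak equivalence independently of the chosen frame, this proves the claim. The $\La$ case is handled identically, replacing each category by its $\La$ analog throughout, exactly as for the analogous corollary following Proposition \ref{prop:Trip Op PCA}.

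The one point requiring care — and the reason for citing \cite{FWSimplicial} rather than invoking the standard theory of simplicial model categories — is that $\hat\NOp^{(-)}$ is not a genuine right adjoint: it preserves finite limits but not arbitrary ones, already on $\CdgHOpc$ and hence on $\dgHBiModc_{\COp,\DOp}$. Consequently the Reedy fibrancy of $\hat\NOp^{\Delta^\bullet}$ cannot be read off from the usual formalism and must be extracted directly from the pullback-corner axiom, as above. I expect this to be routine here, since the cotensor on $\dgHBiModc_{\COp,\DOp}$ was constructed by restriction along $\iota_B$ (Lemma \ref{lem:restr adj}) from the cotensor on the slice $\dgHTripc_{/(\COp,*,\DOp)}$, which in turn inherits its properties from $\CdgHOpc$; all the ingredients of the argument of \cite{FWSimplicial} — the cotensor on cofree bicomodules, its extension along reflexive equalizers, and the pullback-corner axiom — are therefore present in exactly the required form.
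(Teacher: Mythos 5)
Your proposal is correct and follows exactly the paper's (very terse) argument: the paper likewise deduces the corollary from the statement that, by the pullback-corner axiom, $\hat\NOp^{\Delta^\bullet}$ is a simplicial framing, combined with the adjunction \eqref{equ:adj map bimod} identifying \eqref{equ:bimod map def} with $\Mor(\MOp,\hat\NOp^{\Delta^\bullet})$. You merely spell out the standard details (Reedy fibrancy from boundary inclusions, acyclicity from vertex inclusions, Yoneda) that the paper leaves implicit by reference to \cite{FWSimplicial}.
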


\section{Generalizations and special cases}
\label{sec:mixedmod}
\subsection{Extension to bimodules over colored operads}
Above we considered $\POp$-$\QOp$-bimodules for ordinary (single colored) operads $\POp$ and $\QOp$. The constructions and proofs readily extend to the case of bimodules over colored operads.
To this, we fix two disjoint finite posets of colors $\colC_1$ and $\colC_2$.
We also consider the disjoint union 
\[
\colC := \colC_1 \sqcup \colC_2.
\]
We extend the poset structures on $\colC_1$ and $\colC_2$ to a poset structure  on $\colC$ by declaring that $c_1>c_2$ for all $c_1\in \colC_1$, $c_2\in \colC_2$.

The constructions of section \ref{sec:triples} generalize to the colored setting. In particular we may define a model category $\sSet\Trip_{\colC_1,\colC_2}$ of triples $(\POp,\MOp,\QOp)$ consisting of a $\colC_1$-colored operad $\POp$, a $\colC_2$-colored operad $\QOp$ and a $\POp$-$\QOp$ operadic bimodule.
For a fixed simplicial $\colC_1$-colored operad $\POp$ and a $\colC_2$-colored operad $\QOp$ we may furthermore define the category of $\POp$-$\QOp$ operadic bimodules $\BiMod_{\POp,\QOp}$.
Then Propositions \ref{prop:model cat trip} and \ref{prop:model cat bimod} above extend to the following result, which is proven identically:
\begin{prop}
Let $\colC_1$ and $\colC_2$ be finite sets. 
Let $\POp$ be a $\colC_1$-colored operad and let $\QOp$ be a $\colC_2$-colored operad in simplicial sets.
Then the categories $\sSet\Trip_{\colC_1,\colC_2}$ and $\BiMod_{\POp,\QOp}$ carry model category structures such that the weak equivalences (resp. fibrations) are those morphisms that are arity-wise weak equivalences (resp. fibrations) of simplicial sets.
Furthermore, one has a chain of Quillen adjunctions 
\[
	\begin{tikzcd}
        \BiMod_{\POp,\QOp} \ar[shift left]{r}{\iota_B}
        &
        \ar[shift left]{l}{\pi_B}
        \sSetTrip_{\colC_1,\colC_2}^{(\POp, \emptyset, \QOp)/}
        \ar[shift left]{r}{\iota}
        &
        \ar[shift left]{l}{\pi}
        \CsSetOp^{\iota(\POp, \emptyset, \QOp)/}
     \end{tikzcd},
\]
with $\colC:=\colC_1\sqcup \colC_2$ and $\iota$, $\iota_B$, $\pi$, $\pi_B$ the natural inclusion and projection functors.
\end{prop}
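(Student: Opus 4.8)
The plan is to transcribe the arguments of Sections~\ref{sec:triples} and~\ref{sec:bimodules} verbatim, replacing the two-element color set $\{I,II\}$ by the finite poset $\colC = \colC_1 \sqcup \colC_2$ (with the order extended by $c_1 > c_2$ for $c_1 \in \colC_1$, $c_2 \in \colC_2$ as declared above), and checking that none of those arguments secretly used $|\colC| = 2$. First I would establish the two model structures. As in Propositions~\ref{prop:model cat trip} and~\ref{prop:model cat bimod}, both $\sSet\Trip_{\colC_1,\colC_2}$ and $\BiMod_{\POp,\QOp}$ are categories of algebras over a suitable (many-colored) colored operad in simplicial sets — a triple $(\POp,\MOp,\QOp)$ is an algebra over the colored operad governing such triples, and for fixed $\POp$, $\QOp$ a $\POp$-$\QOp$-bimodule is an algebra over the colored operad governing those. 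The hypotheses of \cite[Theorem~2.1]{BMColored} are verified exactly as in the single-color case — $\sSet$ is cofibrantly generated and all of its objects are small — and this is insensitive to the cardinality of the color sets. This produces the cofibrantly generated transferred model structures in which weak equivalences and fibrations are the arity-wise ones, proving the first assertion. (Alternatively one may transfer along the free/forgetful adjunction from the product of the categories of $\colC_1$-colored, $\colC_2$-colored, and $(\colC_2,\colC_1)$-bicolored symmetric sequences, in analogy with \eqref{equ:tripadjunctions}.)

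Next I would assemble the chain of adjunctions. Exactly as in Section~\ref{sec:triples}, the category $\sSet\Trip_{\colC_1,\colC_2}$ is realized as a coreflective subcategory of $\CsSetOp$ with $\colC = \colC_1 \sqcup \colC_2$: the right adjoint $\pi$ reads $\POp$, $\QOp$, and $\MOp$ off from the components $\AOp(\underline r; c)$ with, respectively, all inputs and output in $\colC_1$, all inputs and output in $\colC_2$, and all inputs in $\colC_2$ with output in $\colC_1$, together with the restricted operadic composition; the left adjoint $\iota$ is the free colored operad on $\POp \sqcup \MOp \sqcup \QOp$ modulo the relations dictated by the triple structure. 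The one point at which the hypotheses enter is the verification that $\iota$ is fully faithful with $\pi\iota = \id$: the order relation $c_1 > c_2$ forbids operations with a $\colC_1$-colored input and a $\colC_2$-colored output in the free construction, so the only mixed operations that arise are built from a single $\MOp$-operation composed with elements of $\POp$ and $\QOp$, and the arity components of $\iota(\POp,\MOp,\QOp)$ come out exactly as in the displayed formula of Section~\ref{sec:triples}. Since $\pi$ preserves fibrations and weak equivalences — both detected arity-wise on simplicial sets — the pair $(\iota,\pi)$ is Quillen, and it remains Quillen after passing to the under-categories over $(\POp, \emptyset, \QOp)$ by Theorem~\ref{thm:slice model str} and Proposition~\ref{prop:slicing adj} (no base change being needed since $\pi\iota = \id$). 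Finally $\BiMod_{\POp,\QOp}$ sits inside $\sSet\Trip_{\colC_1,\colC_2}^{(\POp,\emptyset,\QOp)/}$ via $\iota_B$, sending a bimodule $\MOp$ to the canonical arrow $(\POp,\emptyset,\QOp) \to (\POp,\MOp,\QOp)$, with right adjoint $\pi_B$ restricting the bimodule structure along a given morphism of triples; on underlying symmetric sequences $\pi_B$ is projection onto the middle component, so it creates fibrations and weak equivalences and is therefore right Quillen, as in the corresponding lemma of Section~\ref{sec:bimodules}. This gives the full chain.

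I do not expect a genuine obstacle here: the only step that actually uses the standing hypotheses on $\colC_1$ and $\colC_2$ (finiteness, and the imposed order between the two blocks) is the combinatorial identification of $\iota(\POp,\MOp,\QOp)$ with a two-block colored operad having the expected arity components — i.e.\ that encoding a bimodule-triple as a colored operad introduces no spurious operations — and once that is in place, transferred model structures, slice model structures, and the slicing of Quillen adjunctions are all applied exactly as in Sections~\ref{sec:preliminaries}--\ref{sec:bimodules}. If one also wanted homotopical full faithfulness of the inclusions, it would follow from Lemma~\ref{lem:fully faithful} as before, but that is not part of the statement to be proved.
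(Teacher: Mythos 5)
Your proposal is correct and follows the paper's own route: the paper proves this proposition simply by observing that the arguments of Propositions \ref{prop:model cat trip} and \ref{prop:model cat bimod} (transfer via \cite[Theorem 2.1]{BMColored}) and the coreflective-subcategory/slice constructions of sections \ref{sec:triples}--\ref{sec:bimodules} apply verbatim with $\{I,II\}$ replaced by $\colC_1\sqcup\colC_2$, which is exactly what you spell out. (One small quibble: the absence of spurious mixed operations in $\iota(\POp,\MOp,\QOp)$ is a feature of the generators and relations rather than of the imposed poset order, which the paper only needs later for the comparison theorem; this does not affect your argument.)
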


Dually we consider the category $\dgHTripc_{\colC_1,\colC_2}$ 
of triples $(\COp,\MOp,\DOp)$ with $\COp$ a $\colC_1$-colored dg Hopf cooperad, $\DOp$ a $\colC_2$-colored dg Hopf cooperad and $\MOp$ a $\COp$-$\DOp$-bicomodule. For fixed such $\COp$, $\DOp$ we also consider the category $\dgHBiModc_{\COp,\DOp}$ of $\COp$-$\DOp$-bicomodules.
These categories fit into adjunctions 
\begin{align}\label{equ:Trip Col adj col}
	\pi \colon \CdgHOpc &\leftrightarrows \dgHTripc_{\colC_1,\colC_2} \colon \iota
	,
\end{align}
and 
\begin{equation}\label{equ:Htrip over adj col}
	\begin{tikzcd}
		(\CdgHOpc)_{/\iota(\COp,*,\DOp)}
		\ar[shift left]{r}{\pi}
	&
	\ar[shift left]{l}{\iota}
	(\dgHTripc_{\colC_1,\colC_2})_{/(\COp,*,\DOp)}
	\ar[shift left]{r}{\pi_B}
	&
	\ar[shift left]{l}{\iota_B}
	\dgHBiModc_{\COp,\DOp}
	\end{tikzcd}.
\end{equation}
Parallel to Propositions \ref{prop:Htrip model} and \ref{prop:HBiModc model str} above one then shows:
\begin{prop}
    Let $\colC_1$ and $\colC_2$ be finite posets.
Then the category $\dgHTripc_{\colC_1,\colC_2}$ may be equipped with a well defined model category structure via right transfer along the adjunction \eqref{equ:Trip Col adj col}. 

Let $\COp$ be a $\colC_1$-colored dg Hopf cooperad and $\DOp$ a $\colC_2$-colored dg Hopf cooperad. Then the category $\dgHBiModc_{\COp,\DOp}$ may be equipped with a well-defined model category structure via right transfer along the right-hand adjunction of \eqref{equ:Htrip over adj col}.
\end{prop}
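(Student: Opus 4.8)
The plan is to repeat the arguments of Propositions~\ref{prop:Htrip model} and~\ref{prop:HBiModc model str} essentially verbatim, replacing the two-colored operads appearing there by $\colC$-colored ones with $\colC=\colC_1\sqcup\colC_2$, and keeping track of the now color-indexed arities $\underline r\colon \colC\to\mathbb Z_{\geq 0}$ in place of pairs of natural numbers.

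For the first assertion I would apply Theorem~\ref{thm:transfer Kan} to the adjunction~\eqref{equ:Trip Col adj col}. Condition~(1) holds because the images under $\pi$ of the generating (acyclic) cofibrations of $\CdgHOpc$ have domains of overall at most countable dimension, hence are countably small. For condition~(2), first record the analog of Lemma~\ref{lem:pre Htrip model}: a morphism $f$ of $\dgHTripc_{\colC_1,\colC_2}$ is a quasi-isomorphism iff $\iota(f)$ is, since the arity components of $\iota(f)$ are obtained from those of $f$ by tensor products, direct sums, and coinvariants under finite group actions, all of which preserve quasi-isomorphisms over a field of characteristic zero; in particular the transferred weak equivalences are exactly the arity-wise quasi-isomorphisms. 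By Proposition~\ref{prop:cofib aritywise cofib} the generating acyclic cofibrations $\pi(J)$ are arity-wise acyclic cofibrations of dg commutative algebras, and since colimits in $\dgHTripc_{\colC_1,\colC_2}$ are created arity-wise in dg commutative algebras, every relative $\pi(J)$-cell complex is an arity-wise quasi-isomorphism, hence a weak equivalence. This verifies condition~(2) and establishes the model structure.

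For the second assertion I would first invoke Theorem~\ref{thm:slice model str} to equip $(\dgHTripc_{\colC_1,\colC_2})_{/(\COp,*,\DOp)}$ with its slice model structure, and then transfer along the adjunction $(\pi_B,\iota_B)$ of~\eqref{equ:Htrip over adj col} exactly as in Proposition~\ref{prop:HBiModc model str}. Smallness of the domains of the generating (acyclic) cofibrations is checked as before; the transferred weak equivalences are again the arity-wise quasi-isomorphisms because $f$ is such iff $\iota_B(f)$ is; and condition~(2) of Theorem~\ref{thm:transfer Kan} follows because colimits in $\dgHBiModc_{\COp,\DOp}$ are created arity-wise on the level of dg commutative algebras, while the generating acyclic cofibrations are arity-wise acyclic cofibrations of dg commutative algebras by Proposition~\ref{prop:cofib aritywise cofib}.

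I do not expect a genuine obstacle: the only points requiring attention are bookkeeping, namely that the arity index is now a function $\colC\to\mathbb Z_{\geq 0}$ (which changes nothing in the arguments), and that one cites Proposition~\ref{prop:cofib aritywise cofib} in its already-colored form. If one preferred, the whole verification of condition~(2) could instead be routed through Lemma~\ref{lem:cell we criterion 2}, taking $\alpha$ and $\beta$ to be the arity-wise forgetful functors to the relevant products of dg commutative algebras, but the direct verification sketched above is shorter.
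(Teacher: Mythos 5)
Your proposal is correct and is exactly the argument the paper intends: the paper itself only says the result is proven ``parallel to Propositions \ref{prop:Htrip model} and \ref{prop:HBiModc model str}'', and your write-up supplies precisely that parallel verification (smallness of domains for condition (1), the colored analog of Lemma \ref{lem:pre Htrip model}, and arity-wise creation of colimits plus Proposition \ref{prop:cofib aritywise cofib} for condition (2) of Theorem \ref{thm:transfer Kan}). No gaps.
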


Finally, we obtain a rational homotopy adjunction between the above categories by slicing and restricting the adjunction \eqref{equ:G Omega adj op}, as in the proof of Propositions \ref{prop:rht triples} and \ref{prop:bimod rht adj} above. This yields:
\begin{prop}\label{prop:colored mod rht adj}
    Let $\colC_1$ and $\colC_2$  be finite posets.
    \begin{itemize}
    \item There is a Quillen adjunction 
    \[
        \DGOmega_\sharp \colon \sSetTrip_{\colC_1, \colC_2} \leftrightarrows \left(\dgHTripc_{\colC_1, \colC_2}\right)^{op} \colon \G,
    \]
    where the right-adjoint $\G$ is the arity-wise application of the functor $\G$ of \eqref{equ:G def}.
    \item Let $\POp\in \colC_1\sSetOp$ and $\QOp\in \colC_2\sSetOp$ be colored simplicial operads. Set $\COp:=\DGOmega_\sharp(\POp)$ and $\DOp:=\DGOmega_\sharp(\QOp)$.
    Then there is a Quillen adjunction 
    \[
        \DGOmega_\sharp \colon \BiMod_{\POp, \QOp} \leftrightarrows \left(\dgHBiModc_{\COp,\DOp}\right)^{op} \colon \G,
    \]
    where the right-adjoint $\G$ is the arity-wise application of the functor $\G$ of \eqref{equ:G def}.
    \end{itemize}
\end{prop}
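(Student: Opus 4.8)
The plan is to prove both statements by transposing, almost word for word, the arguments behind Propositions~\ref{prop:rht triples} and~\ref{prop:bimod rht adj} to the colored setting, with the two-element color set replaced throughout by $\colC=\colC_1\sqcup\colC_2$ carrying the poset structure fixed above (so that a bicomodule operation, with inputs of colors in $\colC_2$ and output of a color in $\colC_1$, respects the convention $\POp(c;d)=\emptyset$ unless $c\le d$). All the model structures that intervene — on $\CsSetOp$, $\CdgHOpc$, on the categories of colored triples $\sSetTrip_{\colC_1,\colC_2}$, $\dgHTripc_{\colC_1,\colC_2}$, and on $\dgHBiModc_{\COp,\DOp}$ — have been recorded in the preceding propositions, so the only thing left to establish is the existence of the adjunctions and their Quillen property, and this will be purely formal.

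For the first bullet I would first produce a left adjoint $\DGOmega_\sharp$ to the arity-wise functor $\G$ by the adjoint functor lifting theorem, applied to the square of free/forgetful adjunctions relating $\sSetTrip_{\colC_1,\colC_2}$, resp.\ $(\dgHTripc_{\colC_1,\colC_2})^{op}$, to a category of triples of colored symmetric sequences, exactly as in Proposition~\ref{prop:rht triples}: the vertical forgetful functors are monadic, and the lower horizontal $\G$ already has a left adjoint, namely the arity-wise $\DGOmega$. Next I would record, as there, that this new left adjoint sits in a commuting square of adjunctions together with the inclusion/projection pair $(\iota,\pi)$ of \eqref{equ:Trip Col adj col} (and its simplicial analogue) connecting colored triples to $\CsSetOp$, resp.\ $\CdgHOpc$. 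The square of right adjoints commutes for the trivial reason that $\G$ is an arity-wise construction while $\pi$ merely selects certain arity components, so $\G\pi=\pi\G$; hence the square of left adjoints commutes too, which gives the identification $\DGOmega_\sharp=\pi\,\DGOmega_\sharp\,\iota$ for the triples functor. The Quillen property then follows from Lemma~\ref{lem:restr adj} applied to this square: the colored-operad adjunction \eqref{equ:G Omega adj op} is Quillen; the pair $(\iota,\pi)\colon\sSetTrip_{\colC_1,\colC_2}\leftrightarrows\CsSetOp$ is Quillen for exactly the reason given in the two-colored case (fibrations and weak equivalences are detected arity-wise in simplicial sets); and the inclusion $\iota\colon(\dgHTripc_{\colC_1,\colC_2})^{op}\to(\CdgHOpc)^{op}$ creates (acyclic) cofibrations, because the model structure on $\dgHTripc_{\colC_1,\colC_2}$ is by construction right-transferred from $\CdgHOpc$ along $\iota$ and its weak equivalences are detected by $\iota$ (the colored analogue of Lemma~\ref{lem:pre Htrip model}).

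For the second bullet I would slice the adjunction just obtained at the triple $(\POp,\emptyset,\QOp)\in\sSetTrip_{\colC_1,\colC_2}$, whose image under $\DGOmega_\sharp$ is $(\COp,*,\DOp)$; by Proposition~\ref{prop:slicing adj} this produces a Quillen adjunction
\[
\DGOmega_\sharp\colon\sSetTrip_{\colC_1,\colC_2}^{(\POp,\emptyset,\QOp)/}\ \leftrightarrows\ \bigl((\dgHTripc_{\colC_1,\colC_2})_{/(\COp,*,\DOp)}\bigr)^{op}\ \colon\ \G.
\]
I would then restrict this along the coreflective inclusion $\iota_B\colon\BiMod_{\POp,\QOp}\hookrightarrow\sSetTrip_{\colC_1,\colC_2}^{(\POp,\emptyset,\QOp)/}$ and along the inclusion $\iota_B$ of \eqref{equ:Htrip over adj col} into $\bigl((\dgHTripc_{\colC_1,\colC_2})_{/(\COp,*,\DOp)}\bigr)^{op}$, which is reflective before passing to the opposite category and hence coreflective after. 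Because the triples functor $\DGOmega_\sharp$ sends $(\POp,\MOp,\QOp)$ to $(\DGOmega_\sharp(\POp),\DGOmega_\sharp(\MOp),\DGOmega_\sharp(\QOp))$, its restriction along these inclusions is exactly the arity-wise $\DGOmega_\sharp$ on bicomodules, i.e.\ $\iota_B\,\DGOmega_\sharp=\DGOmega_\sharp\,\iota_B$; so Lemma~\ref{lem:restr adj} applies once more, producing the right adjoint $\G=\pi_B\,\G\,\iota_B$ (using full faithfulness of $\iota_B$) together with the Quillen property, the latter because the model structure on $\dgHBiModc_{\COp,\DOp}$ is itself right-transferred along $\iota_B$, so that $\iota_B$ creates (acyclic) cofibrations in the opposite categories.

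I do not anticipate a genuine obstacle: the substantive input — the model structures and the comparison theorem for colored operads and Hopf cooperads — was already imported from Section~\ref{sec:operad rht}, where it is noted to carry over to any finite poset of colors without change. The one step that demands care is the bookkeeping of which functors are left and which are right adjoints after passing to opposite categories and forming slices, so that Lemma~\ref{lem:restr adj} and Proposition~\ref{prop:slicing adj} are invoked with their hypotheses genuinely satisfied; and one should keep in mind that the finiteness of $\colC=\colC_1\sqcup\colC_2$, together with the poset condition on the unary operations, is precisely what would be needed should one later wish to combine this adjunction with Theorem~\ref{thm:comparison colored}.
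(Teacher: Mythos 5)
Your proposal is correct and follows essentially the same route as the paper, which proves this proposition simply by slicing and restricting the colored adjunction \eqref{equ:G Omega adj op} exactly as in the proofs of Propositions \ref{prop:rht triples} and \ref{prop:bimod rht adj}; your expansion of the details (adjoint lifting for the triples functor, commuting squares with $(\iota,\pi)$ and $(\iota_B,\pi_B)$, Proposition \ref{prop:slicing adj} for the slice, and Lemma \ref{lem:restr adj} for the restriction and the Quillen property) is exactly what the paper intends. The bookkeeping of which inclusions become coreflective after passing to opposite categories, and why they create (acyclic) cofibrations there, is handled correctly.
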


\subsection{Infinitesimal bimodules}

We specialize the constructions for colored $\POp$-$\QOp$-bimodules of the previous subsection to the following situation:
\begin{itemize}
    \item The set of colors are $\colC_1=\{I,II\}$ and $\colC_2=\{*\}$.
    \item The $\colC_1$-colored operad $\POp=\AssM$ is the operad governing pairs $(A,M)$ of an associative algebra $A$ and a left $A$-module $M$.
    Explicitly, this means that 
    \begin{align*}
    \AssM(p,q;c) =
    \begin{cases}
    \bbS_p & \text{for $q=0$, $c=I$} \\
    \bbS_p & \text{for $q=1$, $c=II$} \\
    \emptyset & \text{otherwise}
    \end{cases}
    \end{align*}
    \item The (single-colored) simplicial operad $\QOp$ is arbitrary.
\end{itemize}
In this case $\POp$-$\QOp$-bimodules are the same as pairs $(\AOp,\MOp)$ consisting of an algebra object $\AOp$ in the category of right $\QOp$-modules, and a right module $\MOp$ of $\AOp$.

The canonical example of an algebra object in right $\QOp$-modules is $\AOp = \QOp_1$, defined such that 
\[
\QOp_1(r) = \QOp(r+1),
\]
with the composition at the first input providing the algebra structure.
Following Arone and Turchin, the module $\MOp$ in this situation is then called an infinitesimal $\QOp$-bimodule, see \cite{AroneTurchin}.

For $\QOp$ a simplicial operad we denote the category of infinitesimal $\QOp$-bimodules by $\IBiMod_{\QOp}$.
There is then an adjunction 
\begin{equation}\label{equ:IBiMod adj}
\iota \colon \IBiMod_{\QOp} \rightleftarrows \BiMod_{\AssM, \QOp}^{(\QOp_1,\emptyset)/} \colon \pi,
\end{equation}
such that $\iota$ is the natural inclusion sending an infinitesimal $\QOp$ bimodule $\MOp$ to the $\AssM$-$\QOp$-bimodule $(\QOp_1,\MOp)$.
The right adjoint $\pi$ sends a morphism $(f_1,f_2): (\QOp_1,\emptyset)\to (\AOp,\MOp)$ to the infinitesimal $\QOp$-bimodule $f_1^*\MOp$ obtained by restriction along $f_1$.
\begin{prop}
There is a model category structure on $\IBiMod_{\QOp}$ such that the weak equivalences (res. fibrations) are those morphisms that are arity-wise weak equvalences (res. fibrations) of simplicial sets.
With this model category structure the adjunction \eqref{equ:IBiMod adj} is Quillen.
\end{prop}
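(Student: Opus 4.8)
The plan is to mirror the two-step strategy used for ordinary operadic bimodules in Section \ref{sec:bimodules}: first produce the model structure on $\IBiMod_\QOp$ by right transfer, and then verify that the right adjoint in \eqref{equ:IBiMod adj} is right Quillen. For the transfer I would apply \cite[Theorem 2.1]{BMColored} (exactly as in the proof of Proposition \ref{prop:model cat bimod}) to the free/forgetful adjunction
\[
\FreeOp_\QOp \colon \sSetSeq \leftrightarrows \IBiMod_\QOp \colon U,
\]
where $U$ is the underlying-symmetric-sequence functor and $\FreeOp_\QOp$ the free infinitesimal $\QOp$-bimodule functor. This is legitimate because, for a fixed simplicial operad $\QOp$, infinitesimal $\QOp$-bimodules are the algebras over a suitable colored operad: the defining structure maps, namely the right action $\MOp\circ\QOp\to\MOp$ and the infinitesimal left action $\QOp\circ'(\MOp\circ\QOp)\to\MOp$, are linear in $\MOp$ and involve besides only the fixed operad $\QOp$, so they assemble into a colored operad whose algebras are precisely the objects of $\IBiMod_\QOp$. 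The transferred model structure then has as weak equivalences (resp.\ fibrations) exactly the morphisms that are arity-wise weak equivalences (resp.\ fibrations) of simplicial sets, since these are detected by $U$. Alternatively one may check the two hypotheses of Theorem \ref{thm:transfer Kan} by hand, the smallness and relative cell complex arguments being verbatim those of the bimodule case.

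For the Quillen adjunction it suffices to show that $\pi$ in \eqref{equ:IBiMod adj} preserves fibrations and acyclic fibrations. The category $\BiMod_{\AssM,\QOp}^{(\QOp_1,\emptyset)/}$ carries the slice model structure of Theorem \ref{thm:slice model str} built on $\BiMod_{\AssM,\QOp}$, whose fibrations and weak equivalences are the arity-wise fibrations and weak equivalences of simplicial sets, tested separately on the algebra component and on the module component of a pair $(\AOp,\MOp)$. Now $\pi$ sends a morphism of the undercategory --- a morphism $(\phi,\psi)\colon(\AOp,\MOp)\to(\BOp,\NOp)$ of $\AssM$-$\QOp$-bimodules under $(\QOp_1,\emptyset)$ --- to the restricted map $f_1^*\MOp\to g_1^*\NOp$ of infinitesimal $\QOp$-bimodules, whose underlying map of symmetric sequences is simply $\psi\colon\MOp\to\NOp$. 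Hence if $(\phi,\psi)$ is an (acyclic) fibration then so is $\psi$ arity-wise, so that $\pi(\phi,\psi)$ is an (acyclic) fibration in $\IBiMod_\QOp$. Thus $\pi$ is right Quillen and \eqref{equ:IBiMod adj} is a Quillen adjunction. As in Section \ref{sec:bimodules} one moreover notes that $\iota$ is fully faithful with $\pi\iota\cong\mathit{id}$, exhibiting $\IBiMod_\QOp$ as a coreflective subcategory of the undercategory, whence Lemma \ref{lem:fully faithful} yields homotopical full faithfulness of $\iota$.

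The only genuinely non-formal ingredient is the identification of $\IBiMod_\QOp$ with a category of algebras over a colored operad, which is what licenses the appeal to \cite{BMColored}; I expect that to be the (mild) main point. Once the corresponding monad is recognised --- equivalently, once one knows that $U$ is monadic and preserves filtered colimits --- every remaining verification is a routine transcription of arguments already carried out for $\sSetTrip$, $\BiMod_{\POp,\QOp}$ and their slice categories, each model-categorical claim reducing through $U$ and through Theorem \ref{thm:slice model str} to the corresponding arity-wise statement about simplicial sets.
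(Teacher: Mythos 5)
Your proposal is correct and follows essentially the same route as the paper: the model structure is obtained by right transfer along the forgetful functor to $\sSetSeq$, with well-definedness supplied by \cite[Theorem 2.1]{BMColored} via the identification of infinitesimal $\QOp$-bimodules as algebras over a colored operad, and the Quillen property of \eqref{equ:IBiMod adj} follows because $\pi$ acts as the identity on underlying symmetric sequences (the paper's remark that $f_1^*\MOp(r)=\MOp(r)$), hence preserves fibrations and weak equivalences.
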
  
\begin{proof}
The model structure is obtained by right transfer along the forgetful functor 
\[
    \IBiMod_{\QOp} \to \sSetSeq.
\]
The well-definedness of this model structure follows again from \cite[Theorem 2.1]{BMColored}.
It is clear that the adjunction is Quillen since the right-adjoint clearly preserves weak equivalences and fibrations. 
(Note that on the level of simplicial sets $f_1^*\MOp(r) = \MOp(r)$.)
\end{proof}

Dually, we consider for a dg Hopf cooperad $\DOp$ the category $\dgHIBiModc_{\DOp}$ of infinitesimal $\DOp$ bicomodules.
Analogusly to \eqref{equ:IBiMod adj} It fits into an adjunction 
\begin{equation}\label{equ:IBiModc adj}
    \pi \colon (\dgHBiModc_{\AssMc, \DOp})_{/(\DOp_1,*)}  \rightleftarrows \dgHIBiModc_{\DOp} \colon \iota.
\end{equation}

\begin{prop}
    Let $\DOp$ be a dg Hopf cooperad. Then the category $\dgHIBiModc_{\DOp}$ of infinitesimal $\DOp$ bicomodules may be equipped with a model category structure by right transfer along the adjunction \eqref{equ:IBiModc adj}.
    The weak equivalences of this model category structure are the arity-wise quasi-isomorphisms.
\end{prop}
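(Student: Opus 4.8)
The plan is to apply the transfer Theorem~\ref{thm:transfer Kan} to the adjunction~\eqref{equ:IBiModc adj}, following the template of the proofs of Propositions~\ref{prop:Htrip model} and~\ref{prop:HBiModc model str}. Here the known (source) model category is the slice $\catC:=(\dgHBiModc_{\AssMc,\DOp})_{/(\DOp_1,*)}$, equipped via Theorem~\ref{thm:slice model str} with the slice structure built on $\dgHBiModc_{\AssMc,\DOp}$ (which exists by the colored bimodule case established earlier in this section), the left adjoint $F=\pi$ is corestriction of bicomodules, and the right adjoint $G=\iota$ adjoins the fixed coalgebra component $\DOp_1$. Right transfer along this adjunction then means that the weak equivalences and fibrations of $\dgHIBiModc_{\DOp}$ are the morphisms $f$ such that $\iota(f)$ is one in $\catC$, and the generating (acyclic) cofibrations are the images under $\pi$ of those of $\catC$.

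First I would establish the analogue of Lemma~\ref{lem:pre Htrip model}: a morphism $f$ of $\dgHIBiModc_{\DOp}$ is an arity-wise quasi-isomorphism if and only if $\iota(f)$ is a quasi-isomorphism in $\catC$. One direction is immediate since $\pi\iota=\mathit{id}$ and $\pi$, which projects onto the module component, preserves quasi-isomorphisms; for the converse, the arity components of $\iota f$ are assembled from those of $f$ together with the fixed object $\DOp_1$ by finite direct sums, tensor products, and coinvariants under finite group actions, all of which preserve quasi-isomorphisms. Combined with the definition of the transferred weak equivalences, this identifies them with the arity-wise quasi-isomorphisms, which is the last assertion of the proposition.

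For condition (1) of Theorem~\ref{thm:transfer Kan} the domains of the generating (acyclic) cofibrations of $\catC$ are of overall at most countable dimension (by Theorem~\ref{thm:slice model str} and the description of the generating cofibrations in Proposition~\ref{prop:HBiModc model str}); since $\AssMc$ is arity-wise finite dimensional, applying $\pi$ — which arity-wise involves only finite direct sums and coinvariants — preserves this bound, so the domains of $\pi(I)$ are countably small. For condition (2), the generating acyclic cofibrations of $\dgHIBiModc_{\DOp}$ are the images $\pi(J)$ of those of $\catC$; by Proposition~\ref{prop:cofib aritywise cofib}, carried through the transfers defining $\dgHBiModc_{\AssMc,\DOp}$ and its slice, these are arity-wise acyclic cofibrations of dg commutative algebras, and $\pi$ keeps them such. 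Since colimits in $\dgHIBiModc_{\DOp}$ are created arity-wise in dg commutative algebras, any relative $\pi(J)$-cell complex is arity-wise a transfinite composite of pushouts of acyclic cofibrations of dg commutative algebras, hence an arity-wise quasi-isomorphism, i.e.\ a weak equivalence; this gives condition (2). One could alternatively package condition (2) via Lemma~\ref{lem:cell we criterion 2}, taking $\alpha,\beta$ to be the arity-wise forgetful functors to a product of copies of dg commutative algebras.

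The step I expect to be the main obstacle is the bookkeeping around the functor $\pi$ and around colimits: one must verify that corestriction of bicomodules along the coalgebra structure of $\DOp_1$ genuinely acts arity-wise by finite direct sums and coinvariants (so that it preserves both the countable-dimension bound and arity-wise acyclic cofibrations of dg commutative algebras), and that colimits of infinitesimal $\DOp$ bicomodules are indeed computed arity-wise on the underlying dg commutative algebras. Once these structural facts are in place, the verification reduces to the routine pattern already carried out for triples and for bimodules above.
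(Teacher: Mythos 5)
Your proposal is correct and takes essentially the same approach as the paper: the paper's own proof consists of the single remark that the argument is parallel to that of Proposition \ref{prop:HBiModc model str}, and your write-up is a faithful unfolding of exactly that parallel (transfer via Theorem \ref{thm:transfer Kan}, the analogue of Lemma \ref{lem:pre Htrip model} identifying the weak equivalences, countable smallness for condition (1), and arity-wise acyclic cofibrations of dg commutative algebras together with arity-wise creation of colimits for condition (2)). The only simplification worth noting is that the corestriction $\pi$ is the identity on underlying symmetric sequences and $\iota$ merely adjoins the fixed component $\DOp_1$, so the ``bookkeeping'' you flag as the main obstacle is in fact immediate.
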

\begin{proof}
This is parallel to the proof of Proposition \ref{prop:HBiModc model str}.
\end{proof}

\begin{prop}\label{prop:ibimod rht adj}
    Let $\POp$ be a simplicial operad and let $\COp:=\Omega_\sharp(\POp)$.
    Then there is a Quillen adjunction 
    \[
    \DGOmega_\sharp \colon \IBiMod_{\POp} \leftrightarrows \left(\dgHIBiModc_{\COp}\right)^{op} \colon \G,
    \]
    where the right-adjoint $\G$ is the arity-wise application of the functor $\G$ of \eqref{equ:G def}.
\end{prop}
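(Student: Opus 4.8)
The plan is to imitate the two-step construction used for plain operadic bimodules in Proposition~\ref{prop:bimod rht adj}: first produce the rational homotopy Quillen adjunction on the category of $\AssM$-$\POp$-bimodules, then slice it at the canonical object $(\POp_1,\emptyset)$, and finally descend along the coreflective inclusions \eqref{equ:IBiMod adj} and \eqref{equ:IBiModc adj} that realize infinitesimal $\POp$-bimodules (resp. $\COp$-bicomodules) as (co)reflective subcategories of the sliced (bi)module categories.

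First one invokes Proposition~\ref{prop:colored mod rht adj}, second item, applied to the colored operads $\AssM$ (on the colors $\{I,II\}$) and $\POp$ (on the single color $\{*\}$). This yields a Quillen adjunction
\[
\DGOmega_\sharp\colon \BiMod_{\AssM,\POp}\leftrightarrows\bigl(\dgHBiModc_{\AssMc,\COp}\bigr)^{op}\colon\G,
\]
with $\AssMc=\DGOmega_\sharp(\AssM)$, $\COp=\DGOmega_\sharp(\POp)$, and $\G$ the arity-wise application of the realization functor. Slicing this adjunction at $(\POp_1,\emptyset)\in\BiMod_{\AssM,\POp}$ by Proposition~\ref{prop:slicing adj}, and observing that $\DGOmega_\sharp(\POp_1,\emptyset)=(\COp_1,*)$ so that the sliced target is exactly the category occurring in \eqref{equ:IBiModc adj}, one obtains a Quillen adjunction between $\BiMod_{\AssM,\POp}^{(\POp_1,\emptyset)/}$ and $\bigl((\dgHBiModc_{\AssMc,\COp})_{/(\COp_1,*)}\bigr)^{op}$. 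Next I would apply Lemma~\ref{lem:restr adj} to this sliced adjunction together with the coreflective inclusion $\iota\colon\IBiMod_\POp\hookrightarrow\BiMod_{\AssM,\POp}^{(\POp_1,\emptyset)/}$ of \eqref{equ:IBiMod adj} and, after passing to opposite categories, the inclusion associated to \eqref{equ:IBiModc adj}. The hypotheses are checked just as in Proposition~\ref{prop:bimod rht adj}: the adjunction \eqref{equ:IBiMod adj} is Quillen, since its right adjoint $\pi$ preserves arity-wise fibrations and weak equivalences; and the opposite of the inclusion \eqref{equ:IBiModc adj} creates (acyclic) cofibrations because the model structure on $\dgHIBiModc_\COp$ is defined by right transfer along $\pi$, so that a morphism there is a fibration (resp. acyclic fibration) precisely when its image under the inclusion $\iota$ is. Lemma~\ref{lem:restr adj} then produces the desired left Quillen functor $\DGOmega_\sharp\colon\IBiMod_\POp\to(\dgHIBiModc_\COp)^{op}$ with right adjoint $\pi\,\G\,\iota$; commuting the resulting square of right adjoints and applying $\iota$ on the right identifies this right adjoint with the arity-wise $\G$, exactly as in the proof of Proposition~\ref{prop:bimod rht adj}.

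The one point not covered verbatim by the bimodule argument, and where I expect the real work to lie, is the verification that $\DGOmega_\sharp$ genuinely restricts to the infinitesimal subcategories, i.e. that the left adjoint lifts in the sense required by Lemma~\ref{lem:restr adj}. Since $\DGOmega_\sharp$ on $\AssM$-$\POp$-bimodules is induced from the componentwise functor on triples (Propositions~\ref{prop:rht triples} and \ref{prop:bimod rht adj}), it sends a bimodule presented as a pair (algebra in right $\POp$-modules, module over it) to the pair of its two $\DGOmega_\sharp$-images; hence everything reduces to the single identification $\DGOmega_\sharp(\POp_1)=\COp_1=(\DGOmega_\sharp\POp)_1$. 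Granting it, $\DGOmega_\sharp$ carries the canonical bimodule $(\POp_1,\MOp)$ attached to an infinitesimal bimodule $\MOp$ to $(\COp_1,\DGOmega_\sharp(\MOp))$, which is the canonical bicomodule attached to the infinitesimal bicomodule $\DGOmega_\sharp(\MOp)$, and the special case $\MOp=\emptyset$ gives the identity $\DGOmega_\sharp(\POp_1,\emptyset)=(\COp_1,*)$ used above. Equivalently, one must check that $\DGOmega_\sharp$ is compatible with the arity-shift construction $(-)_1$. For the right adjoint this is immediate, $\G(\DOp_1)=(\G\DOp)_1$, since $\G$ is applied arity-wise and $(-)_1$ merely reindexes arities; for $\DGOmega_\sharp$ it should follow by unwinding the tree-wise construction of the functor, under which the passage from $\POp$ to $\POp_1$ corresponds to adjoining one extra leaf at the root. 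Once this compatibility is in hand, Lemma~\ref{lem:restr adj} applies and closes the proof; the $\La$-variant, if desired, would be treated identically using the $\La$ versions of Propositions~\ref{prop:colored mod rht adj}, \ref{prop:slicing adj} and Lemma~\ref{lem:restr adj}.
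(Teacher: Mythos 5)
Your proposal follows the paper's own proof essentially verbatim: the paper likewise obtains the adjunction by slicing the second Quillen adjunction of Proposition \ref{prop:colored mod rht adj} at $(\POp_1,\emptyset)$ and then restricting to the (co)reflective subcategories of \eqref{equ:IBiMod adj} and \eqref{equ:IBiModc adj} via Lemma \ref{lem:restr adj}, exactly as in Proposition \ref{prop:bimod rht adj}. The compatibility $\DGOmega_\sharp(\POp_1,\emptyset)=(\COp_1,*)$ that you single out as the one nontrivial point is indeed what the lifting hypothesis of Lemma \ref{lem:restr adj} requires, and the paper's two-sentence proof takes it for granted without comment, so your treatment is if anything the more careful one.
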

\begin{proof}
By slicing the Quillen adjunction of the second assertion of Proposition \ref{prop:colored mod rht adj} we obtain a Quillen adjuntion 
\[
    \DGOmega_\sharp\colon
    \BiMod_{\AssM, \POp}^{(\POp_1, \emptyset)/}
 \leftrightarrows
\left( (\dgHBiModc)_{/(\COp_1,*)} \right)^{op}
    \colon\G.
\]
We then apply Lemma \ref{lem:restr adj} to restrict this adjunction to the (co)reflective subcategories $\IBiMod_{\POp}$, respectively $\dgHIBiModc_{\COp}$, as in the proof of Proposition \ref{prop:bimod rht adj}.

% , applied to the Quillen adjunction of the slice categories 
    % \[
    %     \Omega_\sharp\colon
    %     \sSetTrip^{(\POp, \emptyset,\QOp)/}
    %  \leftrightarrows
    % \left( (\dgHTripc)_{/(\COp,*,\DOp)} \right)^{op}
    %     \colon\G.
    % \]

%This is a special case of the second assertion of Proposition \ref{prop:colored mod rht adj}.
\end{proof}

\appendix 

\section{Induction and restriction for bimodules}

\subsection{Induction and restriction}
Let $f_1: \POp\to \POp'$ and $f_2:\QOp\to \QOp'$ be morphisms of simplicial operads, and let $f: (\POp,\emptyset, \QOp)\to (\POp',\emptyset, \QOp')$ be the corresponding morphism in $\sSetTrip$.
Then by general facts about slice categories (see Proposition \ref{prop:slice base change} above) we have a Quillen adjunction between the under-categories 
\begin{equation}\label{equ:slice res ind 1}
	f_* \colon \sSetTrip^{(\POp,\emptyset, \QOp)/}
	 \leftrightarrows 
	 \sSetTrip^{(\POp',\emptyset, \QOp')/}
	 \colon f^*.
\end{equation}
Here the right-adjoint is the pre-composition with the morphism $f$. The left-adjoint takes $\alpha:(\POp,\emptyset, \QOp)\to (\AOp, \MOp, \BOp)$ to the pushout
\[
\begin{tikzcd}
	(\POp,\emptyset, \QOp)\ar{r}{\alpha} \ar{d}{f}&  (\AOp, \MOp, \BOp) \ar[dashed]{d}\\
	(\POp',\emptyset, \QOp') \ar[dashed]{r}{f_*\alpha} & X.
\end{tikzcd}.
\]

In the particular case that $\AOp=\POp$ and $\BOp=\QOp$ we have that 
\[
X = (\POp', f_*\MOp, \QOp'),	
\]
with $f_*\MOp$ the induced $\POp'$-$\QOp'$-bimodule.
Furthermore, by Proposition \ref{prop:slice base change} we have that \eqref{equ:slice res ind 1} is a Quillen equivalence if either $f$ is an acyclic cofibration or a weak equivalence between cofibrant objects. This in turn is equivalent to either $f_1$ and $f_2$ being acyclic cofibrations or 
weak equivalences between cofibrant operads.
%We may restrict the adjunction \eqref{equ:slice res ind 1} to the coreflective subcategories of triples using Propositions \ref{}.

% \begin{prop}
%     Let $f_1:\POp\to \POp'$ and $f_2:\QOp\to\QOp'$ be morphisms of simplicial operads, and let $f=(f_1,f_2)$.
%     Then there is a Quillen adjunction 
%     \begin{equation}\label{equ:triple res ind}
%         f_* \colon \sSetTrip^{(\POp,\emptyset, \QOp)/}
%          \leftrightarrows 
%          \sSetTrip^{(\POp',\emptyset, \QOp')/}
%          \colon f^*
%     \end{equation}
%     with the right-adjoint being restriction along $f$ and the left-adjoint induction. If $f_1$ and $f_2$ are weak equivalences and $\POp$, $\QOp$, $\POp$, $\QOp'$, are cofibrant then \eqref{equ:triple res ind} is a Quillen equivalence. The same holds if $f$ is an acyclic cofibration.
% \end{prop}

Our adjunction may be further restricted to bimodules.
In this setting the sufficient assumptions for Quillen equivalence may be relaxed, as shown independently by Fresse and Berger-Moerdijk

\begin{thm}[Fresse \cite{OperadModulesFunctors}, Berger-Moerdijk \cite{BMColored}]
	\label{thm:ind res bimod}
Let $f_1:\POp\to \POp'$ and $f_2:\QOp\to\QOp'$ be morphisms of simplicial operads, and let $f=(f_1,f_2)$.
Then there is a Quillen adjunction 
\begin{equation}\label{equ:bimod res ind}
	f_* \colon \BiMod_{\POp, \QOp}
	 \leftrightarrows 
	 \BiMod_{\POp, \QOp}
	 \colon f^*
\end{equation}
with the right-adjoint being restriction along $f$ and the left-adjoint induction. If $f_1$ and $f_2$ are weak equivalences and $\POp$, $\POp'$ are $\Sigma$-cofibrant then \eqref{equ:bimod res ind} is a Quillen equivalence.
\end{thm}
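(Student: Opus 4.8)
The plan is to dispatch the existence of the Quillen adjunction by a formal argument, and then to reduce the Quillen equivalence assertion to a statement about relative composition products that is the content of \cite{OperadModulesFunctors,BMColored}. For the first part: the right adjoint $f^*$ restricts the bimodule structure along $f_1,f_2$ and leaves the underlying simplicial symmetric sequence unchanged, so since by Proposition \ref{prop:model cat bimod} the weak equivalences and fibrations in $\BiMod_{\POp,\QOp}$ and $\BiMod_{\POp',\QOp'}$ are created arity-wise in $\sSet$, the functor $f^*$ creates --- hence preserves and reflects --- weak equivalences and fibrations. Its left adjoint $f_*$ exists by the adjoint functor theorem (the categories are locally presentable and $f^*$ preserves limits and filtered colimits) and is computed by the relative composition product $f_*\MOp = \POp'\circ_\POp\MOp\circ_\QOp\QOp'$. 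Hence $(f_*,f^*)$ is a Quillen adjunction, with no hypothesis on $f_1,f_2$.

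For the Quillen equivalence, since $f^*$ preserves and reflects all weak equivalences and $f_*$ sends cofibrant objects to cofibrant objects, the derived unit at a cofibrant bimodule $\MOp$ coincides up to weak equivalence with the plain unit $\MOp\to f^*f_*\MOp$, so it suffices to show the latter is a weak equivalence for every cofibrant $\MOp$. I would factor $f_* = (f_1)_*\circ (f_2)_*$ through $\BiMod_{\POp,\QOp'}$ and treat the two extensions separately. The observation that explains the asymmetric hypothesis is that, when $\POp$ is $\Sigma$-cofibrant, a cofibrant $\POp$-$\QOp$-bimodule restricts to a cofibrant (in particular $\Sigma$-cofibrant) right $\QOp$-module: a cofibrant bimodule is a retract of a cell complex built from free bimodule cells $\FreeOp_{\POp,\QOp}(i)$, and such a free bimodule is a free right $\QOp$-module on the symmetric sequence $\POp\circ(\text{cofibrant sequence})$, which is $\Sigma$-cofibrant precisely because $\POp$ is, while the $\QOp$-operations enter only through inductions, which are $\Sigma$-cofibrant unconditionally. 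Granting this, the right extension $(f_2)_*$ applied to a cofibrant $\MOp$ is weakly equivalent to $\MOp$ itself (a cofibrant right $\QOp$-module extended along the weak equivalence $f_2$), so its unit is a weak equivalence with no cofibrancy hypothesis on $\QOp,\QOp'$; and $(f_1)_*$ is the classical extension--restriction adjunction of left modules along a weak equivalence of $\Sigma$-cofibrant operads, which is a Quillen equivalence because the $\bbS_k$-quotients $\POp(k)\otimes_{\bbS_k}(-)$ occurring in the free bimodule cells turn the arity-wise weak equivalence $f_1$ into weak equivalences cell by cell. One then propagates the statement along pushouts and transfinite compositions of cells, using left properness of $\sSet$, and finally passes to retracts.

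The step I expect to be the real obstacle --- and the reason this result is cited from \cite{OperadModulesFunctors,BMColored} rather than reproved here --- is the homotopical control of the relative composition product on the free-bimodule cells: that $\Sigma$-cofibrancy of the outer operads is exactly what is needed for the tree summands of $\FreeOp_{\POp,\QOp}(X)\to\FreeOp_{\POp',\QOp'}(X)$ to be weak equivalences (the interaction of $\bbS_n$-coinvariants with weak equivalences), together with the bookkeeping required to carry this through the transfinite cell filtration of a general cofibrant bimodule. For that technical core I would invoke those references.
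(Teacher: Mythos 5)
The paper does not actually prove this theorem: it is stated as an imported result, with the proof deferred entirely to Fresse \cite{OperadModulesFunctors} and Berger--Moerdijk \cite{BMColored}. So there is no in-paper argument to compare yours against; the relevant question is whether your outline is consistent with those sources, and it is. Your formal part is fine: $f^*$ is the identity on underlying symmetric sequences, so it creates the (arity-wise) weak equivalences and fibrations of the transferred model structures, the adjunction is Quillen unconditionally, and since $f^*$ preserves and reflects all weak equivalences the Quillen-equivalence question reduces to the plain unit $\MOp\to f^*f_*\MOp$ on cofibrant $\MOp$, exactly as you say. Your factorization $f_*=(f_1)_*\circ(f_2)_*$ and your reading of the asymmetric hypothesis also match the structure of Fresse's treatment: the right-module leg $\MOp\circ_{\QOp}\QOp'$ only needs the operads to be cofibrant in the underlying category, which is automatic in $\sSet$, while the left-module leg $\POp'\circ_{\POp}(-)$ is the algebra-like direction where $\Sigma$-cofibrancy of $\POp,\POp'$ enters. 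One caveat on the step you treat as easy: the claim that a cofibrant $\POp$-$\QOp$-bimodule is cofibrant as a right $\QOp$-module is not immediate from "retract of a cell complex of free bimodules," because the left $\POp$-action is monadic and pushouts of bimodules are \emph{not} computed in right $\QOp$-modules; one needs the standard filtration of a pushout along a free-bimodule map to see that the underlying right-module map is still a cofibration. You do flag this under "bookkeeping ... through the transfinite cell filtration," and together with the homotopy invariance of the relative composition product it is precisely the technical core that both you and the paper delegate to the cited references. In short: no gap relative to what the paper itself establishes, and your identification of where the real work lies is accurate. (Incidentally, the displayed adjunction \eqref{equ:bimod res ind} in the paper has a typo --- the right-hand category should be $\BiMod_{\POp',\QOp'}$ --- which your proposal implicitly corrects.)
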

% \begin{proof}
% A proof might be in order...
% \end{proof}

\subsection{Corestriction and coinduction}
Let $f_1:\COp\to \COp'$ and $\f_2:\DOp\to \DOp'$ be morphisms of dg Hopf cooperads.
Then, dualizing the discussion in the last subsection, we obtain the following result from Proposition \ref{prop:slice base change}.
\begin{prop}
Let $f_1:\COp\to \COp'$, and $f_2:\DOp\to\DOp'$ be morphisms of dg Hopf cooperads, and let $f=(f_1,f_2)$. Then there is a Quillen adjunction of the over-categories 
\begin{equation}\label{equ:slice cores coind 1}
	f_* \colon \dgHTripc_{/(\COp,*, \DOp)}
	 \leftrightarrows 
	 \dgHTripc_{/(\COp',*, \DOp')}
	 \colon f^*.
\end{equation}
If $f_1$, $f_2$ are weak equivalences between fibrant objects, or acyclic fibrations, then \eqref{equ:slice cores coind 1} is a Quillen equivalence.
\end{prop}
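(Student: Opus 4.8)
The plan is to obtain both assertions as instances of the over-category base change, Proposition~\ref{prop:slice base change}, applied to the model category $\catC=\dgHTripc$. The first step is to observe that $f_1$ and $f_2$, together with the identity on the terminal bicomodule occupying the middle slot, assemble into a single morphism
\[
	f\colon (\COp,*,\DOp)\longrightarrow (\COp',*,\DOp')
\]
of $\dgHTripc$; this uses only that the terminal (``trivial'') bicomodule is functorial in the underlying pair of cooperads. Feeding $f$ into the second bullet of Proposition~\ref{prop:slice base change} then produces the Quillen adjunction \eqref{equ:slice cores coind 1} verbatim, with $f_*$ post-composition with $f$ and $f^*$ pullback along $f$.

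For the Quillen-equivalence clause, the same Proposition reduces us to checking that $f$ is a weak equivalence in $\dgHTripc$ together with one of: $f$ an acyclic fibration, or both $(\COp,*,\DOp)$ and $(\COp',*,\DOp')$ fibrant. The weak-equivalence condition is immediate: by Proposition~\ref{prop:Htrip model} the weak equivalences of $\dgHTripc$ are the quasi-isomorphisms, taken arity-wise in each of the three slots, so $f=(f_1,\mathit{id}_{*},f_2)$ is one exactly when $f_1$ and $f_2$ are.

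For the remaining two properties I would pass through $\iota$. Since the model structure on $\dgHTripc$ is defined by right transfer along $\iota$, a morphism is a(n acyclic) fibration, resp.\ an object is fibrant, precisely when its image under $\iota$ is so in $\CdgHOpc$, and $\iota(\COp,*,\DOp)$ is the colored dg Hopf cooperad whose arity components are built from those of $\COp$, $\DOp$ and $*$ by finite direct sums, tensor products, and (co)invariants under finite group actions. Running the same bookkeeping as in the proof of Lemma~\ref{lem:pre Htrip model}, these operations preserve surjective quasi-isomorphisms, so when $f_1,f_2$ are acyclic fibrations the morphism $\iota(f)$ is an arity-wise surjective quasi-isomorphism, hence an acyclic fibration in $\CdgHOpc$, whence $f$ is an acyclic fibration in $\dgHTripc$; and, analogously, when $\COp$ and $\DOp$ are fibrant, so is $(\COp,*,\DOp)$. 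Each of the two stated hypotheses on $(f_1,f_2)$ therefore places $f$ within the scope of Proposition~\ref{prop:slice base change}, and the Quillen equivalence follows. (Were $\dgHTripc$ known to be right proper, the first case would not even need the fibrancy discussion.)

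The step I expect to be the genuine obstacle is exactly the last one: that $(\COp,*,\DOp)$ — equivalently $\iota(\COp,*,\DOp)$ — is fibrant once $\COp$ and $\DOp$ are fibrant dg Hopf cooperads. In contrast with quasi-isomorphisms, fibrancy in $\CdgHOpc$ is not arity-local: fibrations there are the maps whose underlying cooperad morphism has the right-lifting property against the acyclic cofibrations of $\CdgOpc$, so one cannot simply invoke that tensor products and coinvariants ``preserve fibrations.'' I would resolve this either by verifying the lifting property for $\iota(\COp,*,\DOp)\to *$ against the generating acyclic cofibrations directly, exploiting that $*$ is terminal, or, more cleanly, by exhibiting $(\COp,\DOp)\mapsto(\COp,*,\DOp)$ as a right Quillen functor $\dgHOpc\times\dgHOpc\to\dgHTripc$ (right adjoint to forgetting the middle slot), which then preserves fibrant objects automatically. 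Everything else — the functoriality of the trivial bicomodule, the arity-wise description of $\iota$, and the entirely parallel treatment of the $\La$-variant in $\La\dgHTripc$ — is routine.
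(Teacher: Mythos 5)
Your proposal matches the paper's own argument, which consists precisely of invoking the over-category half of Proposition~\ref{prop:slice base change} for the morphism $f\colon(\COp,*,\DOp)\to(\COp',*,\DOp')$ in $\dgHTripc$ ("dualizing the discussion in the last subsection"); the paper gives no further detail. You go beyond the paper in actually checking that the hypotheses on $(f_1,f_2)$ translate into the hypotheses on $f$ required by that proposition --- in particular flagging that fibrancy of $(\COp,*,\DOp)$ in $\dgHTripc$ is not an arity-wise condition and needs the lifting-property or right-Quillen-functor argument you sketch --- which is a legitimate point the paper leaves implicit.
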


Again we may restrict further to the category of bicomodules.
Similarly to Theorem \ref{thm:ind res bimod} we may relax the assumptions necessary for a Quillen equivalence.

\begin{thm}
    Let $f_1:\COp\to \COp'$, and $f_2:\DOp\to\DOp'$ be morphisms of dg Hopf cooperads, and let $f=(f_1,f_2)$. Then the coinduction and corestriction functors induce a Quillen adjunction
    \begin{equation}\label{equ:slice cores coind 1}
        f_* \colon \dgHBiModc_{\COp, \DOp}
         \leftrightarrows 
         \dgHBiModc_{\COp',\DOp'}
         \colon f^*.
    \end{equation}
    If $f_1$ and $f_2$ are weak equivalences, then \eqref{equ:slice cores coind 1} is a Quillen equivalence.

    Furthermore, if $\COp, \COp', \DOp, \DOp'$ are dg $\La$ Hopf cooperads and the morphisms $f_1,f_2$ preserve the $\La$ structures, then the coinduction and corestriction functors form a Quillen adjunction 
    \begin{equation}\label{equ:slice cores coind 1 la}
        f_* \colon \La\dgHBiModc_{\COp, \DOp}
         \leftrightarrows 
         \La\dgHBiModc_{\COp',\DOp'}
         \colon f^*,
    \end{equation}
    which is a Quillen equivalence if $f_1$ and $f_2$ are weak equivalences.

    % The same assertions remain true if $\COp$, $\COp'$, $\DOp$, $\DOp'$ are $\La$ Hopf cooperads
\end{thm}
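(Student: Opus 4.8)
The plan is to deduce the statement from the corresponding over-category result for dg Hopf triples by restricting to the reflective subcategories of bicomodules, in close parallel to the proof of Proposition~\ref{prop:bimod rht adj}. Write $f=(f_1,*,f_2)\colon(\COp,*,\DOp)\to(\COp',*,\DOp')$ for the induced morphism of dg Hopf triples, and recall from \eqref{equ:Htrip over adj} that $\dgHBiModc_{\COp,\DOp}$ is a reflective subcategory of $\dgHTripc_{/(\COp,*,\DOp)}$, with inclusion $\iota_B$ and reflection $\pi_B$. First I would observe that the right adjoint $f^*$ of the triple-level over-category adjunction from the preceding proposition (pullback along $f$) restricts to the bicomodule subcategories: the pullback of $\iota_B(\NOp')=\bigl[(\COp',\NOp',\DOp')\to(\COp',*,\DOp')\bigr]$ along $f$ is a triple with cooperad components $\COp$ and $\DOp$ and, by the corestriction--coinduction adjunction, middle component the coinduced bicomodule, so that $f^*\iota_B=\iota_B\circ(\text{coinduction})$. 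The evident dual of Lemma~\ref{lem:restr adj} (equivalently, Lemma~\ref{lem:restr adj} applied to the opposite categories, exactly as in the proof of Proposition~\ref{prop:bimod rht adj}) then produces the left adjoint $f_*$, obtained by applying $\iota_B$, then the triple-level $f_*$, then $\pi_B$; one identifies it with corestriction of bicomodules, and checks that $(f_*,f^*)$ is a Quillen adjunction using that the model structure on $\dgHBiModc_{\COp,\DOp}$ is right-transferred along $\iota_B$ and that $f^*$ is right Quillen on triples. The $\La$-variant \eqref{equ:slice cores coind 1 la} follows by the identical argument applied to the $\La$-decorated categories, with Proposition~\ref{prop:cofib aritywise cofib La} in place of Proposition~\ref{prop:cofib aritywise cofib}.

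For the Quillen equivalence, assume $f_1,f_2$ are weak equivalences; note this does not follow formally from the triple-level statement, which demands fibrancy of the base. The key observation is that the left adjoint $f_*$, being corestriction, acts as the identity on the underlying colored symmetric sequences; since the weak equivalences of both $\dgHBiModc_{\COp,\DOp}$ and $\dgHBiModc_{\COp',\DOp'}$ are detected arity-wise on underlying dg vector spaces, $f_*$ preserves and reflects all weak equivalences. Invoking the criterion that a Quillen adjunction both of whose functors preserve all weak equivalences and whose unit and counit are weak equivalences is a Quillen equivalence, it remains to show that, under our hypothesis, $f^*$ (coinduction) likewise preserves all quasi-isomorphisms, and that the unit $\MOp\to f^*f_*\MOp$ and counit $f_*f^*\NOp'\to\NOp'$ of the corestriction--coinduction adjunction are quasi-isomorphisms.

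All these claims I would reduce to the case of cofree bicomodules. Every dg Hopf bicomodule is a reflexive equalizer of cofree bicomodules $\FreeOp^c_{\COp,\DOp}(\AOp)$, this equalizer is preserved by the forgetful functor to symmetric sequences (the underlying cobar diagram being split), and $f^*$ carries $\FreeOp^c_{\COp',\DOp'}(\AOp)$ to $\FreeOp^c_{\COp,\DOp}(\AOp)$, both being the right adjoint of the forgetful functor applied to $\AOp$. The underlying symmetric sequence of a cofree bicomodule is, in each arity, a \emph{finite} direct sum of tensor products of copies of $\COp$, $\DOp$ and the generating sequence -- finiteness because all our sequences vanish in arity zero -- so weak equivalences $f_1,f_2$ induce quasi-isomorphisms on cofree bicomodules, and the unit and counit of the corestriction--coinduction adjunction are manifestly quasi-isomorphisms on cofree objects. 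One uses here crucially that over $\QQ$ every symmetric-group representation is projective, so that no $\Sigma$-cofibrancy hypothesis on the cooperads is required: this is precisely what permits dropping the extra assumptions present in Theorem~\ref{thm:ind res bimod} in the present dual setting. I expect the main obstacle to be the passage from the cofree case to the general case, i.e.\ verifying that the specific reflexive-equalizer presentation of a bicomodule by its cofree resolution is homotopically robust, so that applying $f^*$, $f_*$ and the (co)unit transformations to it preserves quasi-isomorphisms; I would handle this exactly as in \cite{FWSimplicial} (cf.\ the simplicial-framing discussion recalled in Sections~\ref{sec:enrichment} and~\ref{sec:enrichment trip}). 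The same argument, applied to the $\La$-decorated categories, establishes the $\La$-variant \eqref{equ:slice cores coind 1 la}.
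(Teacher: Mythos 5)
Your treatment of the Quillen adjunctions (plain and $\La$ versions) follows the paper's route exactly: restrict the over-category adjunction on triples along $(\iota_B,\pi_B)$ via Lemma \ref{lem:restr adj}, identify $\pi_B f_*\iota_B$ with corestriction, and read off the Quillen property; that part is fine. The Quillen-equivalence argument, however, has a genuine gap, and it sits precisely where you flag ``the main obstacle''. You reduce everything to cofree bicomodules and then want to pass to a general $\NOp'$ through its reflexive-equalizer presentation by cofree objects. But an equalizer is a limit --- concretely the kernel of a difference map --- and kernels do not preserve quasi-isomorphisms: a map of equalizer diagrams that is a quasi-isomorphism on the two cofree terms need not induce a quasi-isomorphism on the equalizers unless the diagrams are homotopy equalizers (split, or with surjective difference map). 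The cotriple presentation of $\NOp'$ is split only after forgetting down to symmetric sequences, and applying the right adjoint $f^*$ (coinduction) destroys that splitting, so $U(f^*\NOp')$ is the equalizer of a non-split diagram whose cohomology is not computed termwise. The appeal to \cite{FWSimplicial} does not supply the missing step: there the cotensor $(-)^K$ is controlled through fibrations and the pullback-corner property, not through a claim that quasi-isomorphisms pass through these equalizers. The same problem undermines your subsidiary claim that $f^*$ preserves all quasi-isomorphisms, which the criterion you invoke also requires; note moreover that this criterion is stronger than necessary, since $f_*$ creates weak equivalences and one therefore only needs the counit on fibrant objects.

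The paper resolves exactly this difficulty by a different reduction: after forgetting the Hopf structure (the adjunction being computed identically on plain dg bicomodules, and fibrancy being preserved), it suffices to treat fibrant $\NOp$, and every object is weakly equivalent to a bar-cobar construction $\Bar\Bar^c X$. On such quasi-free objects the counit is the explicit map $\COp\circ\Bar^c X\circ\DOp\to\COp'\circ\Bar^c X\circ\DOp'$, and a first-quadrant spectral sequence for the filtration by the cohomological degree of the middle piece $\Bar^c X$, together with the K\"unneth theorem and the hypothesis on $f_1,f_2$, shows it is a quasi-isomorphism. Some replacement of this kind for your equalizer step is needed before your proof closes.
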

\begin{proof}

We first show that \eqref{equ:slice cores coind 1} is a Quillen adjunction. There are multiple ways. But the one that is in the style of the arguments in this paper is to consider the following diagram,
\[
    \begin{tikzcd}
        \dgHBiModc_{\COp, \DOp} 
    \ar[shift left]{d}{\iota_{B}}
    & \dgHBiModc_{\COp',\DOp'} 
    \ar[shift left]{l}{f^*} 
    \ar[shift left]{d}{\iota_{B}}
    \\
    \dgHTripc_{/(\COp,\emptyset, \DOp)}
    \ar[shift left]{r}{f_*} 
    \ar[shift left]{u}{\pi_B}
    & 
    \dgHTripc_{/(\COp',\emptyset, \DOp')}
    \ar[shift left]{u}{\pi_B}
    \ar[shift left]{l}{f^*}
    \end{tikzcd},
\]
with the upper horizontal $f^*$ the coinduction along $f$. By construction we have $\iota_B f^*=f^*\iota_B$. Applying (the dual of) Lemma \ref{lem:restr adj} we conclude that $\pi_Bf_*\iota_B$ is the left adjoint to $f^*$. This functor is the corestriction functor, and we denote it by $f_*$ as well. Furthermore, the second assertion of Lemma \ref{lem:restr adj} states that eqref{equ:slice cores coind 1} is a Quillen adjunction. 

Now suppose that $f$ is a weak equivalence, i.e., $f_1$ and $f_2$ are quasi-isomorphisms of dg Hopf cooperads.
Note that the left adjoint $f_*$ (corestriction) is the identity on the level of symmetric sequences of dg commutative algebras and hence $f_*$ creates weak equivalences.
To show that \eqref{equ:slice cores coind 1} is a Quillen equivalence, it hence sufffices to check that for every fibrant $\COp'$-$\DOp'$-cobimodule $\MOp$ the adjunction counit 
\[
f_* f^*\MOp \to \MOp  
\]
is a weak equivalence.
Next, note that \eqref{equ:slice cores coind 1} fits into a commutative diagram 
\[
\begin{tikzcd}
	\dgHBiModc_{\COp, \DOp} \ar[shift left]{r}{f_*}\ar{d} & 
	\dgHBiModc_{\COp', \DOp'} \ar[shift left]{l}{f^*}\ar{d}
	\\
	\dgBiModc_{\COp, \DOp} \ar[shift left]{r}{f_*}& 
	\dgBiModc_{\COp', \DOp'}\ar[shift left]{l}{f^*}
\end{tikzcd},
\]
with the vertical arrows the natural forgetful functors from Hopf bicomodules to plain dg bicomodules.
In particular, note that the adjoint functors in the top and bottom row are identical, since limits in Hopf bicomodules are created in plain dg bicomodules.
Furthermore, the vertical forgetful functors preserve (in fact create) fibrations, and our $\MOp$ above is also fibrant in $\dgBiModc_{\COp', \DOp'}$.
Hence it suffices to check that for any fibrant object $\NOp$ of $\dgBiModc_{\COp', \DOp'}$ we have that the adjunction counit
\begin{equation}\label{equ:N counit}
	f_* f^*\NOp \to \NOp  
\end{equation}
is a weak equivalence. In fact, we do not need to show this for all such $\NOp$, but only $\NOp=\Bar\Bar^c X$ that are bar-cobar-constructions, since any object of $\dgBiModc_{\COp', \DOp'}$ is weakly equivalent to one such.
But for bar cobar constructions, the morphism \eqref{equ:N counit} may be explicitly written down and reads 
\[
	f_* f^*\Bar\Bar^c X
	=
	\COp \circ (\Bar^c\COp') \circ X \circ (\Bar^c\DOp')\circ \DOp 
	\xrightarrow{f_1\circ \mathit{id} \circ  \mathit{id} \circ  \mathit{id} \circ f_2}
	\COp' \circ (\Bar^c\COp') \circ X \circ (\Bar^c\DOp')\circ \DOp'= \Bar\Bar^c X.
\]
The differential is composed of the internal differentials on the 4 cooperads and $X$, and pieces using the cooperadic cocompositions and the coaction.
The total cohomological degree of some element is the sum of the cohomological degrees of the leading and trailing factors $\COp$ and $\DOp$ (resp. $\COp'$ and $\DOp'$) and of the middle piece  
\[
\Bar^c X:= (\Bar^c\COp') \circ X \circ (\Bar^c\DOp').
\]
Both degrees are non-negative (using that we work with non-negatively graded cochain complexes throughout), and we obtain a first-quadrant spectral sequence from the filtration by the total cohomological degree on $\Bar^c X$.
The associated graded morphism to our counit is, displaying the differentials explicitly
\[
	(\COp,d_{\COp}) \circ (\Bar^c X,0)\circ (\DOp,d_{\DOp}) 
	\xrightarrow{f_1\circ \mathit{id} \circ f_2}
	(\COp',d_{\COp'}) \circ (\Bar^c X,0)\circ (\DOp',d_{\DOp'}).
\] 
By the K\"unneth Theorem and the assumption that $f_1$, $f_2$ are weak equivalences this is a weak equivalence.  
Hence by the spectral sequence comparison theorem our counit morphism is a quasi-isomorphism as well as desired.

The proof for $\La$ cooperads is identical. Note that still the limits are generated in the underlying categories of plain dg cobimodules, and the bar-cobar resolution has a natural $\La$-structure.
\end{proof}

\bibliographystyle{plain}
\bibliography{MappingSpaceModel}

\begin{thebibliography}{10}

\bibitem{AroneTurchin}
Gregory Arone and Victor Turchin.
\newblock On the rational homology of high-dimensional analogues of spaces of long knots.
\newblock {\em Geom. Topol.}, 18(3):1261--1322, 2014.

\bibitem{BMColored}
Clemens Berger and Ieke Moerdijk.
\newblock Resolution of coloured operads and rectification of homotopy algebras.
\newblock In {\em Categories in algebra, geometry and mathematical physics}, volume 431 of {\em Contemp. Math.}, pages 31--58. Amer. Math. Soc., Providence, RI, 2007.

\bibitem{Borceux}
Francis Borceux.
\newblock {\em Handbook of Categorical Algebra}.
\newblock Encyclopedia of Mathematics and its Applications. Cambridge University Press, 1994.

\bibitem{OperadModulesFunctors}
Benoit Fresse.
\newblock {\em Modules over operads and functors}, volume 1967 of {\em Lecture Notes in Mathematics}.
\newblock Springer-Verlag, Berlin, 2009.

\bibitem{OperadHomotopyBook}
Benoit Fresse.
\newblock {\em Homotopy of operads and {G}rothendieck-{T}eichm\"uller groups}, volume 217 of {\em Mathematical Surveys and Monographs}.
\newblock American Mathematical Society, Providence, RI, 2017.

\bibitem{ExtendedRHT}
Benoit Fresse.
\newblock The extended rational homotopy theory of operads.
\newblock {\em Georgian Math. J.}, 25(4):493--512, 2018.

\bibitem{FWSimplicial}
Benoit {Fresse} and Thomas {Willwacher}.
\newblock {Mapping Spaces for DG Hopf Cooperads and Homotopy Automorphisms of the Rationalization of $E_n$-operads}, 2020.
\newblock {Preprint arXiv:2003.02939}.

\bibitem{HeutsMoerdijk}
Gijs Heuts and Ieke Moerdijk.
\newblock {\em Simplicial and dendroidal homotopy theory}, volume~75 of {\em Ergebnisse der Mathematik und ihrer Grenzgebiete. 3. Folge. A Series of Modern Surveys in Mathematics [Results in Mathematics and Related Areas. 3rd Series. A Series of Modern Surveys in Mathematics]}.
\newblock Springer, Cham, [2022] \copyright 2022.

\bibitem{HirschhornBook}
Philip~S. Hirschhorn.
\newblock {\em Model categories and their localizations}, volume~99 of {\em Mathematical Surveys and Monographs}.
\newblock American Mathematical Society, Providence, RI, 2003.

\bibitem{HirschhornSlice}
Philip~S. Hirschhorn.
\newblock Overcategories and undercategories of model categories, 2015.

\bibitem{Li}
Zhiwei Li.
\newblock A note on model (co)slice categories.
\newblock {\em Chinese Ann. Math. Ser. B}, 37(1):95--102, 2016.

\bibitem{nlab_slice_enriched}
{nLab authors}.
\newblock Enriched slice category.
\newblock \url{https://ncatlab.org/nlab/show/enriched+slice+category}, 2024.
\newblock \href{https://ncatlab.org/nlab/revision/enriched+slice+category/4}{revision 4}.

\bibitem{nlab_slice}
{nLab authors}.
\newblock Slice model structure.
\newblock \url{https://ncatlab.org/nlab/show/slice+model+structure}, 2024.
\newblock \href{https://ncatlab.org/nlab/revision/slice+model+structure/42}{revision 42}.

\end{thebibliography}

\end{document}